\title{Symmetric products of a semistable degeneration of surfaces}
\author{Yasunari Nagai}
\address{
Department of Mathematics, Faculty of Science and Engineering,
Waseda University,
3-4-1 Ohkubo, Shinjuku, Tokyo 169-8555, Japan}
\email{nagai.y@waseda.jp}
\theoremstyle{plain}
\newtheorem{theorem}[subsection]{Theorem}
\newtheorem{lemma}[subsection]{Lemma}
\newtheorem{proposition}[subsection]{Proposition}
\newtheorem*{theorem*}{Theorem}
\newtheorem*{corollary*}{Corollary}
\theoremstyle{definition}
\theoremstyle{remark}
\newtheorem*{claim}{Claim}
\newtheorem{remark}[subsubsection]{Remark}
\DeclareSymbolFont{cmletters}{OML}{cmm}{m}{it}
\DeclareSymbolFont{cmsymbols}{OMS}{cmsy}{m}{n}
\DeclareSymbolFont{cmlargesymbols}{OMX}{cmex}{m}{n}
\DeclareMathSymbol{\myjmath}{\mathord}{cmletters}{"7C}
\DeclareMathSymbol{\myamalg}{\mathbin}{cmsymbols}{"71}
\DeclareMathSymbol{\mycoprod}{\mathop}{cmlargesymbols}{"60}
\let\jmath\myjmath
\let\amalg\myamalg
\let\coprod\mycoprod
\def\lto{\longrightarrow}
\DeclareMathOperator{\codim}{codim}
\DeclareMathOperator{\Hilb}{Hilb}
\DeclareMathOperator{\Sym}{Sym}
\DeclareMathOperator{\Spec}{Spec}
\DeclareMathOperator{\adj}{adj}
\DeclareMathOperator{\id}{id}
\DeclareMathOperator{\Hom}{Hom}
\DeclareMathOperator{\Ker}{Ker}
\DeclareMathOperator{\Stab}{Stab}
\DeclareMathOperator{\Conj}{Conj}
\DeclareMathOperator{\age}{age}
\let\div\relax
\DeclareMathOperator{\div}{div}
\def\git{/\!\!/}
\newcommand\numberthis{\addtocounter{equation}{1}\tag{\theequation}}
\def\@seccntformat#1{%
  \protect\textup{\protect\@secnumfont
    \ifnum\pdfstrcmp{subsection}{#1}=0 \bfseries\fi% subsection # in \bfseries
    \csname the#1\endcsname
    \protect\@secnumpunct
  }%
}
\begin{document}

\baselineskip 17pt
\parskip 5pt

\maketitle
\vspace{-1cm}

\begin{abstract}
We explicitly construct a $V$-normal crossing Gorenstein canonical model of
the relative symmetric products of a local semistable degeneration of surfaces without a triple point
by means of toric geometry.
Using this model, we calculate the stringy $E$-polynomial of the relative symmetric product.
We also construct a minimal model of degeneration of Hilbert schemes explicitly.
\end{abstract}

%%%%%%%%%%%
\section*{Introduction}

Let $S$ be a smooth (quasi-)projective algebraic surface. A theorem of Fogarty \cite{F} says that
the Hilbert scheme $\Hilb ^n(S)$ of 0-dimensional subschemes on $S$ of length $n$ is a smooth (quasi-)projective
algebraic variety of dimension $2n$. This construction gives a very nice and interesting
way to produce higher dimensional
algebraic varieties. For example, if $S$ is a K3 surface (resp. an abelian surface),
then $\Hilb ^n(S)$ (resp. the albanese
fiber of $\Hilb ^{n+1}(S)$) gives an example of higher dimensional
irreducible symplectic compact K\"ahler manifold \cite{Be}.
Besides the holomorphic symplectic geometry, the Hilbert scheme of points on a surface is related to many branches of
mathematics, such as differential geometry, singularity theory, and representation theory.

As Hilbert schemes behave nicely in family, it is quite natural to think
of the relative Hilbert scheme $\Hilb ^n(\mathcal S/B)$
for a flat family of surfaces $\pi: \mathcal S\to B$.
If the family $\pi$ degenerates at some point $b\in B$, one naturally expects
that the family of Hilbert schemes $\Hilb ^n(\mathcal S/B)\to B$ also degenerates at $b$.
In this setting, one of the fundamental questions is to ask how much singular the induced degeneration
of Hilbert schemes is. Of course, it will depend on
the singularity of the degeneration of the original family. To get a modest degeneration of Hilbert schemes,
it is natural to assume that the family of surfaces $\pi :\mathcal S\to B$ is {\it semistable}.
In such a situation, another natural question is to find a \emph{good} birational model of
a degeneration of Hilbert schemes that is semistable (or very near to semistable) and
minimal \cite{N,GHH}, and to understand the behavior of the family.

However, even if the family $\pi:\mathcal S\to B$ is a semistable degeneration,
and hence $\mathcal S$ is smooth, it seems difficult, at least to the author,
to study the relative Hilbert scheme $\Hilb ^n(\mathcal S/B)$ directly by the
ring theoretic approach as in \cite{F} or \cite{Br},
in contrast to Ran's work \cite{R} on the case of semistable degeneration
of curves in this direction.
In fact, our relative Hilbert scheme
can be seen as a closed subscheme $\Hilb ^n(\mathcal S/B)\subset \Hilb ^n(\mathcal S)$,
while $\Hilb ^n(\mathcal S)$ can be very singular
for $\dim \mathcal S\geqslant 3$ and $n$ large.

In this article, we will focus on the {\it relative symmetric product} $\Sym ^n(\mathcal S/B)$
rather than the relative Hilbert scheme, and study its singularity and birational geometry.
For that purpose, we start from a local model of semistable degeneration of surfaces
$\mathcal S\to B$ and describe the symmetric product $\Sym ^n(\mathcal S/B)$ as a quotient
of certain affine toric variety by an action of the symmetric group (\S 1).
This description leads us to a Gorenstein canonical model with only quotient singularities
for a degeneration without a triple point (\S 2, Theorem \ref{gorenstein canonical model}).
The Gorenstein canonical model is obtained as a quotient by a natural action
of the symmetric group of the total space of a rank two toric vector bundle over a toric
variety associated with the Coxeter complex of a root system of type A (Proposition \ref{description coxeter}).
It is noteworthy that such a toric variety was studied by several authors
from an interest of combinatorics and representation of symmetric groups \cite{P,D,S}.
This toric-quotient description also enables us to calculate
the stringy $E$-polynomial of the Gorenstein canonical model, which
encodes cohomological information of the degeneration (\S 3, Theorem \ref{formula for stringy E-polynomial},
Proposition \ref{stringy E-poly for small n}).
In the last section,
we discuss an explicit construction of a $\mathbb Q$-factorial terminal minimal model of
the relative symmetric product (Theorem \ref{minimal model}),
which turns out to be a $V$-normal crossing degeneration of Hilbert schemes of general fibers.
This gives a good birational model of a degeneration of Hilbert schemes in the case where
the singular fiber of the original semistable degeneration of surfaces has no triple point.
We also relate our minimal model to the relative Hilbert scheme explicitly in the case
where the length $n=2$.

\paragraph{\bf Acknowledgement}
The author would like to thank M. Brion, M. Lehn, and T. Yasuda for their interest
and helpful comments. He also thanks the anonymous referee for giving him
valuable comments.
This work is partially supported by
JSPS Grants-in-aid for young scientists (B) 26800025.

\section{Local description of symmetric product}

\subsection{}\label{setting}
Let $S_2=S_3=\mathbb C^3$ and define $p _d:S_d\to B=\mathbb C$ by
\[
p _d(x_1,x_2,x_3)=
\begin{cases}
x_1x_2 & \mbox{if } d=2\, ,\\
x_1x_2x_3 & \mbox{if } d=3\, .
\end{cases}
\]
The origin of $S_2$ is the local model at the general point of the singular locus
of the singular fiber of semistable degeneration of surfaces,
while the origin of $S_3$ is the maximally degenerate point.
Let us denote the $n$-fold self-products of $S_d$ relative to $p _d$ by
\[
\widetilde X^{(n)}_d=
(S_d/B)^n = S_d \times _B S_d \times _B \cdots \times _B S_d.
\]
and let $\tilde \pi ^{(n)}_d: \widetilde X^{(n)}_d\to B$ be the natural morphism.
Then, the symmetric group $\mathfrak S_n$ acts on $\widetilde X^{(n)}_d$
and $\tilde \pi ^{(n)}_d$ is $\mathfrak S_n$-invariant. The quotient variety
\[
\pi ^{(n)}_d: X^{(n)}_d=\widetilde X^{(n)}_d/\mathfrak S_n \to B
\]
is the $n$-th \emph{relative symmetric product} of $p_d:S_d\to B$.

The fiber $(\pi ^{(n)}_d)^{-1}(b)$ for $b\in B,\, b\neq 0$ is just
$\Sym ^n(\mathbb C^*\times \mathbb C)$ for $d=2$ and $\Sym ^n((\mathbb C^*)^2)$ for $d=3$.
It is also easy to see the combinatorics of the fiber over the origin.
It has $\binom{n+d-1}{d}$ components each of which corresponds to a partition of $n$,
\[
\mathbf a=(a_i\; |\; 1\leqslant i\leqslant d,\; \sum a_i = n).
\]
The component $X^{(n)}_{\mathbf a}$ is an image of a birational morphism
\[
\Sym ^{\mathbf a}(\mathbb C^2) = \prod _{i=1}^d \Sym ^{a_i}(\mathbb C^2)
\to X^{(n)}_{\mathbf a}.
\]
The morphism has finite fibers over the double curves except
for the case $a_i=n$ for some $i$. It implies that $X^{(n)}_{\mathbf a}$ is
\emph{non-normal} for general $\mathbf a$. The geometry of the intersection of
these components seems difficult to describe directly.

In contrast, it is easy to describe the product variety
$\widetilde X^{(n)}_d$ by affine equations.
Let $(z_{11},z_{12},z_{13},\cdots, z_{n1},z_{n2},z_{n3})$ be
the coordinate of $(\mathbb C^3)^n=\mathbb C^{3n}$. Then, $\widetilde X^{(n)}_d$
is nothing but the complete intersection
\begin{equation}\label{generalized nc}
z_{11}\cdots z_{1d}=z_{21}\cdots z_{2d}=\cdots =z_{n1}\cdots z_{nd}\quad (d=2,3)
\end{equation}
of dimension $2n+1$ and the projection $\tilde \pi ^{(n)}_d: \widetilde X^{(n)}_d\to B=\mathbb C$ is
the function defined by the value of these monomials. If $d=2$, the variety
split into a product of the closed subvariety $\widetilde X^{(n) \prime}_2\subset
\mathbb C^{2n}$ defined by the same equation \eqref{generalized nc} and $\mathbb C^n$.

\begin{proposition}\label{Q-Gorenstein}
$X^{(n)}_d$ is $\mathbb Q$-Gorenstein, \emph{i.e.}, the canonical divisor of $X^{(n)}_d$ is
$\mathbb Q$-Cartier.
\end{proposition}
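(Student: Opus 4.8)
The plan is to play off two facts against each other: that $\widetilde X^{(n)}_d$ is a complete intersection, and that the quotient map $q\colon \widetilde X^{(n)}_d\to X^{(n)}_d$ is unramified away from a set of codimension two. First I would note that $\widetilde X^{(n)}_d$ is cut out in $\mathbb C^{3n}$ by the $n-1$ equations \eqref{generalized nc}, and since $\dim \widetilde X^{(n)}_d=2n+1=3n-(n-1)$, this is a genuine complete intersection. Hence $\widetilde X^{(n)}_d$ is Cohen--Macaulay and Gorenstein, so its canonical sheaf $\omega_{\widetilde X^{(n)}_d}$ is invertible (indeed adjunction trivialises it, a global generator being the Poincar\'e residue of $dz_{11}\wedge\cdots\wedge dz_{n3}$ along the defining equations). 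To pass to the quotient I also need $\widetilde X^{(n)}_d$ to be normal; being Cohen--Macaulay it suffices to verify the $R_1$ condition, i.e. that $\Sing \widetilde X^{(n)}_d$ has codimension at least two. This is a direct Jacobian computation with \eqref{generalized nc}: the singular locus is where, for every index $i$, at least two of $z_{i1},\dots,z_{id}$ vanish, and this has codimension $\geqslant 2$. Consequently $X^{(n)}_d$, being the quotient of a normal variety by a finite group, is normal.

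Next I would analyse the ramification of $q$. The non-free locus of the $\mathfrak S_n$-action is the union of the fixed loci of the nontrivial $\sigma\in\mathfrak S_n$, and a point is fixed by $\sigma$ only if at least two of its $n$ factors in $S_d$ coincide. The locus where the $i$-th and $j$-th factors coincide is isomorphic to $\widetilde X^{(n-1)}_d$, hence has dimension $2(n-1)+1=2n-1$ and codimension two in $\widetilde X^{(n)}_d$; longer cycles cut out loci of even higher codimension. Therefore the action is free in codimension one, so $q$ is \'etale in codimension one and carries every prime divisor of $\widetilde X^{(n)}_d$ to a prime divisor of $X^{(n)}_d$ without ramification.

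Finally I would descend the canonical class. Since $\mathfrak S_n$ acts by automorphisms, $\omega_{\widetilde X^{(n)}_d}$ carries a canonical $\mathfrak S_n$-linearisation; the stabiliser of any fixed point is finite and acts on the corresponding fibre through a character of finite order, so by the descent lemma for linearised invertible sheaves a suitable power $\omega_{\widetilde X^{(n)}_d}^{\otimes N}$ descends to an invertible sheaf on $X^{(n)}_d$. Because $q$ is \'etale in codimension one, that descended sheaf agrees with the reflexive sheaf $\omega_{X^{(n)}_d}^{[N]}$ on the big open set where $q$ is \'etale, and two reflexive sheaves on the normal variety $X^{(n)}_d$ that agree in codimension one are isomorphic; hence $\omega_{X^{(n)}_d}^{[N]}$ is invertible, i.e. $N K_{X^{(n)}_d}$ is Cartier, which is the assertion. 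In fact one can pin down $N$: on the trivialising generator above, the linearisation is given by a one-dimensional character of $\mathfrak S_n$, necessarily trivial or the sign character, so $\omega_{\widetilde X^{(n)}_d}^{\otimes 2}$ is equivariantly trivial and $X^{(n)}_d$ is $\mathbb Q$-Gorenstein of index dividing two.

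The hard part is the codimension bookkeeping rather than the descent. One must confirm, uniformly in $n$, that both $\Sing\widetilde X^{(n)}_d$ and the whole non-free locus really lie in codimension $\geqslant 2$, since it is precisely the two-dimensionality of the fibres of $p_d$ that forces the diagonals to have codimension two and thereby removes any ramification in codimension one; without this the naive descent would fail and $K_{X^{(n)}_d}$ need not be $\mathbb Q$-Cartier.
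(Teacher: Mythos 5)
Your argument is correct and is essentially the paper's: the paper likewise observes that $\widetilde X^{(n)}_d$ is a complete intersection (hence Gorenstein) and that the locus of points with nontrivial stabilizer, being a union of copies of $\widetilde X^{(n-1)}_d$, has codimension two, so the quotient map is \'etale in codimension one; it then simply cites Proposition 5.20 of \cite{KM} for the descent of $\mathbb Q$-Cartierness, where you prove that descent by hand via the linearisation of $\omega_{\widetilde X^{(n)}_d}$ and reflexive sheaves agreeing in codimension one. One small inaccuracy: the singular locus of $\widetilde X^{(n)}_d$ is not the locus where \emph{every} index $i$ has two of $z_{i1},\dots,z_{id}$ vanishing, but the (larger) locus where \emph{at least two} indices $i$ do --- the fibre product is singular exactly where at least two factors sit at critical points of $p_d$; this larger locus still has codimension $\geqslant 2$ (in fact $3$), so your normality conclusion and the rest of the proof are unaffected.
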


\begin{proof}
The locus $F$ of points with non-trivial stabilizers with respect to the action of $\mathfrak S_n$ is
the union of linear subspaces defined by $z_{i1}=z_{j1},\, z_{i2}=z_{j2}$, and $z_{i3}=z_{j3}$
for $i\neq j$. Therefore, the codimension of $F$ inside $\widetilde X^{(n)}_d$ is
two. This implies that the quotient map $\widetilde X^{(n)}_d\to X^{(n)}_d$ is \'etale in codimension 1
and the proposition follows from Proposition 5.20 of \cite{KM}.
\end{proof}

\subsection{}
For later use, we give a description of $\widetilde X^{(n)}_d$ as a
toric variety. Let us first consider the case in which $d=2$.
Let $M=\mathbb Z^{n+1}$ and $N=\Hom _{\mathbb Z}(M,\mathbb Z)$ its dual.
We denote by $[a_1\, a_2\, \cdots a_l]_{lr}$ the sequence $a_1, a_2, \cdots, a_l$ recurred
$r$ times. For example,
\[
[0\, 0\,1\, 1]_{12} = ( 0, 0, 1,1, 0, 0, 1,1, 0, 0, 1,1).
\]
Using this notation, we define $C^{(n)}_2$ as the $(n+1)\times 2^n$ matrix
whose rows are given by
\[
\begin{aligned}
{[} &1\, 0] _{2^n}\\
[ &0\, 1]_{2^n}\\
[ &0\, 0\, 1\, 1]_{2^n}\\
&\vdots\\
[ & 0^{2^{n-1}}\,  1^{2^{n-1}}]_{2^n}\; ,
\end{aligned}
\]
where `` $0^r$ '' means $0$ repeated $r$-times and the same for `` $1^r$ ''.
For example,
\[
C^{(3)}_2=
\begin{pmatrix}
1 & 0 & 1 & 0 & 1 & 0 & 1 & 0\\
0 & 1 & 0 & 1 & 0 & 1 & 0 & 1\\
0 & 0 & 1 & 1 & 0 & 0 & 1 & 1\\
0 & 0 & 0 & 0 & 1 & 1 & 1 & 1
\end{pmatrix} .
\]
Under the standard identification $N\cong \mathbb Z^{n+1}$, we define
$\sigma ^{(n)}_2\subset N\otimes \mathbb R$ to be a rational polyhedral convex cone generated by
the column vectors in $C^{(n)}_2$. The cone $\sigma ^{(n)}_2$ is a non-simplicial cone
of maximal dimension for $n\geqslant 2$.

\begin{proposition}\label{cone sigma(n)}
The affine variety $\widetilde X^{(n)\prime}_2\subset \mathbb C^{2n}$ defined above is the affine toric variety
$X(\sigma ^{(n)}_2)=\Spec \mathbb C[\sigma ^{(n)\vee}_2\cap M]$.
\end{proposition}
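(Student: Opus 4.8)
The plan is to realize $\widetilde X^{(n)\prime}_2$ as the closure of an algebraic torus in $\mathbb C^{2n}$ and then to match the resulting affine toric variety with $X(\sigma^{(n)}_2)$. On the open locus where all coordinates are nonzero, equation \eqref{generalized nc} with $d=2$ cuts out
\[
T=\{z\in(\mathbb C^*)^{2n}\mid z_{11}z_{12}=\cdots=z_{n1}z_{n2}\},
\]
and putting $t=z_{11}z_{12}$ and $s_i=z_{i1}$ gives an isomorphism $T\cong(\mathbb C^*)^{n+1}$ with $z_{i1}=s_i$, $z_{i2}=t\,s_i^{-1}$. Writing $M_T\cong\mathbb Z^{n+1}$ for the character lattice of $T$, with basis dual to $(t,s_1,\dots,s_n)$, the coordinates restrict to the characters $m(z_{i1})=\mathbf e_{s_i}$ and $m(z_{i2})=\mathbf e_t-\mathbf e_{s_i}$; these generate $M_T$, and the relations in \eqref{generalized nc} are recorded by $m(z_{i1})+m(z_{i2})=\mathbf e_t$ for every $i$. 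I would then check $\widetilde X^{(n)\prime}_2=\overline T$: every component of the complete intersection \eqref{generalized nc} has dimension at least $n+1=2n-(n-1)$, whereas the locus $\{t=0\}$ inside it is a union of coordinate subspaces of dimension $n$; hence no component can lie in $\{t=0\}$, so every component is the closure of its intersection with the irreducible torus $T$, forcing $\widetilde X^{(n)\prime}_2=\overline T$ to be irreducible.

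Next I would identify the coordinate ring with the monoid algebra $\mathbb C[P]$, where $P\subset M_T$ is the monoid generated by the $m(z_{ij})$. The monomial map $z_{ij}\mapsto\chi^{m(z_{ij})}$ kills the binomials of \eqref{generalized nc}, so it induces a surjection $\mathbb C[z_{ij}]/I\twoheadrightarrow\mathbb C[P]$ onto an $(n+1)$-dimensional domain, where $I$ is the ideal generated by those binomials. To see this is an isomorphism it is enough that $\mathbb C[z_{ij}]/I$ itself be an $(n+1)$-dimensional domain: the $n-1$ binomials generate an ideal of height $n-1$, so $\mathbb C[z_{ij}]/I$ is a complete intersection, hence Cohen--Macaulay and in particular $S_1$; it is generically reduced since it is smooth along the dense torus $T$; therefore it is reduced, and being irreducible it is a domain. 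A surjection of domains of equal finite dimension (finitely generated over $\mathbb C$) is then an isomorphism.

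It remains to compare the cones and to check saturation. Dualizing the generators of $P$ yields $\operatorname{Cone}(P)^\vee=\{v\mid 0\le v_{s_i}\le v_t\ \text{for all }i\}$, the cone over the unit $n$-cube, whose rays are the $2^n$ vectors $(1,\epsilon_1,\dots,\epsilon_n)$ with $\epsilon\in\{0,1\}^n$. The unimodular transformation $(v_t,v_{s_1},\dots,v_{s_n})\mapsto(v_t-v_{s_1},v_{s_1},v_{s_2},\dots,v_{s_n})$ sends these rays to $(1-\epsilon_1,\epsilon_1,\epsilon_2,\dots,\epsilon_n)$, which are exactly the columns of $C^{(n)}_2$; thus it carries $\operatorname{Cone}(P)^\vee$ onto $\sigma^{(n)}_2$ and dually identifies $\mathbb C[\operatorname{Cone}(P)\cap M_T]$ with $\mathbb C[\sigma^{(n)\vee}_2\cap M]$. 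For the saturation $P=\operatorname{Cone}(P)\cap M_T$: a lattice point $(a,b_1,\dots,b_n)$ of $\operatorname{Cone}(P)$ satisfies $a+\sum_{i\in J}b_i\ge0$ for all $J\subseteq\{1,\dots,n\}$, and expressing it as $\sum_i p_i\mathbf e_{s_i}+\sum_i q_i(\mathbf e_t-\mathbf e_{s_i})$ amounts to finding integers $q_i\ge\max(0,-b_i)$ with $\sum_i q_i=a$; this is possible precisely because the inequality for $J=\{i\mid b_i<0\}$ reads $a\ge\sum_i\max(0,-b_i)$. Combining, $\widetilde X^{(n)\prime}_2=\Spec\mathbb C[P]=\Spec\mathbb C[\operatorname{Cone}(P)\cap M_T]\cong X(\sigma^{(n)}_2)$.

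The hard part is the middle step, namely that the coordinate ring is exactly the monoid algebra, which splits into two genuinely distinct points: that $I$ is already prime (and not merely that its radical is the toric ideal), which I extract from the complete-intersection structure together with smoothness on the open torus via Serre's criterion; and that the monoid $P$ is saturated, i.e. that $\widetilde X^{(n)\prime}_2$ is already normal, which is the explicit combinatorial count above. Once both are secured, matching $\operatorname{Cone}(P)^\vee$ with $\sigma^{(n)}_2$ is only a unimodular change of coordinates.
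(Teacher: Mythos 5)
Your proof is correct, but it takes a genuinely different route from the paper's. The paper observes that $\widetilde X^{(n)\prime}_2$ is the $n$-fold fiber product $(C/B)^n$ of the toric morphism $C=\mathbb C^2\to B=\mathbb C$, $(x_1,x_2)\mapsto x_1x_2$, proves a general lemma (Lemma \ref{fiber product of cones}) identifying a fiber product of affine toric varieties along surjective toric morphisms with the toric variety of the fiber product of cones, and then reads off $\sigma^{(n)}_2$ as $(\sigma_C/N_B)^n$ in a suitable basis. You instead realize $\widetilde X^{(n)\prime}_2$ directly as a torus closure: the dimension count rules out components inside $\{t=0\}$ and gives irreducibility, the complete-intersection structure plus smoothness along $T$ gives reducedness via Serre's criterion (so the binomial ideal is actually prime), the coordinate ring is then the monoid algebra on the characters $m(z_{ij})$, and the explicit saturation check plus the unimodular change of basis matches $\operatorname{Cone}(P)^\vee$ with $\sigma^{(n)}_2$; each of these steps is sound. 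What the paper's route buys is reusability and economy: the same lemma is invoked verbatim for $\widetilde X^{(n)}_3$, and the primality and saturation questions are absorbed into the explicit monomial basis of the tensor product. What your route buys is a self-contained and fully explicit verification that the ideal of \eqref{generalized nc} is prime and that $\widetilde X^{(n)\prime}_2$ is normal --- points the fiber-product lemma treats rather briskly --- together with a transparent reason why the columns of $C^{(n)}_2$ are exactly the vertices of the unit $n$-cube placed at height one.
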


This proposition is implicitly given in \cite{W}, \S4. Here we give a proof of slightly different flavor.

\begin{lemma}\label{fiber product of cones}
Let $\sigma _1,\sigma _2,\bar \sigma$ be strictly convex rational polyhedral cones on
lattices $N_1, N_2, \bar N$, respectively. Assume that we have surjective homomorphisms
$h_1:N_1\to \bar N$ and $h_2:N_2\to \bar N$ such that
$\bar \sigma =h_{1,\mathbb R}(\sigma _1)=h_{2,\mathbb R}(\sigma _2)$.
Let $\pi _i:X(\sigma _i)\to X(\bar \sigma)\; (i=1,2)$ be the corresponding toric morphisms of
affine toric varieties. Then, the fiber product $X(\sigma _1)\times _{X(\bar \sigma)}X(\sigma _2)$
is an affine toric variety corresponding to the cone
\[
\sigma _1\times _{\bar N} \sigma _2
=\{(v_1,v_2)\in (N_1\oplus N_2)_\mathbb R\, |\,v_i\in \sigma _i\,(i=1,2),\; h_1(v_1)=h_2(v_2)\}
\]
on the lattice $N_1\times _{\bar N}N_2=\{(v_1,v_2)\in N_1\oplus N_2\,|\, h_1(v_1)=h_2(v_2)\}$.
\end{lemma}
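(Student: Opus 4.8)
The plan is to pass to coordinate rings and identify the relevant semigroups. Writing $M_i=\Hom_{\mathbb Z}(N_i,\mathbb Z)$ and $\bar M=\Hom_{\mathbb Z}(\bar N,\mathbb Z)$, the surjections $h_i$ dualize to injections $h_i^*:\bar M\hookrightarrow M_i$, and the toric morphism $\pi_i$ corresponds to the $\mathbb C$-algebra map $\mathbb C[\bar\sigma^\vee\cap\bar M]\to\mathbb C[\sigma_i^\vee\cap M_i]$ induced by $h_i^*$; this is well defined because $\bar\sigma=h_{i,\mathbb R}(\sigma_i)$ forces $h_i^*(\bar\sigma^\vee)\subset\sigma_i^\vee$. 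Since $\Spec$ turns fibre products into tensor products,
\[
X(\sigma_1)\times_{X(\bar\sigma)}X(\sigma_2)=\Spec\bigl(\mathbb C[\sigma_1^\vee\cap M_1]\otimes_{\mathbb C[\bar\sigma^\vee\cap\bar M]}\mathbb C[\sigma_2^\vee\cap M_2]\bigr),
\]
so the whole assertion reduces to computing this tensor product. I would first record that the monoid-algebra functor is a left adjoint, hence carries the monoid pushout to the algebra pushout; writing $\Lambda_i=\sigma_i^\vee\cap M_i$ and $\bar\Lambda=\bar\sigma^\vee\cap\bar M$, this gives $\mathbb C[\Lambda_1]\otimes_{\mathbb C[\bar\Lambda]}\mathbb C[\Lambda_2]=\mathbb C[P]$, where $P=\Lambda_1\amalg_{\bar\Lambda}\Lambda_2$ is the pushout monoid, i.e.\ the quotient of $\Lambda_1\oplus\Lambda_2$ by the congruence generated by $(h_1^*\bar\ell+\lambda_1,\lambda_2)\sim(\lambda_1,h_2^*\bar\ell+\lambda_2)$ for $\bar\ell\in\bar\Lambda$.

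Next I would set up the target. From the definition of $N'=N_1\times_{\bar N}N_2$ one has the exact sequence $0\to N'\to N_1\oplus N_2\xrightarrow{h_1-h_2}\bar N\to 0$, whose dual is $0\to\bar M\xrightarrow{(h_1^*,-h_2^*)}M_1\oplus M_2\xrightarrow{p}M'\to 0$ with $M'=\Hom(N',\mathbb Z)$; surjectivity of the $h_i$ makes the middle subgroup saturated, so $M'$ is a genuine lattice. Writing $\tau=\sigma_1\times_{\bar N}\sigma_2=(\sigma_1\oplus\sigma_2)\cap N'_{\mathbb R}$, it is a strictly convex rational polyhedral cone (intersection of the strictly convex rational cone $\sigma_1\oplus\sigma_2$ with a rational subspace), so $X(\tau)$ is defined, and the standard duality for a cone cut by a subspace gives $\tau^\vee=p_{\mathbb R}(\sigma_1^\vee\oplus\sigma_2^\vee)$ inside $M'_{\mathbb R}$. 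The two projections $N'\to N_i$ send $\tau$ into $\sigma_i$ and agree after composing with $h_i$, so they induce toric morphisms $X(\tau)\to X(\sigma_i)$ over $X(\bar\sigma)$ and hence, by the universal property, a canonical morphism $X(\tau)\to X(\sigma_1)\times_{X(\bar\sigma)}X(\sigma_2)$; on monoids this is the map $\varphi:P\to\tau^\vee\cap M'$, $[\lambda_1,\lambda_2]\mapsto p(\lambda_1,\lambda_2)$. It remains to prove that $\varphi$ is a bijection of monoids, for then $\mathbb C[P]\cong\mathbb C[\tau^\vee\cap M']$ and the lemma follows.

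The bijectivity of $\varphi$ is the heart of the matter, and I expect it to be the main obstacle; it is precisely the statement that the fibre product is reduced and normal. For surjectivity one must solve an integral lifting problem: given $m'\in\tau^\vee\cap M'$, choose an integral lift $(a_1,a_2)\in M_1\oplus M_2$ and a real lift $(u_1,u_2)\in\sigma_1^\vee\oplus\sigma_2^\vee$; their difference lies in $\iM(h_1^*,-h_2^*)_{\mathbb R}$, and one must correct $(a_1,a_2)$ by an integral element $(h_1^*\bar m,-h_2^*\bar m)$ so that both components land in the respective dual cones. Here the hypothesis $h_{i,\mathbb R}(\sigma_i)=\bar\sigma$ is essential: it rewrites the two cone conditions as a single inequality $-f_2\leqslant\bar m\leqslant f_1$ on $\bar\sigma$ between the integral piecewise-linear functions $f_i(\bar v)=\min\{\langle a_i,v_i\rangle : v_i\in\sigma_i,\ h_i(v_i)=\bar v\}$, and rationality of all the data should yield an integral solution $\bar m$. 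For injectivity one assumes $p(\lambda_1,\lambda_2)=p(\lambda_1',\lambda_2')$, so that $\lambda_1-\lambda_1'=h_1^*\bar m$ and $\lambda_2'-\lambda_2=h_2^*\bar m$ for a single $\bar m\in\bar M$; since $\bar\sigma^\vee$ is full dimensional, $\bar\Lambda$ generates $\bar M$ as a group, and writing $\bar m$ as a difference of elements of $\bar\Lambda$ one threads the two points together by a chain of the elementary moves defining the congruence. The delicate point is to carry out this chain without ever leaving $\Lambda_1\oplus\Lambda_2$ and without appealing to cancellation in $P$; once more it is surjectivity of $h_i$ together with $h_i(\sigma_i)=\bar\sigma$ that provides enough room in each $\Lambda_i$ to perform the moves. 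With $\varphi$ shown to be bijective, the identification of rings is immediate and the lemma is proved.
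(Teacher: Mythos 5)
Your reduction is the same as the paper's: pass to coordinate rings, identify $\mathbb C[\Lambda_1]\otimes_{\mathbb C[\bar\Lambda]}\mathbb C[\Lambda_2]$ with the monoid algebra of the pushout monoid $P$, and compare $P$ with $\tau^\vee\cap M'$ by duality (the paper runs the duality on the $M$-side, you on the $N$-side, but it is the same computation). The formal steps you carry out are correct. The problem is that the argument stops exactly where you yourself locate the heart of the matter: the bijectivity of $\varphi$ is described but not proved, and the route you sketch for surjectivity does not go through. You reduce surjectivity to finding a lattice point of the nonempty rational polyhedron $\{\bar m\,:\,-f_2\leqslant\bar m\leqslant f_1\}$ and assert that ``rationality of all the data should yield an integral solution''; it need not, since a nonempty rational polyhedron can contain no lattice point at all.

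This is not a repairable omission within the stated hypotheses: the saturation statement, and hence the lemma as literally stated, fails in general. Take $\bar N=\mathbb Z$, $\bar\sigma=\mathbb R_{\geqslant 0}$, $N_1=N_2=\mathbb Z^2$ with $h_i(a,b)=b$, and $\sigma_i$ the cone generated by $(1,0)$ and $(-1,2)$; all hypotheses hold, $X(\sigma_i)=\Spec \mathbb C[u,v,w]/(uw-v^2)\to\mathbb C$ is $(u,v,w)\mapsto u$, and the fiber product is $\Spec\mathbb C[u,v,w,v',w']/(uw-v^2,\ uw'-v'^2)$. This is a reduced, irreducible complete intersection, but it is singular along the codimension-one locus $u=v=v'=0$, hence not normal: the rational function $vv'/u$, whose square is $ww'$, is a character lying in $\tau^\vee\cap M'$ (for this character your polyhedron of admissible $\bar m$ is the single non-integral point $\bar m=1/2$), so it belongs to $\mathbb C[\tau^\vee\cap M']$ and to the normalization, but not to the fiber product ring. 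Hence the fiber product is not $X(\sigma_1\times_{\bar N}\sigma_2)$. Any complete proof must therefore use more than surjectivity of $h_i$ and $h_{i,\mathbb R}(\sigma_i)=\bar\sigma$ --- in the paper's application $\bar\Lambda\to\Lambda_i$ is the diagonal $\mathbb N\to\mathbb N^2$ and its iterated fiber products, for which both the integral lifting and the chaining argument for injectivity can be checked directly. In fairness, the paper's own proof also merely asserts the two contested points (``the tensor product has a basis consisting of monomials subject to the relation\dots''), so your write-up faithfully reproduces the structure of the published argument while flagging, without closing, its gaps.
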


\begin{proof}
Let $M_1,M_2, \bar M$ be dual lattices of $N_1,N_2,\bar N$, respectively.
Since $h_i\; (i=1,2)$ is surjective, $\bar M$ is a direct summand of $M_i$.
The tensor product
\[
\mathbb C[\sigma _1^{\vee}\cap M_1] \underset{\mathbb C[\bar \sigma^{\vee} \cap \bar M]}{\otimes}
\mathbb C[\sigma _2^{\vee}\cap M_2]
\]
has a basis consisting of monomials $(m_1,m_2)\, (m_i\in M_i)$ subject to a relation
\[
(m_1,m_2)=(m_1',m_2') \mbox{ if and only if } m_1-m_1'=m_2'-m_2\in \bar M.
\]
This implies that the tensor ring is the monoid ring corresponding to
a cone $C$ that is the image of the product cone
$\sigma _1^{\vee}\times \sigma _2^{\vee}$ under the surjective homomorphism to the fiber co-product
\[
M_1\oplus M_2 \to M_1+_{\bar M}M_2=(M_1\oplus M_2)/\bar M.
\]
In particular, $C$ is spanned by the vectors $(m_1,m_2)\in M_1+_{\bar M}M_2$ with
$m_i$ a generator of a ray of $\sigma _i$ for each $i=1,2$.
Passing to the dual, the dual cone $C^{\vee}$ is cut out by positive half-planes
defined by $(m_1,m_2)$ as above on the fiber product of lattices $N_1\times _{\bar N}N_2$.
This immediately implies that $C^{\vee}$ is nothing but the fiber product of cones
$\sigma _1\times _{\bar N} \sigma _2$.
\end{proof}

\begin{proof}[Proof of Proposition \ref{cone sigma(n)}]
 Let $C\to B=\mathbb C$ be a family of curves defined by $(x_1,x_2)\mapsto x_1x_2$.
 This is a toric morphism corresponding to a surjective homomorphism of lattices
 \[
 (1\; 1):N_C=\mathbb Z^2\to N_B=\mathbb Z
 \]
 that is compatibile with the cones $\sigma _C=\mathbb R_{\geqslant 0} \, d_1
 +\mathbb R_{\geqslant 0} \, d_2$ and
 $\sigma _B=\mathbb R_{\geqslant 0}$,
 where $d_1=\begin{pmatrix} 1 \\ 0 \end{pmatrix}$ and $d_2=\begin{pmatrix} 0 \\ 1 \end{pmatrix}$.
 Noting that $\widetilde X^{(n)\prime}_2$ is an $n$-fold fiber product $(C/B)^n$,
 the cone $\sigma ^{(n)}_2$ is nothing but the fiber product of cones
 \[
 (\sigma _C/N_B)^n = \sigma _C \times _{N_B} \cdots \times _{N_B} \sigma _C
 \]
 on the lattice $(N_C/N_B)^n$
 by the lemma above. If we take a basis $(N_C/N_B)^n\cong \mathbb Z^{n+1}$ as
 \[
 \begin{aligned}
 e_0 & = (d_1,d_1,\dots, d_1),\quad e_1 = (d_2,d_1,\dots, d_1) \\
 e_j & = (0, \dots, \underset{\scriptsize \substack{\wedge \\ (j+1)}}{d_2-d_1},0,\dots, 0)
 \end{aligned}
 \]
 one immediately sees that the cone $(\sigma _C/N_B)^n$ is generated by the column vectors
 of the matrix $C^{(n)}_2$ defined above.
\end{proof}

\subsection{}\label{symmetric group action}
Let $\widetilde M=\mathbb Z^2 \otimes \mathbb Z^n$. The symmetric group $\mathfrak S_n$ acts on $\widetilde M$ by
the permutation representation on the second factor. Define a $(n+1)\times 2n$ integral matrix $P^{(n)}_2$ by
\begin{equation}\label{P n 2}
P^{(n)}_2=(e_0\quad e_1\quad e_0+e_1-e_2\quad e_2\quad \cdots \quad e_0+e_1-e_n\quad e_n),
\end{equation}
where $e_0,\cdots, e_n$ is the standard basis of $M=\mathbb Z^{n+1}$.
The cone $\sigma ^{(n)\vee}_2$
is nothing but the image under the surjective linear map
$P^{(n)}_2:\widetilde M\otimes \mathbb R \to M\otimes \mathbb R$ of
the cone spanned by the standard simplex in $\widetilde M$.
Therefore, we have an induced action of $\mathfrak S_n$ on $M$ and its dual $N$.
More precisely, $\mathfrak S_n$ acts on $M$ by permuting $n$ pairs of vectors
\[
(e_0\quad e_1 \quad| \quad e_0+e_1-e_2\quad e_2\quad |\quad \cdots \quad
|\quad e_0+e_1-e_n\quad e_n),
\]
so that the action of $\mathfrak S_n$ on $N$ is represented by matrices
\begin{equation}\label{Sn action on N}
(1\; 2)=
\left(
\begin{array}{ccc|c}
1 & 1 & -1 & \\
0 & 0 & 1 & \\
0 & 1 & 0 & \\ \hline
& & & I_{n-2}
\end{array}
\right)\mbox{\; and \;}
(k\;\; k+1)=
\left(
\begin{array}{c|cc|c}
I_k & & &\\ \hline
& 0 & 1 &\\
& 1 & 0 & \\ \hline
& & & I_{n-k-1}
\end{array}
\right)
\end{equation}
for $k>1$.
The cone $\sigma ^{(n)}_2$ and its dual $\sigma ^{(n)\vee}_2$ are invariant under
the action of $\mathfrak S_n$. Let $\sigma$ be a cone in
$(N\oplus \mathbb Z^n)\otimes  \mathbb R$ spanned by
$\sigma ^{(n)}_2$ and the standard basis of $\mathbb Z^n$. Then,
the associated affine toric variety $X(\sigma)$ is nothing but $\widetilde X^{(n)}_2$.
The action of $\mathfrak S_n$ on $\widetilde X^{(n)}_2$ coincides with
the action induced by the diagonal $\mathfrak S_n$-action on $N$ and $\mathbb Z^n$.

\subsection{}
We also have a similar description of $\widetilde X^{(n)}_3$ as a toric variety.
Let $M=\mathbb Z^{2n+1}$ and $N=\Hom _{\mathbb Z}(M,\mathbb Z)\cong \mathbb Z^{2n+1}$.
Let $C^{(n)}_3$ be the $(2n+1)\times 3^n$ matrix whose rows are given by
\[
\begin{aligned}
{}[ & 1\, 0\, 0]_{3^n}\\
[ & 0\, 1\, 0]_{3^n}\\
[ & 0\, 0\, 1]_{3^n}\\
[ & 0\, 0\, 0\, 1\, 1\, 1\, 0\, 0\, 0]_{3^n}\\
[ & 0\, 0\, 0\, 0\, 0\, 0\, 1\, 1\, 1]_{3^n}\\
& \vdots \\
[ & 0^{3^{n-1}}\, 1^{3^{n-1}}\, 0^{3^{n-1}} ]_{3^n}\\
[ & 0^{3^{n-1}}\, 0^{3^{n-1}}\, 1^{3^{n-1}} ]_{3^n}\; .
\end{aligned}
\]
and $\sigma ^{(n)}_3\subset N\otimes \mathbb R$ the cone generated
by the column vectors of $C^{(n)}_3$. Then, the associated toric
variety $X(\sigma ^{(n)}_3)=\Spec \mathbb C[\sigma ^{(n)\vee}_3\cap M]$ is
nothing but $\widetilde X^{(n)}_3$. The proof of this claim is completely parallel
to the case of $\widetilde X^{(n)\prime}_2$; it is a direct consequence of Lemma \ref{fiber product of cones}.

\section{Gorenstein canonical orbifold model}

In this section, we construct a Gorenstein canonical model of
$X^{(n)}_2$ with only quotient singularities. From now on, we concentrate on the case
$d=2$. We suppress the subscript and \underline{write $\widetilde X^{(n)}$, $\widetilde X^{(n)\prime}$ and $X^{(n)}$
instead of$\widetilde X^{(n)}_2$, $\widetilde X^{(n)\prime}_2$ and $X^{(n)}_2$},
respectively, for better readability.

\begin{proposition}\label{tilde small}
There is an $\mathfrak S_n$-equivariant small projective toric resolution
\[
\tilde \mu ^{(n)\prime }: \widetilde Z^{(n)\prime}\to \widetilde X^{(n)\prime}.
\]
\end{proposition}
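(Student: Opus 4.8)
The plan is to stay entirely on the toric side, using the identification $\widetilde X^{(n)\prime}=X(\sigma^{(n)}_2)$ from Proposition \ref{cone sigma(n)} and building $\widetilde Z^{(n)\prime}$ as $X(\Sigma)$ for a carefully chosen fan $\Sigma$ refining $\sigma^{(n)}_2$. First I would pin down the shape of $\sigma^{(n)}_2$. Pairing the column generators of $C^{(n)}_2$ with $f=e_0^*+e_1^*\in M$ yields the constant value $1$, so $\sigma^{(n)}_2$ is a cone over an $n$-dimensional polytope sitting in the affine slice $\{f=1\}$. Reading the generators in the basis $e_0,\dots,e_n$, the ray $w_S$ attached to a subset $S\subseteq\{1,\dots,n\}$ has $e_0$-coordinate $[1\notin S]$, $e_1$-coordinate $[1\in S]$, and $e_i$-coordinate $[i\in S]$ for $i\geqslant 2$; hence in the slice it projects to the $0$–$1$ indicator vector of $S$. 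The polytope is therefore the unit cube $[0,1]^n$, its $2^n$ vertices being exactly the primitive ray generators, and the $\mathfrak S_n$-action of \ref{symmetric group action} becomes permutation of the $n$ cube coordinates.

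Since a toric refinement is small exactly when it introduces no new rays, the problem reduces to finding an $\mathfrak S_n$-equivariant, regular, unimodular triangulation of $[0,1]^n$ using only its vertices. I would take the order (braid-arrangement) triangulation, whose maximal simplices are
\[
\Delta_\tau=\{\,0\leqslant x_{\tau(1)}\leqslant\cdots\leqslant x_{\tau(n)}\leqslant 1\,\},\qquad \tau\in\mathfrak S_n,
\]
obtained by slicing the cube along all hyperplanes $x_i=x_j$. Because this arrangement is manifestly stable under permuting coordinates, the group simply permutes the cells $\Delta_\tau$ among themselves, so $\mathfrak S_n$-equivariance comes essentially for free.

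The two substantive points are unimodularity and projectivity. For unimodularity, the vertices of $\Delta_\tau$ are the nested sets $S_k=\{\tau(n-k+1),\dots,\tau(n)\}$ for $k=0,\dots,n$, and I would compute the successive differences $w_{S_k}-w_{S_{k-1}}$: passing from $S_{k-1}$ to $S_k$ adjoins the element $j=\tau(n-k+1)$, which changes $w_S$ by $e_j$ when $j\geqslant 2$ and by $e_1-e_0$ when $j=1$. Running over the permutation, the $n$ differences are exactly $e_2,\dots,e_n$ together with $e_1-e_0$, which, with the apex $w_{S_0}=e_0$, form a $\mathbb Z$-basis of $N=\mathbb Z^{n+1}$; hence the simplicial cone on the rays $w_{S_0},\dots,w_{S_n}$ is smooth, and $X(\Sigma)$ is smooth. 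For projectivity I would exhibit the support function given on the slice by $h(x)=\sum_{i<j}|x_i-x_j|$: it is convex, it is linear precisely on each chamber $\Delta_\tau$ (where all the signs of $x_i-x_j$ are fixed), and it genuinely bends across each wall $x_i=x_j$, so the regular subdivision it induces is exactly the order triangulation, making $\tilde\mu^{(n)\prime}\colon X(\Sigma)\to X(\sigma^{(n)}_2)$ projective; note that $h$ is itself $\mathfrak S_n$-invariant, keeping every piece of the construction compatible.

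The step needing the most care — and the part I expect to be the main obstacle to writing cleanly — is the bookkeeping in the unimodularity computation: one must track how the special treatment of the index $1$, which lives in the two coordinates $e_0,e_1$, interacts with the ordering $\tau$, so that the difference vectors really assemble into a lattice basis for \emph{every} $\tau$. Once that is organized, smallness is automatic because no ray is added, smoothness follows cone by cone, projectivity follows from the convex function $h$, and equivariance follows from the symmetry of the braid arrangement, completing the construction of $\widetilde Z^{(n)\prime}$.
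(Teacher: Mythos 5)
Your construction is correct, but it takes a genuinely different route from the paper. The paper proceeds inductively: it blows up $\widetilde X^{(n)\prime}$ along the $\mathfrak S_n$-invariant non-Cartier Weil divisor $\Sigma^{(n)}=(z_{11}=\dots=z_{n1}=0)$, observes that the result is covered by charts isomorphic to $\widetilde X^{(n-1)\prime}\times\mathbb C$, and iterates; smallness and equivariance are read off from this local product structure, and projectivity comes for free from the blow-up description (the fan is then obtained a posteriori as a sequence of star subdivisions of $\sigma^{(n)}_2$ at its own rays). You instead build the fan in one stroke: you identify $\sigma^{(n)}_2$ as the cone over the unit cube $[0,1]^n$ with the $\mathfrak S_n$-action permuting cube coordinates (both identifications check out against $C^{(n)}_2$ and the matrices in \eqref{Sn action on N}), take the braid-arrangement (order) triangulation, verify unimodularity of each simplicial cone via the difference vectors $\{e_0,\ e_1-e_0,\ e_2,\dots,e_n\}$, and get projectivity from the invariant piecewise-linear convex function $\sum_{i<j}|x_i-x_j|$, with smallness automatic since no rays are added. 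The two constructions produce the same fan: your $n!$ cones over chains of subsets are exactly the $\mathfrak S_n$-orbit of the paper's cone $\delta^{(n)}$ in \eqref{delta n}. What the paper's route buys is the iterated blow-up description (used to connect with Wang's augmented relative Hilbert scheme and to see the charts $\widetilde X^{(n-1)\prime}\times\mathbb C$); what your route buys is that the combinatorial facts the paper extracts afterwards with some effort --- the $n!$ maximal cones, the transitive $\mathfrak S_n$-action on them, and ultimately the relation to the Coxeter fan of $A_{n-1}$ in Proposition \ref{description coxeter} --- become immediate, and the projectivity argument is self-contained rather than inherited from the blow-ups.
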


\begin{proof}
$\widetilde X^{(n)\prime}$ is the affine closed subvariety in $\mathbb C^{2n}$ defined by
\begin{equation}\label{d2eq}
z_{11}z_{12}=z_{21}z_{22}=\cdots =z_{n1}z_{n2}.
\end{equation}
The $n$-plane $\Sigma ^{(n)}$ defined by
$z_{11}=z_{21}=\cdots =z_{n1}=0$
is an $\mathfrak S_n$-invariant non-Cartier Weil divisor on $X^{(n)\prime}$.
Let
\[
\tilde f ^{(n)}:\widetilde W^{(n)\prime} \to \widetilde X^{(n)\prime}
\]
be the blowing-up of $\widetilde X^{(n)\prime}$ along $\Sigma ^{(n)}$.
$\widetilde W^{(n)\prime}$ is the closed subvariety of $\mathbb C^{2n}\times \mathbb P^{n-1}$
defined by
\[
z_{i1}y_j-z_{j1}y_i=0\quad (i\neq j)
\]
along with \eqref{d2eq}, where $[y_1:\cdots :y_n]$ is the homogeneous coordinate of $\mathbb P^{n-1}$.
Let $P_i=[0:\cdots :0: \overset{\substack{i\\ \vee}}{1} : 0 : \cdots : 0]\in \mathbb P^{n-1}$
and $U_i=(y_i\neq 0)\subset \mathbb P^{n-1}$.  Then, it is easily checked using coordinate that
there is a natural isomorphism of affine varieties
\[
\widetilde W^{(n)\prime}\cap (\mathbb C^{2n}\times U_i)\cong
\widetilde X^{(n-1)\prime}\times \mathbb C.
\]
Moreover,
\[
D_i = \widetilde W^{(n)\prime}\cap (\mathbb C^{2n}\times \{P_i\}) \quad (i=1,2,\cdots, n)
\]
is a non-$\mathbb Q$-Cartier Weil divisor of $\widetilde W^{(n)\prime}$ that is
the strict transform of the divisor on $\widetilde X^{(n)\prime}$ defined by
\[
z_{11}=\cdots =z_{i-1,1}=z_{i2}=z_{i+1,1}=\cdots =z_{n1}=0.
\]
$D_i$ is identified with
$\Sigma ^{(n-1)}\times \mathbb C\subset \widetilde X^{(n-1)\prime}\times \mathbb C$ under
the isomorphism above. $D_i$'s are disjoint to each other and the union
$D=\amalg \, D_i$ is $\mathfrak S_n$-invariant. Therefore, the blowing-up
of $\widetilde W^{(n)\prime}$ along $D$ is $\mathfrak S_n$-equivariant.
As it is locally isomorphic to $\tilde f ^{(n-1)}$, we get an $\mathfrak S_n$-equivariant
small resolution of $\widetilde X^{(n)\prime}$ by induction on $n$.
The centers of the blowing-ups are
strict transform of torus invariant (non-$\mathbb Q$-Cartier) divisors on
$\widetilde X^{(n)\prime}$, so the resolution is a toric morphism.
\end{proof}

\begin{remark}
The toric variety $\widetilde Z^{(n)\prime}$ also appeared in \cite{W} as
the local model of `augmented relative Hilbert scheme'.
\end{remark}

\subsection{}
From this proposition, one immediately sees that the relative self-product
$\widetilde X^{(n)}$
admits an $\mathfrak S_n$-equivariant
small projective toric resolution
\[\tilde \mu ^{(n)}=(\tilde \mu ^{(n)\prime}\times \id):
\widetilde Z^{(n)} = \widetilde Z^{(n)\prime}\times \mathbb C^n
\to \widetilde X^{(n)}=\widetilde X^{(n)\prime}\times \mathbb C^n.
\]
Now we let $Z^{(n)}=\widetilde Z^{(n)}/\mathfrak S_n
=(\widetilde Z^{(n)\prime} \times \mathbb C^n)/\mathfrak S_n$.
Then, we get a small projective birational morphism
\[
\mu ^{(n)}: Z^{(n)}\to X^{(n)}
\]
and an induced family
\[
\rho ^{(n)}=\pi ^{(n)} \circ \mu ^{(n)} : Z^{(n)}\to B.
\]
We want to study the singular locus of $Z^{(n)}$. For that purpose,
we need a description of the toric birational morphism
$\widetilde \mu ^{(n)\prime}: \widetilde Z^{(n)\prime}\to \widetilde X^{(n)\prime}$
in terms of fan.

\subsection{}
The blowing-up $\tilde f^{(n)}$ appeared above corresponds to
the star subdivision (see \cite{CLS}, \S11.1 for the definition)
$\Theta^{(n)}$ of the cone $\sigma ^{(n)}$ with respect to the ray
spanned by ${}^t(1,0,\cdots, 0)$. The fact that
the ray is a one dimensional face of $\sigma ^{(n)}$ corresponds to
the smallness of $\tilde f^{(n)}$.
The cone $\sigma ^{(n)}$ is spanned by the column vectors of
the $(n+1)\times 2^n$ matrix $C^{(n)}$. one can check that the resolution
$\tilde \mu ^{(n)\prime}: \widetilde Z^{(n)\prime}\to \widetilde X^{(n)\prime}$
is given by the consecutive star subdivisions
of $\sigma ^{(n)}$ with respect
to first $(2^n-1)$ column vectors (in this order).

Let $\Delta ^{(n)}$ be the resulted fan in $N\otimes \mathbb R$ and
$\widetilde Z^{(n)\prime}$ is the toric variety $X(\Delta ^{(n)})$.
By the proof of Proposition \ref{tilde small}, one sees that
$\mathfrak S_n$ acts transitively on the set of open subsets
$\{\widetilde W^{(n)\prime}\cap (\mathbb C^{2n}\times U_i)\}_{i=1}^n$.
This means that $\mathfrak S_n$ acts transitively on the set of maximal cones
$\{\theta _i\}_{i=1}^n$ of the fan $\Theta ^{(n)}$ corresponding to $\widetilde W^{(n)\prime}$.
Actually, $\mathfrak S_n$ acts on it via the permutation of the index set $\{1,2,\cdots, n\}$.
Each cone $\theta _i$ can be identified with $\sigma ^{(n-1)}$ and
its stabilizer subgroup $\Stab (\theta _i)\subset \mathfrak S_n$ is nothing but
the subgroup of permutations that leave $i$ invariant, which is naturally isomorphic to
$\mathfrak S_{n-1}$. An inductive argument infers that the set of maximal cones of
the fan $\Delta ^{(n)}$ consists of $n!$ cones and $\mathfrak S_{n}$ acts on the set
transitively. By the construction, $\Delta ^{(n)}$ contains a cone $\delta ^{(n)}$ that is
generated by the column vectors of
\begin{equation}\label{delta n}
\begin{pmatrix}
1 & 0 & 0 & \cdots & 0\\
0 & 1 & 1 & \cdots & 1\\
0 & 0 & 1 & \cdots & 1\\
 & & & \ddots & \\
0 & 0 & 0 & \cdots & 1
\end{pmatrix} .
\end{equation}

\subsection{}\label{cremona}
Let $\bar \Delta ^{(n)}$ be the Coxeter fan of $A_{n-1}$-root system,
namely the fan whose maximal cones are Weyl chambers of $A_{n-1}$-root system
on the weight lattice $\overline N=\mathbb Z^{n-1}$.
Here, we adopt somewhat non-standard realization of $A_{n-1}$-root system.
Regardless of inner product, we set the non-zero vectors in $\overline N=\mathbb Z^{n-1}$ whose
entries are $0$ or $1$ the positive primitive weight vectors, and
the negative primitive weight vectors are the negation of the positive primitive weight vectors.
Note that this determines an $\mathfrak S_n$-action on the lattice $\overline N=\mathbb Z^{n-1}$,
namely for $k<n-1$
\[
(k\ \ k+1)=
\left(\begin{array}{c|cc|c}
I_{k-1} & & & \\ \hline
& 0 & 1 & \\
& 1 & 0 & \\ \hline
& & & I_{n-k-2}
\end{array}\right)
\mbox{\; and \;}
(n-1\ \ n)=
\begin{pmatrix}
1 & 0 & \cdots & 0 & -1 \\
0 & 1 & \cdots & 0 & -1 \\
\vdots & \vdots & \ddots & \vdots & -1\\
0 & 0 & \cdots & 1 & -1 \\
0 & 0 & \cdots & 0 & -1
\end{pmatrix}.
\]
Now we consider the projective toric variety $X(\bar{\Delta}^{(n)})$.
It has been studied by several authors
\cite{P, Stem, DL} in connection with combinatorics theory. Speaking in a geometric language,
$X(\bar{\Delta}^{(n)})$ is the canonical elimination of indeterminacy
of the standard Cremona transformation of degree $n-1$ in
$\mathbb P^{n-1}$ (\cite{D}, Example 7.2.5).  More precisely, we have a sequence of projective
birational morphisms
\[
g: X(\bar{\Delta}^{(n)})=X_1\overset{g_1}{\lto} X\lto \cdots
\lto X_{n-2}\overset{g_{n-2}}{\lto} X_{n-1}=\mathbb P^{n-1},
\]
where $g_i$ is the blowing-up of the strict transform of the union of the linear subspaces defined
by vanishing of $i+1$ projective coordinates (\cite{DL}, Lemma 5.1).
We take a fan $\Phi$ in $\overline N=\mathbb Z^{n-1}$ spanned by
\[
v_1=\begin{pmatrix} 1 \\ 0 \\  \vdots \\ 0 \end{pmatrix},\;
v_2=\begin{pmatrix} 0 \\ 1 \\  \vdots  \\ 0 \end{pmatrix},\;
\dots ,\;
v_{n-1}=\begin{pmatrix} 0 \\ 0 \\  \vdots \\  1 \end{pmatrix}, \;
v_n=\begin{pmatrix} -1 \\ -1 \\ \vdots \\ -1 \end{pmatrix}.
\]
The fan $\bar \Delta ^{(n)}$ is a subdivision of $\Phi$ and the associated toric morphism
$X(\bar \Delta ^{(n)})\to X(\Phi)=\mathbb P^{n-1}$ is nothing but the above-mentioned birational
morphism $g$.

Let $D_{pos}$ (resp. $D_{neg}$) be the
torus invariant divisor on a toric variety $X(\bar \Delta^{(n)})$ corresponding
to the sum of positive (resp. negative) primitive weight vectors.
We take a homogeneous coordinate $[x_1:\dots : x_n]$ on $\mathbb P^{n-1}$ such that
the prime divisor corresponding to $v_j$ is the hyperplane $(x_j=0)$ for $1\leq j\leq n$.
Then, the $\mathfrak S_n$-action on the toric variety $X(\Phi)$ coincides with the
natural permutation of coordinates on $\mathbb P^{n-1}$:
\[
s\cdot [x_1:\dots:x_n] = [x_{s(1)}:\dots :x_{s(n)}]\quad (s\in \mathfrak S_n).
\]
Under this choice of coordinate, we have $D_{neg}=g^* \div (x_n)$.
Let $\Phi ' $ be another $\mathbb P^{n-1}$-fan on $\overline N$ spanned by
$-v_1,\; -v_2,\; \dots ,\; -v_n$. $\bar \Delta^{(n)}$ is again a subdivision of $\Phi '$ and
let $h:X(\bar \Delta ^{(n)})\to X(\Phi ')=\mathbb P^{n-1}$ be the associated birational morphism.
Then, the composition $\xymatrix{h\circ g^{-1}:\mathbb P^{n-1}\ar@{..>}[r] & \mathbb P^{n-1}}$ is nothing but the
standard Cremona transformation
\[
[x_1:\dots :x_n]\mapsto \left[\frac 1{x_1} : \dots : \frac 1{x_n}\right],
\]
and we have $D_{pos}=h^*\div(x_n)$.
In the below, we sometimes denote the toric variety $X(\bar \Delta^{(n)})$ by $X(A_{n-1})$ for easy recognition.

\begin{proposition}\label{description coxeter}
The toric variety $\widetilde Z^{(n) \prime}=X(\Delta^{(n)})$ is
isomorphic to the total space of a rank 2 vector bundle
\[
\mathcal O(-D_{pos})\oplus \mathcal O(-D_{neg})
\]
over $X(\bar \Delta ^{(n)})=X(A_{n-1})$. Moreover, the projection
\[
\eta ^{(n)}: X(\Delta ^{(n)}) \to X(\bar{\Delta}^{(n)})
\]
is $\mathfrak S_n$-equivariant.
\end{proposition}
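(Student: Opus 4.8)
The plan is to exhibit the projection underlying $\eta^{(n)}$ as the map splitting off an $\mathfrak S_n$-invariant rank-two fibre lattice, to read off the fibration from the single cone $\delta^{(n)}$ by $\mathfrak S_n$-transitivity, and finally to pin down the two line-bundle summands. Write $f_0,\dots,f_n$ for the standard basis of $N\cong\mathbb Z^{n+1}$ and put $s=f_1+\cdots+f_n$. The first step is to check, directly from \eqref{Sn action on N}, that both $f_0$ and $s$ are $\mathfrak S_n$-fixed: the generators $(k\ k+1)$ with $k>1$ only permute $f_2,\dots,f_n$, while $(1\ 2)$ fixes $f_0$ and sends $f_1\mapsto f_0+f_2$, $f_2\mapsto -f_0+f_1$, so that $s\mapsto s$. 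Hence $K=\mathbb Z f_0\oplus\mathbb Z s$ is a saturated $\mathfrak S_n$-invariant sublattice of rank two, $\overline N:=N/K\cong\mathbb Z^{n-1}$ inherits an $\mathfrak S_n$-action, and the quotient map $p\colon N\to\overline N$ is $\mathfrak S_n$-equivariant. A short computation on the simple reflections identifies this induced action, after a suitable lattice isomorphism $\overline N\cong\mathbb Z^{n-1}$, with the Coxeter action of \S\ref{cremona}; this already yields the equivariance of $\eta^{(n)}$.

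Next I would establish the fibration. Working with the distinguished maximal cone $\delta^{(n)}$ of \eqref{delta n}, whose generators are $f_0$, $s$ together with $f_1,\ f_1+f_2,\dots,f_1+\cdots+f_{n-1}$, one sees that $\operatorname{Cone}(f_0,s)$ is a face and that $p$ sends the remaining $n-1$ generators to the rays of a single Weyl chamber $\bar\delta$ of $\bar\Delta^{(n)}$ (using $\bar f_1=-(\bar f_2+\cdots+\bar f_n)$, these images are primitive weight vectors forming a chain). Since $\delta^{(n)}$ is smooth — the matrix \eqref{delta n} is unimodular — the chart $X(\delta^{(n)})\cong\mathbb C^{n+1}$ splits as $X(\bar\delta)\times\mathbb C^2$ with the $\mathbb C^2$ along $K$. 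Because $\mathfrak S_n$ acts transitively on the $n!$ maximal cones of $\Delta^{(n)}$ and on the $n!$ chambers of $\bar\Delta^{(n)}$, and $p$ is equivariant, every maximal cone of $\Delta^{(n)}$ maps this way onto a chamber; as a fan is determined by its maximal cones, $\Delta^{(n)}$ is precisely the fan of a split rank-two toric bundle over $\bar\Delta^{(n)}$ with projection $\eta^{(n)}$, and the image of a section of $p$ gives a zero section showing it is a genuine vector bundle.

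It then remains to identify the two summands. The fibre $\mathbb C^2$ has the torus-invariant rays $f_0$ and $s$ as coordinate axes, giving a toric splitting $\mathcal O(-D')\oplus\mathcal O(-D'')$, where $D'$ (resp.\ $D''$) is the sum of the invariant divisors over those rays whose lift has nonzero $f_0$- (resp.\ $s$-) coordinate. Projecting the $2^n$ columns of $C^{(n)}$ to $\overline N$, exactly $f_0$ and $s$ fall into $K$ and become the fibre rays, while each remaining column is the unique lift of a weight ray and, by the row pattern of $C^{(n)}$, has exactly one of its $f_0$- and $s$-coordinates equal to $1$. Reading these coordinates against the Cartier data over $\bar\delta$ — where all section-generators lift in the $s$-direction, so that $D'$ has vanishing Cartier datum there — fixes the normalization and shows $D'\sim D_{pos}$ and $D''\sim D_{neg}$.

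I expect this last identification to be the main obstacle. The delicacy is that the induced $\mathfrak S_n$-action on $\overline N$ does not preserve the decomposition of the weight system into positive and negative vectors, so the invariant splitting directions $f_0,s$ do \emph{not} match $D_{pos},D_{neg}$ divisor-by-divisor, but only up to linear equivalence; concretely $D'$ and $D''$ are sums of hyperplane-type divisors that become $D_{pos}=h^*\div(x_n)$ and $D_{neg}=g^*\div(x_n)$ after the linear equivalences among the pullbacks $h^*\div(x_j)$, $g^*\div(x_j)$ on $X(A_{n-1})$. Keeping the sign conventions straight in this comparison — the orientation of the fibre coordinates is what forces the minus signs in $\mathcal O(-D_{pos})\oplus\mathcal O(-D_{neg})$ — is where the bookkeeping must be done with care, and the rank-one model $\operatorname{Tot}(\mathcal O_{\mathbb P^1}(-1))$ underlying the case $n=2$ serves as a reliable check.
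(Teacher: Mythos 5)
Your proposal is correct and follows essentially the same route as the paper: your invariant sublattice $K=\mathbb Z f_0\oplus\mathbb Z s$ is exactly the kernel of the paper's projection $\Pi=p\circ Q$, and your identification of the two summands via the lifting coefficients of the rays is precisely what the paper extracts from the transformed matrix $QC^{(n)}_2$ together with \cite{CLS}, Proposition 7.3.1. The one place your bookkeeping as written is off is the claim that ``by the row pattern of $C^{(n)}$'' each non-fibre column has exactly one of its $f_0$- and $s$-coordinates equal to $1$: read against the rows of $C^{(n)}_2$ in the standard basis this fails (the column $f_1$ has first and last entries both $0$), and one must first pass to a basis adapted to $K$ --- the paper's unimodular $Q$, under which the $f_0$-coordinate of $v$ becomes $v_0+v_1-v_n$ --- after which the $0/1$ pattern holds on the nose and the match with $D_{pos}$, $D_{neg}$ is divisor-by-divisor, not merely up to linear equivalence as you feared.
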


\begin{proof}
Let $Q: N\to N$ be an automorphism defined by left multiplication of the matrix
\[
Q=
\begin{pmatrix}
1 & 1 & 0 & \cdots & 0 & -1 \\
0 & 1 & 0 & \cdots & 0 & -1 \\
0 & 0 & 1 & \cdots & 0 & -1 \\
\vdots  & \vdots & \vdots & \ddots & \vdots & \vdots \\
0 & 0 & 0 & \cdots & 1 & -1 \\
0 & 0 & 0 & \cdots & 0 & 1
\end{pmatrix}.
\]
Then, we see that
\begin{equation}\label{sum of line bundles}
Q C^{(n)}_2 =
\left(
\begin{array}{c|cccc|cccc|c}
1 & 1 & \cdots & \cdots & 1 & 0 & \cdots &\cdots & 0 &  0 \\ \hline
0 & &&& & &&& & 0 \\
\vdots & \multicolumn{4}{|c|}{\shortstack{positive primitive\\ weight vectors}} & \multicolumn{4}{|c|}{\shortstack{negative primitive\\ weight vectors}} & \vdots \\
0 & &&& & &&& & 0 \\ \hline
0 & 0 & \cdots & \cdots & 0 & 1 & \cdots & \cdots & 1 & 1
\end{array}
\right).
\end{equation}
In particular, $Q\delta ^{(n)}$ is generated by column vectors of
\[
\left(
\begin{array}{c|ccc|c}
1 & 1 & \cdots & 1 & 0 \\ \hline
0 & 1 & \cdots & 1 & 0 \\
\vdots  &  & \ddots & & \vdots\\
0 & 0 & \cdots & 1 & 0 \\ \hline
0 & 0 & \cdots & 0 & 1
\end{array}
\right).
\]
Let $p:N=\mathbb Z^{n+1} \to \overline N=\mathbb Z^{n-1}$ be the projection
to middle $(n-1)$-factors. From the matrix representation \eqref{Sn action on N},
it is easy to see that the composition
\[
\Pi =p\circ Q:N\to \overline N
\]
is $\mathfrak S_n$-equivariant.
As the maximal cones in $\Delta ^{(n)}$ are $\mathfrak S_n$-translates of $\delta ^{(n)}$ and $\Pi$ is $\mathfrak S_n$-equivariant,
we know that $\Pi$ is compatible with fans $\Delta ^{(n)}$ and $\bar \Delta ^{(n)}$, and induces a toric morphism
$\eta ^{(n)}: X(\Delta ^{(n)})\to X(\bar \Delta ^{(n)})$. Moreover, one sees from \eqref{sum of line bundles} and \cite{CLS} Proposition 7.3.1
that $X(\Delta ^{(n)})$ is the total space of the direct sum of line bundles
$\mathcal O(-D_{pos})\oplus \mathcal O(-D_{neg})$.
\end{proof}

\subsection{}\label{chart coordinate}
The coordinate $[x_1:\dots :x_n]$ on $\mathbb P^{n-1}$ gives a convenient system of local coordinate
on $X(A_{n-1})$ as in the following way. Let $\bar \delta ^{(n)}$ be the positive Weyl chamber generated by
the column vectors of
\[
\begin{pmatrix}
1 & 1 & \cdots & 1 \\
0 & 1 & \cdots & 1 \\
\vdots & \vdots & \ddots & \vdots \\
0 & 0 & \cdots & 1
\end{pmatrix}.
\]
This is a smooth cone and the corresponding affine open subset $U$, which is isomorphic to $\mathbb C^{n-1}$,
has a toric coordinate
\[
\left(\frac{x_1}{x_2},\frac{x_2}{x_3},\dots ,\frac{x_{n-1}}{x_n}\right).
\]
A maximal cone in the $A_{n-1}$-Coxeter fan $\bar \Delta ^{(n)}$ is written as $s\cdot \bar \delta ^{(n)}$
for some $s\in \mathfrak S_n$. It is immediate to see that the corresponding affine open subset $U_s$
has a toric coordinate
\[
\left(\frac{x_{s(1)}}{x_{s(2)}},\frac{x_{s(2)}}{x_{s(3)}},\dots ,\frac{x_{s(n-1)}}{x_{s(n)}}\right).
\]

\subsection{}\label{s fixed open}
Let $\lambda=(\lambda_1,\dots, \lambda_r)$ be a partition of $n$, \emph{i.e.},
let $\lambda$ satisfy
$\lambda _1\geqslant \dots \geqslant \lambda _r>0$ and
$\lambda_1+\dots +\lambda _r=n$. We set
$l_j=\sum _{i=1}^j \lambda _i$
for $1\leqslant j\leqslant r$ (and $l_0=0$ for convenience).
Let $c_j\; (1\leqslant j\leqslant r)$ be a cyclic permutation
\[
c_j=(l_{j-1}+1\; \dots \; l_j)
\]
of length $\lambda _j$ and we take a standard element
\begin{equation}\label{standard lambda element}
\begin{aligned}
s=s_{\lambda} &=(1 \; \dots \; l_1)(l_1+1\; \dots \; l_2)\dots (l_{r-1}+1\; \dots \; l_r)\\
&=c_1c_2 \dots c_r
\end{aligned}
\end{equation}
of $\mathfrak S_n$ in the conjugacy class determined by $\lambda$.

Let $\overline N^{\langle s\rangle}$ be the
sublattice of $s$-fixed vectors.
If we denote the standard basis of $\overline N=\mathbb Z^{n-1}$ by $e_1,\dots, e_{n-1}$,
$\overline N^{\langle s\rangle}$ is generated by
\[
e_1+\dots +e_{l_1},\; e_{l_1+1}+\dots +e_{l_2},\; \dots , \; e_{l_{r-2}+1}+\dots +e_{l_{r-1}},
\]
so that $\overline N^{\langle s \rangle}\cong \mathbb Z^{r-1}$. Every $s$-fixed point on $X(A_{n-1})$ is
contained in an open subset
\[
X_{\overline N}(\bar \Delta ^{(n)}\cap (N^{\langle s\rangle}\otimes \mathbb R))\cong (\mathbb C^*)^{n-r} \times X(A_{r-1}),
\]
which $\mathfrak S_n$-equivalently birationally dominates $(\mathbb C^*)^{n-r}\times \mathbb P^{r-1}$.
Let
\[
t_{ji}=\frac{x_{l_{j-1}+i+1}}{x_{l_{j-1}+i}},\; \mathbf t_j=(t_{j1},\dots , t_{j,\lambda _j-1}), \; \mbox{and} \;
y_k = x_{l_{k-1}+1}.
\]
Then,
\[
\left((\mathbf t_1,\dots, \mathbf t_r),[y_1:\dots :y_r]\right) =
\left( (t_{11},\dots, t_{1,\lambda _1-1}\, ;\, \dots \, ;\, t_{r1},\dots, t_{r,\lambda _r-1}),[y_1:\dots :y_r]\right)
\]
is a coordinate on
$(\mathbb C^*)^{n-r}\times \mathbb P^{r-1}$ and the action of $c_j$ is given by
\begin{multline*}
\left( (\mathbf t_1 ; \, \dots \, ; \;\; t_{j1},\dots, t_{j,\lambda_j-1} \;\; ;\, \dots \,; \;
\mathbf t_r ),[y_1:\dots :y_j:\dots :y_r]\right)\\
\mapsto
\left( (\mathbf t_1 ; \dots ; \; t_{j2},\dots, t_{j,\lambda_j-1}, \frac 1{t_{j1}\dots t_{j,\lambda _j-1}} \; ; \dots \,;\; \mathbf t_r),
[y_1:\dots :t_{j1}y_j:\dots :y_r]\right).
\end{multline*}
In particular , if a point on $(\mathbb C^*)^{n-r} \times X(A_{r-1})$ is fixed by $s$,
we necessarily have
\[
t_{j1}=\dots =t_{j,\lambda _j-1}=\alpha _j
\]
for all $1\leqslant j\leqslant r$,
where $\alpha _j$ is a $\lambda _j$-th root of unity. Let us fix such a point
\[
\boldsymbol{\alpha}=(\alpha_1,\dots, \alpha_1;\dots ;\alpha_r,\dots, \alpha_r)
\in (\mathbb C^*)^{n-r}
\]
and $p\in \mathfrak S_r$. Let $U_p\cong \mathbb C^{r-1}$ be the affine open subset
of $X(A_{r-1})$ corresponding to $p$.
At a point $(\boldsymbol\alpha,\xi)\in (\mathbb C^*)^{n-r}\times U_p$, the acton of $c_j$ is given by
\begin{multline}\label{alpha action parts}
(\boldsymbol\alpha, \left(\frac{y_{p(1)}}{y_{p(2)}},\dots ,\frac{y_{p(r-1)}}{y_{p(r)}}\right) )\\
\mapsto
(\boldsymbol\alpha, \left(\frac{y_{p(1)}}{y_{p(2)}},\dots ,
\frac {1}{\alpha _j}\cdot \frac{y_{p(p^{-1}(j)-1)}}{y_j},
\alpha _j \cdot \frac{y_j}{y_{p(p^{-1}(j)+1)}}, \dots,
\frac{y_{p(r-1)}}{y_{p(r)}}\right) ),
\end{multline}
thus the action of $s=c_1\dots c_r$ is of the form
\begin{equation}\label{alpha action}
(\boldsymbol\alpha, \left(\frac{y_{p(1)}}{y_{p(2)}},\dots ,\frac{y_{p(r-1)}}{y_{p(r)}}\right) )\mapsto
(\boldsymbol\alpha, \left(\frac{\alpha_{p(1)}}{\alpha_{p(2)}}\cdot \frac{y_{p(1)}}{y_{p(2)}},\dots ,
\frac{\alpha_{p(r-1)}}{\alpha_{p(r)}}\cdot \frac{y_{p(r-1)}}{y_{p(r)}}\right) ).
\end{equation}
In particular, for the torus invariant point $\xi _p$, the origin of $U_p\cong \mathbb C^{r-1}$,
$(\boldsymbol\alpha, \xi _p)$ is always an $s$-fixed point.

\subsection{}\label{action on the fiber}
Now we consider the action of $s$ on the fiber $\mathbb C^2$ of  $\mathcal O(-D_{pos})\oplus \mathcal O(-D_{neg})$
over a fixed point. In \S\ref{cremona}, we saw that $D_{neg}=g^*\div (x_n)$.
Noting that $n=l_r$ and $s(n)=l_{r-1}+1$, we have
\[
(s^{-1})^* D_{neg}=g^*\div (x_{l_{r-1}+1}) = D_{neg}+\div\left(\frac{x_{l_{r-1}+1}}{x_n}\right),
\]
hence an isomorphism of invertible sheaves
\[
\xymatrix @=50pt {
(s^{-1})^* \mathcal O (-D_{neg})  \ar[r]^{\ \ \ \ \frac{x_{l_{r-1}+1}}{x_n}} & \mathcal O (-D_{neg}).
}
\]
Let us take an $s$-fixed point $(\boldsymbol \alpha,\xi)\in (\mathbb C^*)^{n-r}\times U_p\; (p\in \mathfrak S_r)$ and
set $\nu =p^{-1}(r)$. Then, it is easy to see that
\[
\mathcal O_{(\mathbb C^*)^{n-r}\times U_p}(-D_{neg})
=\frac{y_{p(\nu+1)}}{y_{p(\nu)}} \dots \frac{y_{p(r)}}{y_{p(r-1)}} \mathcal O_{(\mathbb C^*)^{n-r}\times U_p}.
\]
Therefore, for any $\xi \in U_p$,
the action of $s$ on the fiber $\mathcal O(-D_{neg})\otimes \kappa (\boldsymbol \alpha , \xi)$ is given by the composition
\begin{multline*}
\xymatrix {
\mathcal O(-D_{neg})\otimes \kappa (\boldsymbol \alpha , \xi) \ar[r]^{\frac{\alpha_{p(r)}}{\alpha_r}\qquad}
& (s^{-1})^*\mathcal O(-D_{neg})\otimes \kappa (\boldsymbol \alpha , \xi)
} \\
\xymatrix{
\ar[r]^{\alpha _r\qquad\qquad} & \mathcal O(-D_{neg})\otimes \kappa (\boldsymbol \alpha , \xi),
}
\end{multline*}
namely by a multiplication of $\alpha _{p(r)}$. One also sees by the same argument that the action of $s$ on the fiber
$\mathcal O(-D_{pos})\otimes \kappa (\boldsymbol \alpha , \xi)$ is given by the multiplication of $\alpha _{p(1)}^{-1}$.

\begin{lemma}\label{local eigenvalues}
Let $s\in \mathfrak S_n$ as in \eqref{standard lambda element} and take a fixed point
$q=(q_1,q_2)\in \widetilde Z^{(n)}=X(\Delta ^{(n)})\times \mathbb C^n$.
Assume $\eta ^{(n)}(q_1)=(\boldsymbol \alpha, \xi)\in (\mathbb C^*)^{n-r}\times U_p\subset X(\bar \Delta ^{(n)})$.
Then, the eigenvalues of the Jacobian matrix $J_q(s)$ of $s$ at $q$ is
\begin{multline*}
(\underbrace{
\zeta_1,\dots, \zeta_1 ^{\lambda _1-1}; \; \dots\;  ;
\zeta _r, \dots ,\zeta _r ^{\lambda _r-1}
}_{\mbox{\emph{(i)}}}; \ \ \
\underbrace{
\frac{\alpha _{p(1)}}{\alpha _{p(2)}}, \dots , \frac{\alpha _{p(r-1)}}{\alpha _{p(r)}}
}_{\mbox{\emph{(ii)}}}; \\
\underbrace{
\alpha _{p(1)}^{-1}, \alpha _{p(r)}
}_{\mbox{\emph{(iii)}}}; \ \ \
\underbrace{
1, \zeta_1,\dots, \zeta_1 ^{\lambda _1-1}; \; \dots\;  ;
1, \zeta _r, \dots ,\zeta _r ^{\lambda _r-1}
}_{\mbox{\emph{(iv)}}}
).
\end{multline*}
In particular, we always have $\det \left(J_q(s)\right)=1$.
\end{lemma}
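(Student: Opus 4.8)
The plan is to exploit the product and vector-bundle structure of $\widetilde Z^{(n)}$ and to compute the differential of $s$ block by block, reading each block off the explicit local formulas already assembled in \S\ref{s fixed open} and \S\ref{action on the fiber}. Since $\widetilde Z^{(n)}=X(\Delta^{(n)})\times\mathbb C^n$ and, by Proposition \ref{description coxeter}, $\eta^{(n)}:X(\Delta^{(n)})\to X(A_{n-1})$ is the $\mathfrak S_n$-equivariant projection of the total space of $\mathcal O(-D_{pos})\oplus\mathcal O(-D_{neg})$, the tangent space at $q$ carries an $s$-stable filtration whose graded pieces are
\[
T_{q_2}\mathbb C^n,\quad \bigl(\mathcal O(-D_{pos})\oplus\mathcal O(-D_{neg})\bigr)\otimes\kappa(q_1),\quad T_{\eta^{(n)}(q_1)}X(A_{n-1}).
\]
Here the middle piece is the vertical tangent space of $\eta^{(n)}$, which is $J_q(s)$-stable because $s$ covers the base automorphism, and the first factor is stable because $s$ respects the product decomposition of \S\ref{setting}. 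Consequently the spectrum of $J_q(s)$ is the disjoint union of the spectra on these three pieces, and it suffices to diagonalize $s$ on each. The base piece will split further into an $(\mathbb C^*)^{n-r}$-direction and a $U_p\cong\mathbb C^{r-1}$-direction, accounting for the four groups (i)--(iv).

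For the base I would work in the chart $(\mathbb C^*)^{n-r}\times U_p$ around $(\boldsymbol\alpha,\xi)$ with the coordinates $(\mathbf t_1,\dots,\mathbf t_r)$ and $(y_{p(1)}/y_{p(2)},\dots)$ of \S\ref{s fixed open}. The $U_p$-directions are immediate: formula \eqref{alpha action} shows $s$ scales the coordinate $y_{p(i)}/y_{p(i+1)}$ by $\alpha_{p(i)}/\alpha_{p(i+1)}$, so this block is already diagonal and yields (ii). The $\mathbf t_j$-directions decouple into $r$ blocks, one per cycle $c_j$, on which the shift $(t_{j1},\dots,t_{j,\lambda_j-1})\mapsto(t_{j2},\dots,1/(t_{j1}\cdots t_{j,\lambda_j-1}))$ has, at the fixed point, Jacobian equal to the companion matrix whose characteristic polynomial is $1+x+\cdots+x^{\lambda_j-1}$; its eigenvalues are precisely the nontrivial $\lambda_j$-th roots of unity $\zeta_j,\dots,\zeta_j^{\lambda_j-1}$, giving (i). The fiber eigenvalues $\alpha_{p(1)}^{-1}$ and $\alpha_{p(r)}$ of (iii) are exactly what was computed in \S\ref{action on the fiber}. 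Finally, on the $\mathbb C^n$-factor $s$ permutes coordinates by its cycle type, so each $\lambda_j$-cycle contributes all $\lambda_j$-th roots of unity $1,\zeta_j,\dots,\zeta_j^{\lambda_j-1}$, producing (iv).

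The determinant assertion then drops out by multiplying the blocks. The product of the eigenvalues in (i) is $\prod_j(-1)^{\lambda_j+1}$ (the product of the nontrivial $\lambda_j$-th roots of unity) and the product of those in (iv) is the same, so (i) and (iv) together give $\prod_j(-1)^{2(\lambda_j+1)}=1$; meanwhile (ii) telescopes to $\alpha_{p(1)}/\alpha_{p(r)}$ and (iii) gives $\alpha_{p(r)}/\alpha_{p(1)}$, whose product is $1$. I expect the one genuinely non-formal point to be the eigenvalue computation in (i): recognizing the Jacobian of the cyclic shift as a companion matrix and, in particular, checking that its last row is $(-1,\dots,-1)$ \emph{independently} of $\alpha_j$, which uses $\alpha_j^{\lambda_j}=1$ so that $\alpha_j^{\lambda_j-1}=\alpha_j^{-1}$ at the fixed point. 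Everything else is bookkeeping of the already-derived local formulas, the only remaining care being to confirm that the four blocks exhaust all $2n+1=\dim\widetilde Z^{(n)}$ tangent directions, which they do since $(n-r)+(r-1)+2+n=2n+1$.
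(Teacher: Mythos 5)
Your proposal is correct and follows essentially the same route as the paper: blocks (i)--(iv) are obtained exactly as there, namely the companion matrix of the cyclic shift on the $\mathbf t_j$-coordinates with characteristic polynomial $1+t+\dots+t^{\lambda_j-1}$, the diagonal action \eqref{alpha action} on $U_p$, the fiber action on $\mathcal O(-D_{pos})\oplus\mathcal O(-D_{neg})$ from \S\ref{action on the fiber}, and the permutation representation on $\mathbb C^n$. Your added checks (that the last row of the shift Jacobian is $(-1,\dots,-1)$ via $\alpha_j^{\lambda_j}=1$, the telescoping of (ii) against (iii) in the determinant, and the dimension count $2n+1$) are details the paper leaves implicit, and they are all correct.
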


\begin{proof}
The part (i) comes from the action of $s$ on $(\mathbb C^*)^{n-r}$. More precisely,
as we saw in \S\ref{s fixed open}, $s$ acts on $(\mathbb C^*)^{n-r}$ by
\[
(\dots ; \; t_{j1},\dots, t_{j,\lambda_j-1} \; ; \dots )
\mapsto
(\dots ; \; t_{j2},\dots, t_{j,\lambda_j-1}, \frac 1{t_{j1}\dots t_{j.\lambda _j-1}} \; ; \dots ),
\]
therefore the corresponding Jacobian matrix at $t_{ji}=\alpha _j\; (1\leqslant j\leqslant r,\, 1\leqslant i\leqslant \lambda _j-1)$
is a block matrix with components of the form
\[
\begin{pmatrix}
0 &1 & 0 & \cdots & 0\\
0 & 0 & 1 & \cdots & 0 \\
\vdots & \vdots & \vdots & \ddots &\vdots \\
0 & 0 & 0 & \cdots & 1 \\
-1 & -1 & -1 &\cdots & -1
\end{pmatrix},
\]
whose characteristic polynomial is $1+t+\dots +t^{\lambda _j-1}$. This gives the part (i).
The part (ii) comes from the action \eqref{alpha action} of $s$ on $U_p$, and the part (iii) is the action on
the fiber of $\mathcal O(-D_{pos})\oplus \mathcal O(-D_{neg})$ (\S\ref{action on the fiber}).
The part (iv) is the contribution from the permutation representation of $\mathfrak S_n$ on the second factor
$\mathbb C^n$ in $\widetilde Z^{(n)}$.
\end{proof}

\begin{theorem}\label{gorenstein canonical model}
The quotient $Z^{(n)}=\widetilde Z^{(n)}/\mathfrak S_n$ has only Gorenstein canonical quotient singularities
and the singular fiber $(\rho ^{(n)})^{-1}(0)\subset Z^{(n)}$ of the induced family
$\rho ^{(n)}: Z^{(n)}\to B$ is a divisor with $V$-normal crossings
(\cite{S}, Definition (1.16)).
\end{theorem}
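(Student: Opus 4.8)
The plan is to reduce all three assertions to a local analysis at the points of $\widetilde{Z}^{(n)}$ with nontrivial stabilizer and to feed the eigenvalue computation of Lemma \ref{local eigenvalues} into the standard criteria for quotient singularities. Since $\widetilde{Z}^{(n)}=\widetilde{Z}^{(n)\prime}\times \mathbb{C}^n = X(\Delta^{(n)})\times \mathbb{C}^n$ is a smooth toric variety (Proposition \ref{tilde small}) and $\mathfrak{S}_n$ is finite, $Z^{(n)}=\widetilde{Z}^{(n)}/\mathfrak{S}_n$ automatically has quotient singularities. At a point $q$ fixed by $H=\Stab(q)\subset \mathfrak{S}_n$, Cartan's linearization identifies an analytic neighbourhood of the image of $q$ with $\mathbb{C}^{2n+1}/H$, where $H$ acts through its representation on $T_q\widetilde{Z}^{(n)}$, i.e. through the Jacobian matrices $J_q(h)$. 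Thus everything becomes a statement about this linear action of $H$ on $\mathbb{C}^{2n+1}$.

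First I would establish the Gorenstein and canonical properties. Every $h\in H$ is conjugate in $\mathfrak{S}_n$ to a standard element $s_\lambda$ of the form \eqref{standard lambda element}, and conjugation carries $q$ to a fixed point of $s_\lambda$ of the type treated in \S\ref{s fixed open}; since the determinant and the collection of eigenvalues of $J_q(h)$ are conjugation invariant, Lemma \ref{local eigenvalues} gives $\det J_q(h)=1$ for every $h\in H$, so $H\subset SL(2n+1,\mathbb{C})$. In particular $H$ contains no quasi-reflection, a quasi-reflection having a single nontrivial eigenvalue and hence determinant $\neq 1$; Watanabe's criterion then shows $\mathbb{C}^{2n+1}/H$ is Gorenstein. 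For canonicity I would invoke the Reid--Tai criterion: writing the eigenvalues of $h$ of order $m$ as $e^{2\pi\sqrt{-1}\,a_j/m}$ with $0\leqslant a_j<m$, the condition $\det J_q(h)=1$ forces $\sum_j a_j\equiv 0 \pmod m$; as the action is faithful, $h\neq \id$ acts nontrivially on $T_q$, so some $a_j\neq 0$ and therefore the age $\frac1m\sum_j a_j$ is a positive integer, hence $\geqslant 1$. By Reid--Tai this means $\mathbb{C}^{2n+1}/H$ is canonical, and $Z^{(n)}$ has only Gorenstein canonical quotient singularities. (The full eigenvalue list of Lemma \ref{local eigenvalues} is more than is needed here, but it is exactly what \S3 will require.)

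For the central fibre I would work upstairs first. The $\mathfrak{S}_n$-invariant composite $\tilde\rho^{(n)}=\tilde\pi^{(n)}\circ\tilde\mu^{(n)}:\widetilde{Z}^{(n)}\to B$ is toric and factors through the projection to $\widetilde{Z}^{(n)\prime}$; under the identification of $\widetilde{Z}^{(n)\prime}$ with the total space of $\mathcal{O}(-D_{pos})\oplus\mathcal{O}(-D_{neg})$ over $X(A_{n-1})$ of Proposition \ref{description coxeter}, the degeneration parameter $t$ becomes the product $w_+w_-$ of the two fibre coordinates. Consequently $(\tilde\rho^{(n)})^{-1}(0)=D_++D_-$, where $D_\pm=\{w_\pm=0\}$ are the two sub-line-bundles; these are smooth and meet transversally along the zero section, so upstairs the central fibre is a reduced simple normal crossing divisor (the $\mathbb{C}^n$ factor playing no role). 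The two directions normal to the crossing are precisely the eigenvalues of part (iii) of Lemma \ref{local eigenvalues}, on which $H$ acts diagonally; and since $\tilde\rho^{(n)}$ is $\mathfrak{S}_n$-invariant, $H$ preserves $D_+\cup D_-$, at worst interchanging the two branches. As $\widetilde{Z}^{(n)}\to Z^{(n)}$ is a $V$-manifold chart in which the preimage of $(\rho^{(n)})^{-1}(0)$ is this reduced normal crossing divisor, the central fibre is a $V$-normal crossing divisor in the sense of \cite{S}, Definition (1.16).

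The routine parts are the two criteria of the second paragraph, whose only input is $\det J_q(h)=1$. The real work lies in the third: the main obstacle is the explicit toric identification $t=w_+w_-$, that is, verifying that the pairing of the character defining $\tilde\rho^{(n)}$ with every ray generator of $\Delta^{(n)}$ equals $0$ or $1$, so that the upstairs central fibre is reduced and normal crossing, together with checking that the resulting local picture literally satisfies Definition (1.16) of \cite{S} — in particular that reducedness survives the quotient, which is guaranteed exactly by the absence of quasi-reflections established in the second paragraph.
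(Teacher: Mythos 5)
Your first two paragraphs reproduce the paper's argument for the Gorenstein canonical part: the paper reduces everything to the statement that $\Stab_{\mathfrak S_n}(q)\subset SL(T_q\widetilde Z^{(n)})$ for every $q$, which is exactly the last assertion of Lemma \ref{local eigenvalues}, and leaves the Watanabe and Reid--Tai criteria implicit where you spell them out. That part is fine.

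The third paragraph contains a genuine error. It is not true that the degeneration parameter becomes $w_+w_-$ on the total space of $\mathcal O(-D_{pos})\oplus\mathcal O(-D_{neg})$, nor that $(\tilde\rho^{(n)})^{-1}(0)=D_++D_-$. The lattice map defining $\tilde\rho^{(n)\prime}$ is $g=(1\;1\;0\;\dots\;0)$, which in the basis of Proposition \ref{description coxeter} becomes $(1\;0\;\dots\;0\;1)$, and from \eqref{sum of line bundles} one sees that it takes the value $1$ on \emph{every} ray generator of $\Delta^{(n)}$ --- not $0$ on the weight-vector rays. Hence the central fibre upstairs is the reduced union of \emph{all} torus-invariant divisors of $X(\Delta^{(n)})$ (times $\mathbb C^n$): besides the two sub-line-bundles it contains the preimages of all toric boundary divisors of $X(A_{n-1})$. (Already for $n=2$, in the chart $z_{11}=z_{21}v$, $z_{22}=vz_{12}$ one has $t=v\,z_{21}z_{12}$, a product of three coordinates; the fibre over $v=0$ is a third component.) Because of this, your local analysis of the group action on the branches --- ``at worst interchanging the two branches'' of $D_+\cup D_-$ --- breaks down precisely at the points where $\eta^{(n)}(q_1)$ lies on the toric boundary of $X(A_{n-1})$: there the fibre has $2+k$ branches, $k$ being the number of boundary divisors through the point, and the stabilizer permutes them nontrivially (cf.\ the discussion of $D_1\cap\dots\cap D_k$ in \S 4 of the paper). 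The statement you flag as the remaining verification (``pairing with every ray generator equals $0$ or $1$'') would, once carried out, contradict your own description of the fibre. The correct and essentially immediate repair is the paper's argument: in each smooth maximal-cone chart of $\Delta^{(n)}$ the fibre is a reduced union of coordinate hyperplanes, and every $s\in\mathfrak S_n$ acts by a toric (monomial) automorphism preserving that union, which is exactly the local model demanded by Definition (1.16) of \cite{S}.
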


\begin{proof}
$V$-normal crossingness automatically follows from the fact that
the morphism $\widetilde Z^{(n)} = X(\Delta ^{(n)})\times \mathbb C^n\to B$ is toric and
each $s\in \mathfrak S_n$ acts on $\widetilde Z^{(n)}$ by a toric morphism.
Therefore, it is enough to show that for every point $q\in \widetilde Z^{(n)}$,
the stabilizer subgroup $\Stab _{\mathfrak S_n}(q)$ is contained in $SL(T_q\widetilde Z^{(n)})$.
This is equivalent to say that for any $s\in \mathfrak S_n$ and $s$-fixed point $q\in \widetilde Z^{(n)}$,
the determinant of the Jacobian matrix $J_q(s)$ for the action of $s$ at $q$ is 1, which is nothing but
the last assertion of the lemma above.
\end{proof}

\begin{remark}
It follows from the theorem that $X^{(n)}$ also has only Gorenstein canonical singularities
(but not $\mathbb Q$-factorial, unlike $Z^{(n)}$).
Since $\mu ^{(n)}:Z^{(n)}\to X^{(n)}$ is small, and therefore $K_{Z^{(n)}}=\mu ^{(n)\, *}K_{X^{(n)}}$,
it is clear that $X^{(n)}$ has only canonical singularities.
The following argument to show that $X^{(n)}$ is Gorenstein
is suggested by the referee;
Let $D$ be an effective divisor such that $-D$ is $\mu ^{(n)}$-ample (\cite{KM}, Lemma 6.28).
Then, for a sufficiently small positive rational numver $\varepsilon$, the pair $(Z^{(n)}, \varepsilon D)$ is klt
and $\mu ^{(n)}$ is a contraction of $(K_{Z^{(n)}}+\varepsilon D)$-negative extremal curves.
Cone theorem (\cite{KM}, Theorem 3.25) implies that there exists a Cartier divisor $B$ on $X^{(n)}$ such that
$K_{Z^{(n)}}=\mu ^{(n)\, *}B$ since $(K_{Z^{(n)}}\cdot C)=0$ for every curve $C$ that is contracted by $\mu ^{(n)}$.
This shows that $K_{X^{(n)}}\sim _{\mathbb Q} B$
and therefore $K_{X^{(n)}}$ is Cartier.
\end{remark}

\section{stringy $E$-polynomial}

\subsection{}\label{stringy E}
The \emph{stringy $E$-function} is a cohomological invariant defined for varieties with only log terminal singularities.
We review the formula, which can be seen as a definition in our purpose, of the stringy $E$-function for
(global) quotient variety. For the foundation of the theory of stringy $E$-functions, we refer \cite{Ba,Y}.

Let $X$ be a variety. By general theory of mixed Hodge structures, the compact support cohomology
$H_c^k(X,\mathbb Q)$ carries a canonical mixed Hodge structure, and hence the \emph{Hodge number}
$h^{p,q}(H^k_c(X))$ is defined. We define the \emph{$E$-polynomial} $E(X)\in \mathbb Z[u,v]$ of $X$ by
\[
E(X)=\sum _{p,q,k} (-1)^k\ h^{p,q}(H^k_c(X))\ u^pv^q.
\]
As customary, we denote the $E$-polynomial of the affine line $\mathbb A^1$ by $\mathbb L$:
\[
\mathbb L= E(\mathbb A^1)=uv.
\]
If $X$ is a toric variety, $X$ is stratified into tori of various dimensions in Zariski topology, therefore,
$E(X)$ can be written as a polynomial in $\mathbb L$.

Let $M$ be a non-singular algebraic variety of dimension $n$ and $G$ a finite group acting on $M$.
We denote a set of complete representatives of the conjugacy classes of $G$ by $\Conj (G)$.
Let $F_g$ be the locus of $g$-fixed points on $M$ for $g\in \Conj (G)$.  For each point $q\in F_g$,
the Jacobian matrix $J_q(g)$ is diagonalizable. We list its eigenvalues as
\[
\left(e^{2\pi i \theta _1}, \dots, e^{2\pi i \theta _n}\right)\quad (0\leqslant \theta _j< 1),
\]
where $\theta _j$ is a rational number whose denominator is a divisor of the order of $g$.
We define the \emph{age} (or \emph{shift number}) of $g$ at $q$ by
\[
\age (g; q) = \sum _{j=1}^n \theta _j.
\]
The age gives a locally constant function
\[
\age : F_g\to \mathbb Q.
\]
For  $\nu \in \mathbb Q$ , we define $F_{g,\nu}=\age ^{-1} (\nu)$.
The age of $g$ is an integer if and only of $J_q(g)\in SL(T_qM)$.

From now on, let us assume $J_q(g)\in SL(T_qM)$ for all $g\in G$ and $q\in M$, namely
we assume $M/G$ is Gorenstein canonical. We also assume that
the quotient map $M\to M/G$ has no ramification divisor, \emph{i.e.},
$\codim F_g>1$ for any $g\in G$.
Then, the \emph{stringy $E$-polynomial
of the quotient variety} $M/G$ is given by
\begin{equation}\label{orb=st}
E_{st}(M/G) = E(M/G) + \!\!\! \sum _{\substack{\id\neq g\in \Conj(G) \\ \nu \in \mathbb Z}}
\!\!\! E\left(F_{g,\nu}/Z(g)\right)\cdot \mathbb L^{\nu},
\end{equation}
where $Z(g)$ is the centralizer of $g\in G$.
The right hand side is called the \emph{orbifold $E$-function} of $M/G$  (see \cite{Ba}, Definition 6.3),
which is known to be the same as the stringy $E$-function that, in turn, is defined in a quite different way
(\cite{Ba} Theorem 7.5). The summands other than $E(M/G)$ are called \emph{twisted sectors},
while we call $E(M/G)$ the \emph{untwisted sector}.

\subsection{}\label{untwisted sector}
Now,  let us move on to the case $M=\widetilde Z^{(n)}$ and $G=\mathfrak S_n$.
The untwisted sector $E(\widetilde Z^{(n)}/\mathfrak S_n)=E(Z^{(n)})$ can be calculated by
the character formula for the cohomology group $H^*(X(A_{n-1}),\mathbb Q)$ due to Procesi, Dolgachev-Lunts, and Stembridge.

\begin{lemma}
$E(X(A_{n-1})/\mathfrak S_n)=(1+\mathbb L)^{n-1}$.
\end{lemma}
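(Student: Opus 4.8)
The plan is to reduce the computation of the $E$-polynomial of the quotient to extracting the multiplicity of the trivial $\mathfrak S_n$-representation from the cohomology of $X(A_{n-1})$. Since $X(A_{n-1})=X(\bar\Delta^{(n)})$ is a smooth complete toric variety, its rational cohomology is of Hodge--Tate type and concentrated in even degrees, with $H^{2k}(X(A_{n-1}),\mathbb Q)$ pure of type $(k,k)$; in particular $E(X(A_{n-1}))=\sum_k (\dim H^{2k})\,\mathbb L^k$, which is the Eulerian polynomial $\sum_{w\in\mathfrak S_n}\mathbb L^{\mathrm{des}(w)}$, where $\mathrm{des}(w)$ is the number of descents of $w$. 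The $\mathfrak S_n$-action is by algebraic automorphisms, so the transfer isomorphism $H^*_c(X(A_{n-1})/\mathfrak S_n,\mathbb Q)\cong H^*_c(X(A_{n-1}),\mathbb Q)^{\mathfrak S_n}$ is an isomorphism of mixed Hodge structures. As all weights are of Tate type, this yields
\[
E\bigl(X(A_{n-1})/\mathfrak S_n\bigr)=\sum_{k\geqslant 0} m_k\,\mathbb L^k,
\qquad
m_k:=\dim_{\mathbb Q}\bigl(H^{2k}(X(A_{n-1}),\mathbb Q)^{\mathfrak S_n}\bigr),
\]
so that the assertion is equivalent to the purely representation-theoretic identity $m_k=\binom{n-1}{k}$.

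To evaluate $m_k$ I would invoke the description of $\bigoplus_k H^{2k}(X(A_{n-1}),\mathbb Q)$ as a graded $\mathfrak S_n$-module, which is exactly the content of the character formulas of Procesi, Stembridge, and Dolgachev--Lunts \cite{P,Stem,DL}. In terms of the graded Frobenius characteristic $\mathrm{ch}_q:=\sum_k \mathrm{ch}(H^{2k})\,q^k$, the number $m_k$ is the coefficient of $q^k$ in the inner product $\langle \mathrm{ch}_q,\,h_n\rangle$ against the complete homogeneous symmetric function $h_n$, the Frobenius characteristic of the trivial representation. The key step is then the evaluation
\[
\langle \mathrm{ch}_q,\,h_n\rangle=(1+q)^{n-1},
\]
after which substituting $q=\mathbb L$ finishes the proof.

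The heart of the matter, and the only place where genuine work is required, is this last combinatorial evaluation; everything preceding it is formal. As consistency checks, for $n=2$ the variety is $\mathbb P^1$ with $\mathfrak S_2$ acting trivially on $H^0$ and $H^2$, giving $E=1+\mathbb L$, and for $n=3$ the variety is the degree-$6$ del Pezzo surface with Betti numbers $1,4,1$, whose trivial-isotypic parts have dimensions $1,2,1$, giving $(1+\mathbb L)^2$; specialising $\mathbb L\mapsto 1$ one also checks $\chi(X(A_{n-1})/\mathfrak S_n)=2^{n-1}=\tfrac1{n!}\sum_{g}\chi(X(A_{n-1})^g)$, the fixed loci $X(A_{n-1})^g$ being computed from the action of $g$ on $\overline N$.

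As an alternative, more self-contained route to $m_k=\binom{n-1}{k}$, I would exploit the $\mathfrak S_n$-equivariant Cremona resolution $g\colon X(A_{n-1})\to\mathbb P^{n-1}$ recalled in \S\ref{cremona}. Since $\mathfrak S_n$ acts trivially on $H^*(\mathbb P^{n-1})$, one has $E(\mathbb P^{n-1}/\mathfrak S_n)=1+\mathbb L+\cdots+\mathbb L^{n-1}$, and each blow-up $g_i$ has centre an $\mathfrak S_n$-orbit of coordinate linear subspaces whose stabilisers are Young (set-partition) subgroups. The equivariant blow-up formula for $E$-polynomials then expresses $E(X(A_{n-1})/\mathfrak S_n)$ as this value corrected by the $E$-polynomials of the quotients of the exceptional loci; assembling the binomial contributions of the Young-subgroup strata recovers $(1+\mathbb L)^{n-1}$. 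I expect the bookkeeping of these stabilisers and their fixed loci to be the main obstacle along this route, though no individual step is deep.
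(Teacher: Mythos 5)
Your proposal follows essentially the same route as the paper: reduce to the Hilbert series of the $\mathfrak S_n$-invariant part of $H^*(X(A_{n-1}))$ using evenness and Hodge--Tate purity, then appeal to the Procesi--Stembridge--Dolgachev--Lunts description of this cohomology as a graded $\mathfrak S_n$-module. The combinatorial identity $\langle \mathrm{ch}_q,h_n\rangle=(1+q)^{n-1}$ that you single out as the one remaining piece of genuine work is exactly what the paper's citation supplies: Theorem 3.1 of \cite{Stem} gives an explicit basis $\{y_J\}$ of $H^*(X(A_{n-1}))^{\mathfrak S_n}$ indexed by the subsets $J$ of the set of simple roots, with $\deg y_J=2|J|$, whence $\dim H^{2k}(X(A_{n-1}))^{\mathfrak S_n}=\binom{n-1}{k}$ directly.
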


\begin{proof}
This is essentially Theorem 3.1 of \cite{Stem}, which states that the $\mathfrak S_n$-invariant part of the
cohomology ring $H^*(X(A_{n-1}))^{\mathfrak S_n}$ has a basis
$\left\{y_J \; |\; J\subset \{\mbox{ simple roots } \} \right\}$ indexed by all the subset of the set of simple roots.
Moreover the degree of $y_J$ is $2\cdot |J|$. Therefore, $H^{2k}(X(A_{n-1})) ^{\mathfrak S_n}$ is of dimension
$\binom{n-1}{k}$. Since $X(A_{n-1})$ is a smooth toric variety, the whole $H^{2k}$ has Hodge type $(k,k)$
(\cite{CLS} Theorem 12.5.3), and we obtain
\[
E(X(A_{n-1}))/\mathfrak S_n)=\sum _{k=0} ^{n-1} \binom{n-1}{k} (uv)^k = (1+\mathbb L)^{n-1}.
\]
\end{proof}

\begin{proposition}
$E(Z^{(n)})=\mathbb L^{n+2}(1+\mathbb L)^{n-1}$.
\end{proposition}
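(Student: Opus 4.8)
The plan is to reduce the computation to the already–established value $E(X(A_{n-1})/\mathfrak S_n)=(1+\mathbb L)^{n-1}$ by splitting off, one at a time, the two fibre directions of the vector bundle of Proposition \ref{description coxeter} and the $\mathbb C^n$–factor, each of which will contribute a power of $\mathbb L$ that does not see the $\mathfrak S_n$–action. I first recall the transfer principle: for a finite group $G$ acting on a variety $X$ one has $H^k_c(X/G,\mathbb Q)=H^k_c(X,\mathbb Q)^G$ as mixed Hodge structures, so that
\[
E(X/G)=\frac 1{|G|}\sum_{g\in G}E(X;g),\qquad
E(X;g):=\sum_{p,q,k}(-1)^k\,\tr\!\bigl(g\mid H^{p,q}(H^k_c(X))\bigr)\,u^pv^q .
\]
Because the Künneth isomorphism is $G$–equivariant and strictly compatible with mixed Hodge structures, and because $\tr(g\mid V\otimes W)=\tr(g\mid V)\,\tr(g\mid W)$, the quantity $E(-;g)$ is multiplicative for a product of $G$–varieties with the product action. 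Since $\widetilde Z^{(n)}=\widetilde Z^{(n)\prime}\times\mathbb C^n$ with $\mathfrak S_n$ acting factorwise, this gives $E(\widetilde Z^{(n)};g)=E(\widetilde Z^{(n)\prime};g)\cdot E(\mathbb C^n;g)$ for every $g\in\mathfrak S_n$.

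Next I evaluate the two factors. For $\mathbb C^n$ with the permutation action, $H^\ast_c(\mathbb C^n)$ is concentrated in degree $2n$, where it is the one–dimensional Tate structure $\mathbb Q(-n)$; as every $g\in\mathfrak S_n$ acts $\mathbb C$–linearly and hence lies in the connected group $GL_n(\mathbb C)$, it acts trivially on this space, so $E(\mathbb C^n;g)=\mathbb L^n$ for all $g$. For $\widetilde Z^{(n)\prime}$, Proposition \ref{description coxeter} presents $\eta^{(n)}\colon\widetilde Z^{(n)\prime}\to X(A_{n-1})$ as an $\mathfrak S_n$–equivariant, Zariski–locally trivial $\mathbb C^2$–bundle. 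The Leray spectral sequence for compactly supported cohomology therefore collapses to the Thom isomorphism $H^k_c(\widetilde Z^{(n)\prime})\cong H^{k-4}_c(X(A_{n-1}))(-2)$, the coefficient local system $R^4_c\eta^{(n)}_!\mathbb Q$ being the constant sheaf $\mathbb Q(-2)$ since its structure group reduces to the connected group $GL_2(\mathbb C)$. The same connectedness, applied fibrewise over a $g$–fixed point, shows that $g$ acts trivially on the rank–two Thom class, whence $E(\widetilde Z^{(n)\prime};g)=\mathbb L^2\,E(X(A_{n-1});g)$ for every $g$.

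Combining the two computations through multiplicativity yields $E(\widetilde Z^{(n)};g)=\mathbb L^{n+2}\,E(X(A_{n-1});g)$, and averaging over $\mathfrak S_n$ gives
\[
E(Z^{(n)})=\mathbb L^{n+2}\cdot\frac 1{n!}\sum_{g\in\mathfrak S_n}E(X(A_{n-1});g)
=\mathbb L^{n+2}\,E(X(A_{n-1})/\mathfrak S_n)=\mathbb L^{n+2}(1+\mathbb L)^{n-1},
\]
the last equality being the lemma above. I expect the only genuinely delicate point to be the vanishing of the $\mathfrak S_n$–action on the two fibre contributions $H^{2n}_c(\mathbb C^n)$ and on the rank–two Thom class: this is exactly what guarantees that the factors $\mathbb L^n$ and $\mathbb L^2$ are untwisted and independent of $g$, so that the averaging can be pulled past them. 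Both instances reduce to the elementary fact that a $\mathbb C$–linear automorphism of a complex vector space lies in a connected linear group and hence acts as the identity on the one–dimensional top compactly supported cohomology of the fibre; everything else is the formal multiplicativity of $E(-;g)$ under Künneth together with the transfer description of $E(X/G)$.
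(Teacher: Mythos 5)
Your argument is correct and is essentially the paper's own proof: both reduce to the lemma $E(X(A_{n-1})/\mathfrak S_n)=(1+\mathbb L)^{n-1}$ by identifying $H^{*}_c(\widetilde Z^{(n)})$ equivariantly with $H^{*}(X(A_{n-1}))\otimes H^4_c(\mathbb C^2)\otimes H^{2n}_c(\mathbb C^n)$ and observing that the $\mathfrak S_n$-action on the two one-dimensional fibre factors is trivial. Your justification via connectedness of $GL_m(\mathbb C)$ and the explicit transfer/averaging formalism merely makes explicit what the paper states as ``the fundamental class is left invariant by a finite group action.''
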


\begin{proof}
We recall that $\widetilde Z^{(n)} = X(\Delta ^{(n)})\times \mathbb C^n$ and $X(\Delta ^{(n)})$ is
the total space of a rank 2 vector bundle over $X(A_{n-1})$. Poincar\'e duality implies
\[
H^{2k+4}_c(X(\Delta ^{(n)})) \cong H^{2k} (X(A_{n-1})) \otimes H^4_c(\mathbb C^2)
\]
and therefore
\[
H^{2k+2n+4}_c(\widetilde Z^{(n)}) \cong H^{2k} (X(A_{n-1})) \otimes H^4_c(\mathbb C^2)\otimes H^{2n}_c(\mathbb C^n).
\]
The 1-dimensional space $H^{2m}_c(\mathbb C^m)$ is spanned by the fundamental class so that a finite group action
always leaves it invariant. Thus we get
\[
E(\widetilde Z^{(n)}/\mathfrak S_n) = E(X(A_{n-1})) \cdot \mathbb L^{n+2} = \mathbb L^{n+2}(1+\mathbb L)^{n-1}
\]
by the previous lemma.
\end{proof}

\subsection{}
For a subset $M\subset \{1,\dots, r\}$, we define $\mathfrak S M$ to be the subgroup consisting of
elements in $\mathfrak S_r$ that leave each element of $\{1,\dots, r\}\backslash M$ invariant.
Let
\[
\varphi : \{1,\dots, r\}\to T
\]
be a map to a set $T$ and $\{\beta _1,\dots , \beta _k\}$
the set of its values.
Taking the level sets $M(\varphi) _j = \varphi ^{-1}(\beta _j)$\; $(1\leqslant j\leqslant k)$,
we get a partition $M(\varphi)=\{M(\varphi)_i\}$,
\[
\{1,\dots, r\}=\coprod _{j=1}^k  M(\varphi )_j\, ,
\] and
\[
m(\varphi) = (|M(\varphi)_1|,\dots, |M(\varphi )_k|)
\]
is a (not necessarily non-increasing) partition
of $r$. We will call $M(\varphi)$ (or $m(\varphi)$) the \emph{multiplicity partition} of $\varphi$.
We define
\[
\mathfrak S_{M(\varphi)}=\mathfrak SM(\varphi)_1 \times
\dots \times \mathfrak SM(\varphi)_k \subset \mathfrak S_r.
\]
This is a Young subgroup of $\mathfrak S_r$.

\subsection{}
Let $\lambda=(\lambda _1,\dots, \lambda _r)$ be a length $r$ partition of $n$.
It defines a non-increasing map $\lambda : \{1,\dots ,r\}\to \mathbb Z$
by $j\mapsto \lambda _j$, and therefore we have the associated multiplicity partition $M(\lambda)$ and
the Young subgroup $\mathfrak S_{M(\lambda)}$. If $m(\lambda) = (m_1,\dots, m_k)$,
we have
\[
M(\lambda )_j = \left\{\sum _{i=1}^j m_{i-1}+1,\dots, \sum _{i=1}^j m_i\right\},
\]
where $m_0=0$ by convention.
For each partition $\lambda=(\lambda _1,\dots, \lambda _r)$, we define
\[
\Theta _{\lambda}
=\{ \theta=(\theta_1,\dots, \theta _r) \; | \;
\theta _j = \frac{a_j}{\lambda _j},\; a_j\in \mathbb Z,\; 0\leqslant a_j < \lambda _j\}
\]
and call an element $\theta \in \Theta _{\lambda}$ an \emph{angle type} associated with $\lambda$.
We say that an angle type $\theta=(\theta_1, \dots ,\theta _r) \in \Theta _{\lambda}$
is \emph{standard} if
\[
\theta _{\sum _{i=1}^j m_{i-1}+1} \leqslant \dots \leqslant \theta _{\sum _{i=1}^j m_i}
\]
for all $1\leqslant j\leqslant k$.
An angle type $\theta$ also determines a map
\[
\theta : \{1,\dots ,r\}\to \mathbb Q\cap [0,1)
\]
and we have the associated multiplicity partition $M(\theta)$ and the Young subgroup
$\mathfrak S_{M(\theta)}$.

\subsection{}
Each angle type $\theta \in \Theta _{\lambda}$ determines a point
$\boldsymbol\alpha(\theta)\in (\mathbb C^*)^{n-r}$ by
\[
\boldsymbol \alpha (\theta) =
(\underbrace{e^{2\pi i \theta _1}, \dots ,e^{2\pi i \theta _1}}_{\mbox{\scriptsize $(\lambda _1-1)$-times}}\ ;\
\dots \ ;\
\underbrace{e^{2\pi i \theta _r}, \dots ,e^{2\pi i \theta _r}}_{\mbox{\scriptsize $(\lambda _r-1)$-times}}
).
\]
Now let $s=s_{\lambda}\in \mathfrak S_n$ be
the standard element in the conjugacy class determined by $\lambda$, as in \eqref{standard lambda element}.
Let $\overline F_s$ be the set of $s$-fixed points on $X(A_{n-1})$. As we saw in \S\ref{s fixed open},
a point in $\overline F_s$ is of the form $(\boldsymbol \alpha(\theta),\xi)\in (\mathbb C^*)^{n-r}\times X(A_{r-1})
\subset X(A_{n-1})$ for some angle type $\theta \in \Theta _{\lambda}$.
Therefore, if we define
\[
\overline F_s ^{\theta} =\overline  F_s \cap \left(\{\boldsymbol \alpha (\theta)\}\times X(A_{r-1})\right),
\]
then we have $\overline F_s=\coprod _{\theta \in \Theta_{\lambda}} \overline F_s^{\theta}$.
We naturally identify $\overline F_s^{\theta}$ with the corresponding closed subset of $X(A_{r-1})$.

The centralizer $Z(s)$ for $s\in \mathfrak S_n$ is generated by
the cyclic permutations $c_1,\dots, c_r$ in the notation of \eqref{standard lambda element}
and the permutations of the cycles of the same length among $\{c_1,\dots ,c_r\}$
(see \cite{Sa}, Proposition 1.1.1). The subgroup $H(s)$ generated by $c_1,\dots, c_r$ in $\mathfrak S_n$
is a normal subgroup of $Z(s)$ and we have $Z(s)/H(s)\cong \mathfrak S_{M(\lambda)}$.

\begin{lemma}
Let $V\subset \overline F_s$ be a union of connected components that is invariant under the action of
the centralizer subgroup $Z(s)$.
Then, $Z(s)$ acts on the cohomology group $H^*(V)$ via a natural action of $\mathfrak S_{M(\lambda)}$,
namely $H^*(V/Z(s))\cong H^*(V/\mathfrak S_{M(\lambda)})$.
\end{lemma}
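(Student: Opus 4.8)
The plan is to reduce everything to the single fact that the normal subgroup $H(s)=\langle c_1,\dots, c_r\rangle$ acts trivially on the cohomology $H^*(V)$. Granting this, the $Z(s)$-action on $H^*(V)$ automatically factors through the quotient $Z(s)/H(s)\cong\mathfrak S_{M(\lambda)}$, and the asserted isomorphism $H^*(V/Z(s))\cong H^*(V/\mathfrak S_{M(\lambda)})$ follows from the transfer isomorphism $H^*(W/G;\mathbb Q)\cong H^*(W;\mathbb Q)^G$, valid for any finite group $G$ acting on a variety $W$ (rational coefficients being implicit throughout the $E$-polynomial discussion).

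First I would understand how a single generator $c_j$ acts on $\overline F_s$. Since $\overline F_s=\coprod_{\theta\in\Theta_\lambda}\overline F_s^{\theta}$ is a disjoint union over angle types, with $\overline F_s^{\theta}\subset\{\boldsymbol\alpha(\theta)\}\times X(A_{r-1})$, I would first check that $c_j$ preserves each slice $\overline F_s^{\theta}$. This is immediate from the coordinate description in \S\ref{s fixed open}: on the $(\mathbb C^*)^{n-r}$-factor, $c_j$ sends the tuple $(t_{j1},\dots, t_{j,\lambda_j-1})=(\alpha_j,\dots,\alpha_j)$ to $(\alpha_j,\dots,\alpha_j,\alpha_j^{-(\lambda_j-1)})$, which equals $(\alpha_j,\dots,\alpha_j)$ because $\alpha_j^{\lambda_j}=1$; hence $\boldsymbol\alpha(\theta)$ is fixed and the slice is preserved.

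Next I would read off from \eqref{alpha action parts} that on the remaining factor $X(A_{r-1})$ the map $c_j$ is precisely the scaling $[y_1:\dots:y_j:\dots:y_r]\mapsto[y_1:\dots:\alpha_j y_j:\dots:y_r]$, i.e.\ an element $\tau_j$ of the big torus $T$ of $X(A_{r-1})$. The slice $\overline F_s^{\theta}$ is itself the fixed locus of the torus element realizing $s$ on $X(A_{r-1})$ (compare \eqref{alpha action}), hence it is a union of torus-invariant subvarieties and is stable under all of $T$. Because $T\cong(\mathbb C^*)^{r-1}$ is connected, I can choose a path from $\tau_j$ to the identity inside $T$; evaluating the $T$-action along this path yields a homotopy on $\overline F_s^{\theta}$ from $\tau_j$ to the identity map. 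Therefore $\tau_j$, and with it $c_j$, acts as the identity on $H^*(\overline F_s^{\theta})$. Summing over slices, $c_j$ acts trivially on $H^*(\overline F_s)$, and restricting to the components comprising $V$ shows that $H(s)$ acts trivially on $H^*(V)$.

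To finish, I would use that $\mathfrak S_{M(\lambda)}$ is realized inside $Z(s)$ as the subgroup permuting the cycles of equal length; this subgroup preserves the $Z(s)$-stable set $V$ and provides the geometric $\mathfrak S_{M(\lambda)}$-action on $V$. Combining the transfer isomorphism with the triviality of the $H(s)$-action gives
\[
H^*(V/Z(s))\cong H^*(V)^{Z(s)}=\bigl(H^*(V)^{H(s)}\bigr)^{\mathfrak S_{M(\lambda)}}=H^*(V)^{\mathfrak S_{M(\lambda)}}\cong H^*(V/\mathfrak S_{M(\lambda)}).
\]
I expect the genuine content to lie in the third paragraph: one must verify that $c_j$ acts on each fixed slice \emph{through the connected torus} $T$, rather than by an automorphism that could permute components or act nontrivially on cohomology. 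Once this torus-action description is secured, connectedness of $T$ makes the cohomological triviality essentially automatic, and the remainder of the argument is formal.
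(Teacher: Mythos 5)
Your proof is correct, and its skeleton coincides with the paper's: both arguments reduce the lemma to showing that the normal subgroup $H(s)=\langle c_1,\dots,c_r\rangle$ acts trivially on $H^*(V)$ (so that the $Z(s)$-action factors through $Z(s)/H(s)\cong\mathfrak S_{M(\lambda)}$ and the claim follows by taking invariants in stages via the transfer isomorphism), and both do so by observing that $H(s)$ fixes $\boldsymbol\alpha(\theta)$ and acts on each slice $\overline F_s^{\theta}$ through the big torus of $X(A_{r-1})$. The one place you genuinely diverge is the justification that a torus element acts trivially on cohomology. You argue by homotopy: after checking that $\overline F_s^{\theta}$, being the fixed locus of a torus element, is stable under the whole connected torus $T$ (so each connected component is preserved and the homotopy stays inside $V$), connectedness of $T$ gives a path from $\tau_j$ to the identity and hence triviality on $H^*$. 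The paper instead cites the structure of toric cohomology: $H^*(\overline F_s^{\theta}\cap V)$ is generated by fundamental classes of torus-invariant subvarieties of a smooth projective toric variety (\cite{CLS}, Lemma 12.5.1 and Theorem 12.5.3), and these classes are fixed by any torus element. Your version is slightly more elementary and more general --- it needs only that the finite subgroup sits inside a connected group acting on $V$, not the identification of the components as toric varieties --- while the paper's version is an immediate citation once Proposition \ref{connected components of fixed point locus} is in place. You have also correctly isolated the actual content of the proof, namely that $c_j$ restricted to each slice is the diagonal scaling $y_j\mapsto\alpha_j y_j$, i.e.\ an element of the torus, which is exactly what the paper's proof rests on as well.
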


\begin{proof}
The permutations of the cycles of the same length in $\{c_1,\dots, c_r\}$ act on
$\mathbb P^{r-1}$ through corresponding permutations of homogeneous coordinates $[y_1:\dots :y_r]$,
and accordingly they naturally act on $X(A_{r-1})$.
The action is identified with the natural action of $\mathfrak S_{M(\lambda)}\subset \mathfrak S_r$.
Therefore, it is sufficient to show that the subgroup $H(s)$ acts on $H^*(V)$ trivially.
$H(s)$ leaves $\alpha (\theta)$ invariant and it acts on the torus invariant closed subset $\overline F_s^{\theta}$
as a finite subgroup of the open dense torus associated to $N^{\langle s\rangle}$ of $X(A_{r-1})$.
$H(s)$ also leaves $H^*(\overline F_{s}^{\theta}\cap V)$ invariant, since the cohomology group
$H^*(\overline F_{s}^{\theta}\cap V)$ is generated by the fundamental cycles of torus invariant
closed subvarieties of a smooth projective toric variety $\overline F_{s}^{\theta}\cap V$ (see
\cite{CLS}, Lemma 12.5.1 and Theorem 12.5.3).
\end{proof}

\subsection{}
Let $\varphi :\{1, \dots , r\}\to T$ be a map. We define the adjacency function
\[
\adj(\varphi):\{1, \dots, r\}\to \mathbb Z
\] of $\varphi$ inductively by
\[
\adj(\varphi)(1)=1,\quad
\adj(\varphi)(j)=
\begin{cases}
\adj(\varphi)(j-1) & \mbox{if } \varphi (j)=\varphi (j-1) \\
\adj(\varphi)(j-1)+1 & \mbox{if } \varphi (j)\neq \varphi (j-1)
\end{cases} .
\]
Let $\lambda$ be a length $r$ partition of $n$, $\theta\in \Theta _{\lambda}$ an angle type, and $p\in \mathfrak S_r$.
We define a function
\[
\theta \star p  : \{1,\dots, r\} \to \mathbb Z
\]
by $\theta\star p=\adj(\theta \circ p)\circ p^{-1}$ and let $M(\theta\star p)$ be the corresponding level set
partition of $\{1,\dots, r\}$. The partition defines a Young subgroup
$\mathfrak  S_{M(\theta \star p)}\subset \mathfrak S_r$.
As $M(\theta \star p)$ is a refinement of the partition $M(\theta)$,
$\mathfrak S_{M(\theta\star p)}$ is a subgroup of $\mathfrak S_{M(\theta)}$.
Let $\bar p =\mathfrak S_{M(\theta \star p)} p$ be the right coset in $\mathfrak S_r$
and $P(\theta)=\{\bar p \; |\;  p\in \mathfrak S_r\}$.
Then, it is easy to see that $P(\theta)$ is a partition of $\mathfrak S_r$.
We define
\[
\tau _{\bar p} = \bigcap _{q\in \bar p} q \bar \delta ^{(r)},
\]
where $\bar \delta ^{(r)}$ is the positive Weyl chamber of the $A_{r-1}$-root system
as in \S\ref{chart coordinate}.

\begin{proposition}\label{connected components of fixed point locus}
The set of connected components of $\overline F_s^{\theta}$ agrees with
the set of orbit closures $\{V(\tau _{\bar p})\; |\; \bar p\in P(\theta)\}$. Moreover, if
$m(\theta \star p) = (r_1,\dots, r_k)$, then
$V(\tau _{\bar p}) \cong X(A_{r_1-1})\times \dots \times X(A_{r_k-1})$.
\end{proposition}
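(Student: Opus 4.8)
The plan is to identify $\overline F_s^{\theta}$ with the fixed locus of a single torus element on $X(A_{r-1})$ and then to read off its components, and their structure, from the combinatorics of the braid fan $\bar\Delta^{(r)}$. Once $\theta$ is fixed the point $\boldsymbol\alpha(\theta)\in(\mathbb C^*)^{n-r}$ is fixed, and by \S\ref{s fixed open} the residual action of $s$ on $\{\boldsymbol\alpha(\theta)\}\times X(A_{r-1})\cong X(A_{r-1})$ is given by \eqref{alpha action}: in the chart $U_p$ it multiplies the coordinate $y_{p(j)}/y_{p(j+1)}$ by $e^{2\pi i(\theta_{p(j)}-\theta_{p(j+1)})}$, which is exactly translation by the torus element $g_\theta=[e^{2\pi i\theta_1}:\dots:e^{2\pi i\theta_r}]$ of the big torus of $X(A_{r-1})$. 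As a torus element preserves every invariant affine chart, $s$ acts on all of $X(A_{r-1})$ as $g_\theta$, and the last paragraph of \S\ref{s fixed open} gives $\overline F_s^{\theta}=X(A_{r-1})^{g_\theta}$.

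Next I would describe this fixed locus cone by cone. For any torus element $g$ of a toric variety $X(\Sigma)$, a point of an orbit $O(\sigma)$ is $g$-fixed if and only if $g$ lies in the subtorus $T_\sigma\subset T$ generated by $\sigma$, and then all of $O(\sigma)$ is fixed; hence $X(\Sigma)^g=\bigsqcup_{\sigma:\,g\in T_\sigma}O(\sigma)$, and since $T_\tau\subset T_\sigma$ whenever $\tau\preceq\sigma$, its connected components are the orbit closures $V(\tau)$ for the \emph{minimal} cones $\tau$ with $g\in T_\tau$ (cf. \cite{CLS}, \S3.2). Computing in $U_p$, the fixed locus there is cut out by $y_{p(j)}/y_{p(j+1)}=0$ precisely for the $j$ with $\theta_{p(j)}\neq\theta_{p(j+1)}$, i.e. at the value-changes of the sequence $\theta\circ p$; equivalently, for a face $\tau\preceq p\bar\delta^{(r)}$ one has $g_\theta\in T_\tau$ if and only if $\theta$ is constant on every block of the ordered set partition attached to $\tau$ (in the dictionary recalled below).

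I would then match these minimal cones with the $\tau_{\bar p}$ using the standard dictionary for $\bar\Delta^{(r)}$, under which cones correspond to ordered set partitions of $\{1,\dots,r\}$, a chamber $q\bar\delta^{(r)}$ is the total order $q$, a face is a merging of consecutive blocks, and the face common to a family of chambers is the coarsest partition all of them refine. By the definition $\theta\star p=\adj(\theta\circ p)\circ p^{-1}$, the runs of constant value of $\theta\circ p$ are exactly the blocks $M(\theta\star p)_1,\dots,M(\theta\star p)_k$ in their natural order, so the face of $p\bar\delta^{(r)}$ cut out above is the cone of this ordered set partition; two chambers share it iff they differ by an element of $\mathfrak S_{M(\theta\star p)}$, whence it equals $\tau_{\bar p}=\bigcap_{q\in\bar p}q\bar\delta^{(r)}$. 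Minimality forces adjacent blocks to carry distinct values of $\theta$, while conversely any ordered set partition that is $\theta$-constant on its blocks and $\theta$-separated across adjacent blocks is realised by listing its blocks in order as some $p$. Thus the minimal fixed cones are exactly the $\tau_{\bar p}$ for $\bar p\in P(\theta)$, distinct cosets yielding distinct and hence disjoint components and every component arising this way, which is the first assertion.

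For the product structure I would use $V(\tau_{\bar p})\cong X(\mathrm{Star}(\tau_{\bar p}))$, the toric variety of the star fan in $\overline N/\overline N_{\tau_{\bar p}}$. Realising $\overline N$ as $\mathbb Z^r/\mathbb Z\mathbf 1$ (as in \S\ref{cremona}) and writing the blocks as $B_1,\dots,B_k$ with $|B_i|=r_i$, the sublattice $\overline N_{\tau_{\bar p}}$ is spanned by the block-constant vectors, so $\overline N/\overline N_{\tau_{\bar p}}\cong\bigoplus_{i=1}^k(\mathbb Z^{B_i}/\mathbb Z\mathbf 1_{B_i})$; moreover the cones containing $\tau_{\bar p}$ are exactly the independent refinements of the individual blocks, so their images assemble into the product fan $\bar\Delta^{(r_1)}\times\dots\times\bar\Delta^{(r_k)}$, giving $V(\tau_{\bar p})\cong X(A_{r_1-1})\times\dots\times X(A_{r_k-1})$ with $(r_1,\dots,r_k)=m(\theta\star p)$. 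The main obstacle is the combinatorial bookkeeping of the third step --- checking that the intersection of chambers defining $\tau_{\bar p}$ is the face spanned by the value-change rays and that $P(\theta)$ indexes the minimal fixed cones bijectively and without overlap; the torus-fixed-locus formalism and the product decomposition of the star fan are then essentially formal.
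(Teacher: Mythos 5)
Your proposal is correct and follows essentially the same route as the paper: both identify the action of $s$ on the slice $\{\boldsymbol\alpha(\theta)\}\times X(A_{r-1})$ with translation by a torus element, detect the fixed faces of each chamber $p\bar\delta^{(r)}$ via the condition $\theta_{p(j)}=\theta_{p(j+1)}$, identify the components as the orbit closures $V(\tau_{\bar p})$ for $\bar p\in P(\theta)$, and obtain the product structure from the star (image) fan of $\tau_{\bar p}$ being a product of type-$A$ Coxeter fans. The only difference is presentational: you invoke the general description of the fixed locus of a torus element on a smooth toric variety (components are the orbit closures of the minimal cones whose associated subtorus contains $g_\theta$), whereas the paper establishes the same two facts directly in the chart coordinates, by showing $O(\tau_{\bar p})$ is pointwise fixed and that every component is torus-invariant and contains some torus-fixed point $\xi_p$.
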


\begin{proof}
A codimension one face of $p\bar \delta ^{(r)}$ is cut out by a hyperplane of invariant vectors under
a transposition $(p(j)\;\; p(j+1))$ for some $1\leqslant j<r$.
The face corresponds to an affine line with coordinate
$y_{p(j)}/y_{p(j+1)}$. Taking the action \eqref{alpha action} into account,
this affine line consists of $s$-fixed point if and only if $\theta _{p(j)}=\theta _{p(j+1)}$.
On the other hand, it is easy to verify that
\[
\mathfrak S_{M(\theta \star p)}=\left\langle
(p(j)\;\; p(j+1))\; | \; \theta _{p(j)}=\theta _{p(j+1)}\right\rangle \subset \mathfrak S_r
\]
and if $q\in \bar p$, then $\mathfrak S_{M(\theta \star q)}=\mathfrak S_{M(\theta \star p)}$ as
subgroups of $\mathfrak S_r$. By construction, the orbit $O(\tau _{\bar p})$ is a torus with the coordinates
\[
\left\{\frac{y_{p(j)}}{y_{p(j+1)}}\; \bigg| \; \theta _{p(j)}=\theta _{p(j+1)}\right\}.
\]
In particular, every point in $O(\tau _{\bar p})$, and therefore of $V(\tau _{\bar p})$, is fixed by $s$,
namely $V(\tau _{\bar p})$ is a connected component of $\overline F_s^{\theta}$.

On the other hand, since $s$ acts on $X(A_{r-1})$ via a cyclic subgroup of the torus, every connected
component $V$ of $\overline F_s^{\theta}$ is a torus invariant closed subset of $X(A_{r-1})$.  In particular,
$V$ contains a torus invariant point $\xi _p$ corresponding to a maximal cone $p\bar \delta ^{(r)}$ for
some $p\in \mathfrak S_r$. The above argument shows that $V(\tau _{\bar p})$ is
the connected component of $\overline F_s^{\theta}$ containing $\xi _p$. Hence we know that $V=V(\tau _{\bar p})$.

The image fan on
$\overline N^{\langle s\rangle} / \left(\langle (\tau _{\bar p})\rangle _{\mathbb R}\cap \overline N^{\langle s \rangle}\right)$
corresponding to $V(\tau _{\bar p})$ is
the Coxeter complex of the root system corresponding to the Young subgroup
$\mathfrak S_{M(\theta \star p)}\cong \mathfrak S_{r_1}\times \dots \times \mathfrak S_{r_k}$,
where $m(\theta \star p)=(r_1,\dots, r_k)$.
It immediately follows that $V(\tau _{\bar p})\cong X(A_{r_1-1})\times \dots \times X(A_{r_k-1})$.
\end{proof}

\subsection{}\label{definition of F}
We denote the connected component $V(\tau _{\bar p})\subset \overline F_s^{\theta}$ by
$\overline F_s^{\theta ,\bar p}$.
Let $F_s$ be the closed subset of $s$-fixed points on $X(\Delta ^{(n)})$ and
$F_s^{\theta,\bar p}\subset X(\Delta ^{(n)})$ the union of connected components
of $F_s$ that map to $\overline F_s^{\theta, \bar p}$.
If we define
$\phi(\theta, \bar p)$ to be the number of $0$ in the set $\{\theta _{p(1)},\, \theta _{p(r)}\}$,
the calculation in \S\ref{action on the fiber} implies that
$F_s^{\theta ,\bar p}\to \overline F_s^{\theta ,\bar p}$ is a vector bundle of rank $\phi (\theta, \bar p)$.
In particular $F_s^{\theta ,\bar p}$ is connected. We know that the $s$-fixed point locus of $\widetilde Z^{(n)}$
is a disjoint union of $F_s^{\theta ,\bar p}\times (\mathbb C^n)^{\langle s\rangle}\subset X(\Delta ^{(n)})\times \mathbb C^n
= \widetilde Z^{(n)}$.

\begin{lemma}\label{value of age}
The value of the age function at a point in $F_s^{\theta ,\bar p} \times (\mathbb C^n)^{\langle s\rangle}$ is
given by
\begin{equation}\label{age function}
a(s;\theta, \bar p) = n -r + \sum _{j=1}^{r-1} \left\{ \theta _{p(j+1)} - \theta _{p(j)} \right\}
+ \{1-\theta _{p(1)}\}+ \theta _{p(r)},
\end{equation}
where $\{t\}=t-\lfloor t \rfloor$ is the fractional part of $t$.
\end{lemma}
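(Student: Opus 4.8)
The plan is to read off the full list of $2n+1=\dim\widetilde Z^{(n)}$ eigenvalues of the Jacobian $J_q(s)$ from Lemma~\ref{local eigenvalues} and to sum the corresponding normalized angles, since by definition $\age(s;q)=\sum_j\theta_j$ when the eigenvalues are written as $e^{2\pi i\theta_j}$ with $\theta_j\in[0,1)$. Because the age is locally constant and $F_s^{\theta,\bar p}\times(\mathbb C^n)^{\langle s\rangle}$ is connected (\S\ref{definition of F}), it is enough to evaluate at one convenient point, for instance over the torus-fixed point $\xi_p$; one then checks that the resulting number is independent of the representative $p$ of the coset $\bar p$.

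First I would dispose of the two root-of-unity blocks (i) and (iv). For the $j$-th cycle these contribute the eigenvalues $\zeta_j^k=e^{2\pi ik/\lambda_j}$ for $1\leqslant k\leqslant\lambda_j-1$, together with the eigenvalue $1$ (angle $0$) in block (iv). Each $k/\lambda_j$ already lies in $[0,1)$, so no normalization is needed, and the elementary identity $\sum_{k=1}^{\lambda_j-1}k/\lambda_j=(\lambda_j-1)/2$ shows that each of (i) and (iv) contributes $\sum_{j=1}^{r}(\lambda_j-1)/2=(n-r)/2$. Together they produce the integer $n-r$ in \eqref{age function}.

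Next I would treat blocks (ii) and (iii), which carry the dependence on the angle type. Writing $\alpha_k=e^{2\pi i\theta_k}$ with $\theta_k\in[0,1)$, the ratio eigenvalue $\alpha_{p(j)}/\alpha_{p(j+1)}$ of block (ii) equals $e^{2\pi i(\theta_{p(j)}-\theta_{p(j+1)})}$, so its normalized angle is the fractional part of the relevant adjacent difference; summing over $j=1,\dots,r-1$ yields the adjacent-difference terms of \eqref{age function}. The two fiber eigenvalues $\alpha_{p(1)}^{-1}$ and $\alpha_{p(r)}$ of block (iii) have normalized angles $\{1-\theta_{p(1)}\}$ and $\theta_{p(r)}$, the fractional part automatically returning $0$ exactly when $\theta_{p(1)}$ or $\theta_{p(r)}$ vanishes, in agreement with those directions being fixed and counted by $\phi(\theta,\bar p)$ in \S\ref{definition of F}. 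Adding the four contributions gives the asserted value.

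The computation is essentially bookkeeping once Lemma~\ref{local eigenvalues} is in hand, the only delicate point being the passage to fractional parts. I expect the main (and still minor) obstacle to be the well-definedness on the connected component, namely independence of the chosen representative $p\in\bar p$: replacing $p$ by an element of $\mathfrak S_{M(\theta\star p)}p$ only permutes indices inside blocks of equal $\theta$-value, so every adjacent difference that changes is one between equal angles, contributing $0$ either way, while the endpoint values $\theta_{p(1)}$ and $\theta_{p(r)}$ are unaffected. As a final consistency check one verifies that the non-integer contributions of (ii) and (iii) telescope and cancel, matching the integrality of the age forced by $\det J_q(s)=1$ in Lemma~\ref{local eigenvalues}.
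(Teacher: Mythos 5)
Your proposal follows exactly the paper's route: the paper's proof is precisely this bookkeeping from Lemma \ref{local eigenvalues}, with only the computation $\sum_j\sum_{i=1}^{\lambda_j-1}2\cdot\frac{i}{\lambda_j}=n-r$ for blocks (i) and (iv) written out, the contributions of (ii) and (iii) being left implicit. Your treatment of the blocks, of the degenerate cases $\theta_{p(1)}=0$ or $\theta_{p(r)}=0$, and of independence of the representative $p\in\bar p$ is all correct.

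One point you pass over too quickly, and it is worth flagging: the normalized angle of the block (ii) eigenvalue $\alpha_{p(j)}/\alpha_{p(j+1)}=e^{2\pi i(\theta_{p(j)}-\theta_{p(j+1)})}$ is $\{\theta_{p(j)}-\theta_{p(j+1)}\}$, whereas the displayed formula \eqref{age function} has $\{\theta_{p(j+1)}-\theta_{p(j)}\}$; these agree only when the difference is an integer or a half-integer. Your own final consistency check detects this: with the printed signs the arguments of the fractional parts sum to $1+2\theta_{p(r)}-2\theta_{p(1)}$, so the total need not be an integer (for $\lambda=(3,1)$, $\theta=(1/3,0)$, $p=\mathrm{id}$ the printed formula gives $2+\{-1/3\}+\{2/3\}+0=10/3$, while the table in the proof of Proposition \ref{stringy E-poly for small n} records $a=3$, which is what $\{\theta_{p(j)}-\theta_{p(j+1)}\}$ yields). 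So the version you actually derive is the correct one and the statement of the lemma contains a sign typo; your phrase ``yields the adjacent-difference terms of \eqref{age function}'' silently identifies two expressions that differ in general, and you should state the corrected formula explicitly rather than assert agreement with the printed one.
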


\begin{proof}
This is just a consequence of Lemma \ref{local eigenvalues}, noting that the contribution from
the parts (i) and (iv) sum up to
\[
\sum _{j=1}^r\sum _{i=1}^{\lambda _j-1} 2\cdot \frac{i}{\lambda _j}
=\sum _{j=1}^r \lambda _j-1 = n -r .
\]
\end{proof}

\subsection{}
Let $\Theta _{\lambda} ^{st}$ be the set of standard angle types.
$\mathfrak S_{M(\lambda)} \subset \mathfrak S_r$ acts on the set of angle types $\Theta _{\lambda}$
by permutation of the factors and each orbit contains a unique
standard element $\theta$. Therefore,
there is an identification $\Theta _{\lambda} / \mathfrak S_{M(\lambda)}\cong \Theta _{\lambda}^{st}$.
The stabilizer subgroup of $\theta$ is $\mathfrak S_{M(\lambda)}\cap \mathfrak S_{M(\theta)}$.
Therefore, we get
\[
\overline F_s / \mathfrak S_{M(\lambda)}
= \left(\coprod _{\theta \in \Theta _{\lambda}} \overline F_s^{\theta} \right) / \mathfrak S_{M(\lambda)}
\cong
\coprod _{\theta \in \Theta _{\lambda}^{st}} \overline F_s^{\theta} / (\mathfrak S_{M(\lambda)}\cap \mathfrak S _{M(\theta)}).
\]
Let $\theta \in \Theta _{\lambda}^{st}$. Then, Proposition \ref{connected components of fixed point locus} says
that we have a decomposition into connected components
\[
\overline F_{s}^{\theta} = \coprod _{\bar p\in P(\theta)} \overline F_s^{\theta ,\bar p},
\]
where $P(\theta)=\{\bar p = \mathfrak S_{M(\theta \star p)} p\; | \; p\in \mathfrak S_r\}$,
and the isomorphism class of
$\overline F_s^{\theta ,\bar p}=V(\tau _{\bar p})$ is determined only by the multiplicity partition $m(\theta \star p)$.
$\mathfrak S_{M(\lambda)}\cap \mathfrak S_{M(\theta)}$ naturally acts on $P(\theta)$.
Let $\overline P(\theta)$ be a complete system of representative for the quotient set
$P(\theta)/(\mathfrak S_{M(\lambda)}\cap \mathfrak S_{M(\theta)})$. Then, we get
\[
\overline F_s^{\theta} /(\mathfrak S_{M(\lambda)}\cap \mathfrak S_{M(\theta)})
\cong \coprod _{\bar p\in \overline P(\theta)} \overline F_s^{\theta ,\bar p}
/(\mathfrak S_{M(\lambda)}\cap \mathfrak S_{M(\theta \star p)}).
\]

Combining what we have obtained above, finally we get the following

\begin{theorem}\label{formula for stringy E-polynomial}
We keep the notation above.
The stringy $E$-polynomial of $Z^{(n)}$ is given by the following formula:
\begin{multline}\label{the formula}
E_{st}(Z^{(n)}) = \mathbb L^{n+2}(1+\mathbb L)^{n-1} \\
+ \sum _{\substack{\lambda \vdash n \\ \lambda \neq (1^n)}}\sum _{\theta \in \Theta _{\lambda}^{st}}
\sum _{\bar p \in \overline P(\theta)}
E(\overline F _{s_{\lambda}} ^{\theta,\overline p}/(\mathfrak S_{M(\lambda)}\cap \mathfrak S_{M(\theta\star p)}))
\cdot \mathbb L^{\phi (\theta , \bar p)+r(\lambda)+a(s; \theta,\bar p)},
\end{multline}
where $s_\lambda$ is the standard permutation associated with $\lambda$ as in \eqref{standard lambda element}
and $r(\lambda)$ stands for the length of the partition $\lambda$.
\end{theorem}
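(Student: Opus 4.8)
The plan is to apply the orbifold $E$-polynomial formula \eqref{orb=st} to the global quotient $Z^{(n)}=\widetilde Z^{(n)}/\mathfrak S_n$. This is legitimate: Theorem \ref{gorenstein canonical model} guarantees $J_q(s)\in SL(T_q\widetilde Z^{(n)})$ for every $s$ and every fixed point $q$, so $Z^{(n)}$ is Gorenstein canonical, and the fixed loci have codimension at least two (as in Proposition \ref{Q-Gorenstein}, using that $\tilde\mu^{(n)}$ is small and hence an isomorphism in codimension one), so there is no ramification divisor. The untwisted sector $E(Z^{(n)})=\mathbb L^{n+2}(1+\mathbb L)^{n-1}$ is exactly the proposition preceding this one, giving the first summand of \eqref{the formula}. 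Since $\Conj(\mathfrak S_n)$ is indexed by partitions $\lambda\vdash n$ with the identity corresponding to $(1^n)$, I would range the twisted sum over $\lambda\neq(1^n)$ and use the standard representative $s=s_\lambda$ of \eqref{standard lambda element}, whose centralizer $Z(s)$ I must divide by.

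First I would read off the fixed locus and its age stratification. By \S\ref{definition of F}, the $s$-fixed locus of $\widetilde Z^{(n)}=X(\Delta^{(n)})\times\mathbb C^n$ is the disjoint union $\coprod_{\theta,\bar p}F_s^{\theta,\bar p}\times(\mathbb C^n)^{\langle s\rangle}$, where $F_s^{\theta,\bar p}\to\overline F_s^{\theta,\bar p}$ is a vector bundle of rank $\phi(\theta,\bar p)$ and $(\mathbb C^n)^{\langle s\rangle}\cong\mathbb C^{r(\lambda)}$. Lemma \ref{value of age} shows the age is constant on each piece with value $a(s;\theta,\bar p)$; hence grouping the sum over $\nu\in\mathbb Z$ in \eqref{orb=st} by this value rewrites it as a sum over the pairs $(\theta,\bar p)$, each carrying a factor $\mathbb L^{a(s;\theta,\bar p)}$.

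Next I would perform the centralizer quotient. Because $Z(s)$ preserves the age and permutes the connected components of the fixed locus, $F_{s,\nu}/Z(s)$ is the disjoint union of quotients of $Z(s)$-orbits of components by their stabilizers. The lemma asserting that $H(s)$ acts trivially on the cohomology of these components lets me replace each $Z(s)$-quotient by the corresponding $\mathfrak S_{M(\lambda)}$-quotient, and the orbit decomposition established just before the statement,
\[
\overline F_s/\mathfrak S_{M(\lambda)}\cong\coprod_{\theta\in\Theta_\lambda^{st}}\coprod_{\bar p\in\overline P(\theta)}\overline F_s^{\theta,\bar p}/(\mathfrak S_{M(\lambda)}\cap\mathfrak S_{M(\theta\star p)}),
\]
supplies exactly the index set of representatives $(\theta,\bar p)$ with $\theta\in\Theta_\lambda^{st}$, $\bar p\in\overline P(\theta)$ and the stabilizers $\mathfrak S_{M(\lambda)}\cap\mathfrak S_{M(\theta\star p)}$ occurring in \eqref{the formula}.

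The remaining, and most delicate, step is to strip the bundle and affine directions off the $E$-polynomial equivariantly. The stabilizer $G'=\mathfrak S_{M(\lambda)}\cap\mathfrak S_{M(\theta\star p)}$ acts on $F_s^{\theta,\bar p}$ by automorphisms of the rank-$\phi(\theta,\bar p)$ bundle covering its action on the base $\overline F_s^{\theta,\bar p}$ (compatibly with the equivariant projection $\eta^{(n)}$ of Proposition \ref{description coxeter}) and on $(\mathbb C^n)^{\langle s\rangle}\cong\mathbb C^{r(\lambda)}$ by permutation of coordinates. As these fiber actions are $\mathbb C$-linear, they preserve the complex orientation and hence act trivially on the one-dimensional top compactly supported cohomology groups $H_c^{2\phi}(\mathbb C^{\phi})$ and $H_c^{2r}(\mathbb C^{r})$ --- the very observation already used in the untwisted computation. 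Taking $G'$-invariants in the Thom and K\"unneth decompositions of $H_c^*$ then factors out $\mathbb L^{\phi(\theta,\bar p)}\cdot\mathbb L^{r(\lambda)}$ cleanly, so that
\[
E\bigl((F_s^{\theta,\bar p}\times(\mathbb C^n)^{\langle s\rangle})/G'\bigr)=E\bigl(\overline F_s^{\theta,\bar p}/G'\bigr)\cdot\mathbb L^{\phi(\theta,\bar p)+r(\lambda)}.
\]
Multiplying by $\mathbb L^{a(s;\theta,\bar p)}$ and summing over $\lambda,\theta,\bar p$ then assembles the right-hand side of \eqref{the formula}. I expect the main obstacle to be precisely this equivariant factorization: one must confirm that $G'$ genuinely respects the fibration and acts $\mathbb C$-linearly on the fibers, so that passing to invariants commutes with peeling off $\mathbb L^{\phi+r}$; all the rest is bookkeeping assembled from the cited lemmas.
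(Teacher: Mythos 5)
Your proposal is correct and follows essentially the same route as the paper, which states the theorem as the assembly ("Combining what we have obtained above") of the orbifold formula \eqref{orb=st}, the untwisted-sector computation, the stratification of the fixed loci by $(\theta,\bar p)$ with the age from Lemma \ref{value of age}, the reduction of the $Z(s)$-quotient to $\mathfrak S_{M(\lambda)}\cap\mathfrak S_{M(\theta\star p)}$, and the peeling off of $\mathbb L^{\phi(\theta,\bar p)+r(\lambda)}$ from the linear fiber directions. The one step you flag as delicate --- the equivariant Thom/K\"unneth factorization --- is handled in the paper by the same observation you cite (a finite group acting linearly fixes the class spanning the top compactly supported cohomology), so no gap remains.
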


\subsection{} \label{use of PDLS}
We remark that one can actually calculate the term
$E(\overline F _{s_{\lambda}} ^{\theta,\overline p}/(\mathfrak S_{M(\lambda)}\cap \mathfrak S_{M(\theta\star p)}))$
in the formula \eqref{the formula} as follows.

We saw in Proposition \ref{connected components of fixed point locus} that
\[
\overline F_s^{\theta, \bar p}\cong X(A_{r_1-1})\times \dots \times X(A_{r_k-1})
\]
if $m(\theta \star p)=(r_1,\dots, r_k)$. Since $\mathfrak S_{M(\lambda)} \cap \mathfrak S_{M(\theta \star p)}$
is a Young subgroup of
\[
\mathfrak S_{M(\theta \star p)}\cong \mathfrak S_{r_1}\times \dots \times
\mathfrak S_{r_k},
\]
there is a partition $\mu _j\vdash r_j$ for each $1\leqslant j\leqslant k$ such that
\[
\mathfrak S_{M(\lambda)} \cap \mathfrak S_{M(\theta \star p)}
\cong \mathfrak S_{\mu _1}\times \dots \times \mathfrak S_{\mu _k},
\]
where $\mathfrak S_{\mu _j}\subset\mathfrak S_{r_j}$ is the Young subgroup
associated with the partition $\mu _j\vdash r_j$. Therefore, we have
\[
E(\overline F _{s_{\lambda}} ^{\theta,\overline p}/(\mathfrak S_{M(\lambda)}\cap \mathfrak S_{M(\theta\star p)}))
= E(X(A_{r_1-1})/\mathfrak S_{\mu _1})\times \dots \times E(X(A_{r_k-1})/\mathfrak S_{\mu _k}).
\]

On the other hand, $E(X(A_{r-1})/\mathfrak S_{\mu})$ for $\mu \vdash r$ can be calculated
by the character formula of Procesi, Dolgachev-Lunts, and Stembridge:
if we define $\chi _l(A_{n-1})$ to be the character of the $\mathfrak S_n$-representation
$H^{2l}(X(A_{n-1}),\mathbb Q))$ and $\chi [A_{n-1},q]=\sum _{l=0}^{n-1} \chi _l(A_{n-1})q^l$ the generating function,
then we know that
\begin{align*}
1+\sum _{n\geqslant 1} \chi [A_{n-1},q]t^n
& =
\frac{1+\sum _{m\geqslant 1} h_m t^m}{1-\sum _{m\geqslant 2}(q+\dots +q ^{m-1}) h_mt^m}  \numberthis \label{PDLS} \\
& =1+ h_1t +h_2(1+q)t^2+(h_3+(h_1h_2+h_3)q+h_3q^2)t^3\\
&\qquad \qquad +(h_4+(h_2^2+h_1h_3+h_4)(q+q^2)+h_4q^3)t^4+\cdots ,
\end{align*}
where $h_m$ is the character of the trivial representation of $\mathfrak S_m$, or rather,
the complete symmetric function of degree $m$ (see \cite{Stem}, Theorem 6.2, see also \cite{P,DL}).
Therefore, if $\mu =(m_1,\dots, m_k)$, the character inner product gives the formula
\[
E(X(A_{r-1}/\mathfrak S_{\mu})) = ( h_{\mu}, \chi[A_{r-1},\mathbb L] ),
\]
where $h_{\mu}=h_{m_1}\cdots h_{m_k}$.

\subsection{} \label{cohomological substitute}
Let $X$ be a variety with only log terminal singularities. The change of variable formula for
motivic integration (\cite{Ba}, Theorem 3.5, or \cite{Y}, Theorem 66) implies that
if $f:Y\to X$ is a proper birational morphism that is crepant, \emph{i.e.},
$K_Y=f^*K_X$, the stringy $E$-function is invariant: $E_{st}(Y)=E_{st}(X)$
(\cite{Ba}, Theorem 3.8). In particular, if $f:Y\to X$ is a crepant resolution,
we have $E_{st}(X)=E_{st}(Y)=E(Y)$.

In our situation, as the birational morphism $\mu ^{(n)}:Z^{(n)}\to X^{(n)}$ is small,
it is in particular crepant. Thus we have $E_{st}(X^{(n)})=E_{st}(Z^{(n)})$.
If the symmetric product $X^{(n)}$ admits a crepant \emph{resolution}
$Y\to X^{(n)}$, we have $E(Y)=E_{st}(Z^{(n)})$.
Moreover if the induced family $Y\to B$ is semistable, and if we denote the
singular fiber by $Y_0$, Poincar\'e duality and homotopy invariance implies
\[
H^p_c (Y)^* \cong H^{2n-p+2}(Y)\cong H^{2n-p+2}(Y_0).
\]
Therefore, the polynomial $E(Y)=E_{st}(Z^{(n)})$ encodes
the cohomological information of the singular fiber of a semistable model.
In general, $X^{(n)}$ may not admit a crepant resolution; nevertheless
the Gorenstein canonical orbifold model $Z^{(n)}$ is already a good substitute
for a minimal semistable model of $X^{(n)}$ on the level of cohomology.

\begin{proposition}\label{stringy E-poly for small n}
The stringy $E$-polynomial $E_{st}(Z^{(n)})$ for $n=2,3,4,5$ is as follows:
\begin{align*}
E_{st}(Z^{(2)}) &= \mathbb L^5+2\mathbb L^4+\mathbb L^3, \\
E_{st}(Z^{(3)}) &= \mathbb L^7+3\mathbb L^6+5\mathbb L^5+2\mathbb L^4, \\
E_{st}(Z^{(4)}) &= \mathbb L^9+4\mathbb L^8+11\mathbb L^7+14\mathbb L^6+4\mathbb L^5, \\
E_{st}(Z^{(5)}) &= \mathbb L^{11}+5\mathbb L^{10}+17\mathbb L^9+35\mathbb L^8+30\mathbb L^7+6\mathbb L^6.
\end{align*}
\end{proposition}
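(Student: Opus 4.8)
The plan is to evaluate the master formula \eqref{the formula} of Theorem \ref{formula for stringy E-polynomial} directly for each of the four values $n=2,3,4,5$, using the recipe of \S\ref{use of PDLS} to compute the cohomological factors. The untwisted sector requires no further work: it equals $E(Z^{(n)})=\mathbb L^{n+2}(1+\mathbb L)^{n-1}$ in every case, and its top monomial $\mathbb L^{2n+1}$ already accounts for the leading term of the asserted answers (consistent with $\dim Z^{(n)}=2n+1$). Everything else is a finite, if lengthy, enumeration of the twisted sectors.

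For each $n$ I would run through the partitions $\lambda\vdash n$ with $\lambda\neq(1^n)$ --- namely $(2)$ for $n=2$; $(3),(2,1)$ for $n=3$; $(4),(3,1),(2,2),(2,1,1)$ for $n=4$; and $(5),(4,1),(3,2),(3,1,1),(2,2,1),(2,1,1,1)$ for $n=5$ --- and for each $\lambda$ of length $r$ list the standard angle types $\theta\in\Theta_\lambda^{st}$ (of the $\prod_j\lambda_j$ elements of $\Theta_\lambda$, the standard ones are the $\mathfrak S_{M(\lambda)}$-orbit representatives). For a fixed $(\lambda,\theta)$ I then break $\mathfrak S_r$ into the cosets $\bar p\in P(\theta)$, choose representatives $\overline P(\theta)$ for the further quotient by $\mathfrak S_{M(\lambda)}\cap\mathfrak S_{M(\theta)}$, and read off the multiplicity partition $m(\theta\star p)=(r_1,\dots,r_k)$ for each. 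Each resulting triple $(\lambda,\theta,\bar p)$ produces exactly one summand of \eqref{the formula}.

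For each such triple the two ingredients are the exponent and the cohomology factor. The exponent $\phi(\theta,\bar p)+r(\lambda)+a(s_\lambda;\theta,\bar p)$ is assembled from: the age $a(s_\lambda;\theta,\bar p)$ evaluated by the closed formula \eqref{age function} of Lemma \ref{value of age}; the integer $\phi(\theta,\bar p)$, the number of zeros among $\theta_{p(1)}$ and $\theta_{p(r)}$; and the length $r(\lambda)=r$. The cohomology factor $E(\overline F_{s_\lambda}^{\theta,\bar p}/(\mathfrak S_{M(\lambda)}\cap\mathfrak S_{M(\theta\star p)}))$ factors, as explained in \S\ref{use of PDLS}, as $\prod_{j=1}^k E(X(A_{r_j-1})/\mathfrak S_{\mu_j})$, where $\mathfrak S_{M(\lambda)}\cap\mathfrak S_{M(\theta\star p)}\cong\prod_j\mathfrak S_{\mu_j}$, and each factor is extracted from the character inner product $(h_{\mu_j},\chi[A_{r_j-1},\mathbb L])$ via the generating function \eqref{PDLS}. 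For $n\le 3$ only the trivial cases $X(A_0)=\mathrm{pt}$ and $X(A_1)=\mathbb P^1$ occur, so these factors are $1$, $1+\mathbb L$, or $(1+\mathbb L)$ after symmetrization; the larger pieces $X(A_2),X(A_3)$ entering at $n=4,5$ genuinely need \eqref{PDLS}. Summing all the monomials and adding the untwisted sector yields the four stated polynomials.

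The main obstacle is the combinatorial bookkeeping, which grows quickly: the number of triples $(\lambda,\theta,\bar p)$ and, above all, the correct identification of the intersection groups $\mathfrak S_{M(\lambda)}\cap\mathfrak S_{M(\theta\star p)}$ and their factorization into $\prod_j\mathfrak S_{\mu_j}$ make $n=5$ (and to a lesser degree $n=4$) delicate and error-prone. To guard against slips I would use two consistency checks. First, every coefficient must be a nonnegative integer and the whole answer a polynomial in $\mathbb L$, since each twisted contribution is an $E$-polynomial of a finite quotient of a smooth projective toric variety, hence concentrated in Hodge type $(k,k)$. Second, the lowest-degree monomial must be exactly $\mathbb L^{n+1}$: the age is an integer (Lemma \ref{local eigenvalues}) and satisfies $a\ge n-r$ by \eqref{age function}, and a short case analysis on whether $\theta_{p(1)}=0$ forces $\phi(\theta,\bar p)+r(\lambda)+a\ge n+1$, while the untwisted term is bounded below by $\mathbb L^{n+2}$. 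Matching these bounds and the top term $\mathbb L^{2n+1}$ against the tabulated polynomials provides a reassuring check on the enumeration.
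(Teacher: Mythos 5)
Your proposal follows exactly the route of the paper's own proof: the paper likewise evaluates the master formula of Theorem \ref{formula for stringy E-polynomial} by enumerating, for each nontrivial $\lambda\vdash n$, the standard angle types, the cosets $\bar p\in\overline P(\theta)$, the values of $\phi$ and $a$, and the factors $E(X(A_{r_j-1})/\mathfrak S_{\mu_j})$ via the Procesi--Dolgachev--Lunts--Stembridge generating function, carrying this out explicitly for $n=4$ and noting the other cases are analogous. Your plan is correct and adds sensible sanity checks (top term $\mathbb L^{2n+1}$, bottom term $\mathbb L^{n+1}$); the only thing separating it from a complete proof is actually performing the finite enumeration, which is precisely the content of the paper's argument.
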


\begin{proof}
We demonstrate the case $n=4$. The other cases are similar (but much more complicated in the case $n=5$).
Nontrivial partition $\lambda$ of $4$ is one of $\lambda = (2,1^2),\, (2^2),\, (3,1),\, (4)$.  We calculate
the contributions to twisted sectors case by case. We represent $p\in \mathfrak S_r$ by a sequence
$p=[p(1), p(2), \dots , p(r)]$ to make it short.

\noindent \underline{Case $\lambda = (2,1^2)$}. The length of the partition is $r=3$. The set of standard angle type in this case is
$\Theta _{(2,1^2)}^{st} = \{ (0^3),\, (1/2,0^2)\}$.
We also note that $\mathfrak S_{M(\lambda)}=\mathfrak S\{2,3\}$.
\begin{description}\setlength{\leftskip}{-1.5em}
 \item[Case $\theta = (0^3)$] In this case, we have $\mathfrak S_{M(\theta)}=\mathfrak S_{M(\theta \star p)}=\mathfrak S_3$
 for all $p\in \mathfrak S_3$. It immediately follows that $P(\theta)=\{\bar 1\}$, $\overline F_{(1\; 2)}^{(0^3)} = X(A_2)$, and
 $\mathfrak S_{M(\lambda)}\cap \mathfrak S_{M(\theta \star 1)}=\mathfrak S\{2,3\}$. We need to know
 $E(X(A_2)/\mathfrak S\{2,3\})$, which is calculated by the argument in \S\ref{use of PDLS} as follows:
 \begin{align*}
 E(X(A_2)/\mathfrak S\{2,3\})
 & =(h_1h_2,\chi [A_2,\mathbb L]) \\
 & =(h_1h_2, h_3+(h_1h_2+h_3)\mathbb L+h_3\mathbb L^2)\\
 & =1+3\mathbb L+\mathbb L^2.
 \end{align*}
 Noting $\phi =2,\, a=1$,
 the associated twisted sector is $\mathbb L^8+3\mathbb L^7+\mathbb L^6$.
 \item[Case $\theta = (1/2,0^2)$] $\mathfrak S_{M(\theta)}=\mathfrak S\{2,3\}$.
 $P(\theta)$ consists of
 \[
 \overline{[1,2,3]}=\overline{[1,3,2]},
 \overline{[2,1,3]}, \overline{[3,1,2]}, \mbox{ and }\overline{[2,3,1]}=\overline{[3,2,1]}.
 \]
 However, $\overline{[2,1,3]}$ and $\overline{[3,1,2]}$ is in the same oribt under the action of
 $\mathfrak S_{M(\lambda)}\cap \mathfrak S_{M(\theta)}=\mathfrak S\{2,3\}$, therefore we have
 $\overline P(\theta) = \{ \bar 1,\, \overline{(1\; 2)},\, \overline{(1\; 2\; 3)}\}$.
 It is straightforward to get the following table:
 \begin{center}
 \begin{tabular}{c || c|c|c|c || c}
 $p$ & $\mathfrak S_{M(\theta\star p)}$ & $\theta \circ p$ & $\phi$ & $a$ & twisted sector \\ \hline \hline
 $1$ & $\mathfrak S\{2,3\}$ & $(1/2,0,0)$ & $1$ & $2$ & $(1+\mathbb L)\mathbb L^6$ \\ \hline
 $(1\; 2)$ & $\{1\}$ & $(0,1/2,0)$ & $2$ & $2$ & $\mathbb L^7$ \\ \hline
 $(1\; 2\; 3)$ & $\mathfrak S\{2,3\}$ & $(0,0,1/2)$ & $1$ & $2$ & $(1+\mathbb L)\mathbb L^6$
 \end{tabular}
 \end{center}
 In total, the contribution is $3\mathbb L^7+2\mathbb L^6$.
\end{description}

\noindent \underline{Case $\lambda = (2^2)$}. We have $r=2,\, \mathfrak S_{M(\lambda)}=\mathfrak S_2$,
and
\[
\Theta _{(2^2)}^{st}=\{(0,0),\, (0,1/2),\, (1/2,1/2)\}.
\]
\begin{description}\setlength{\leftskip}{-1.5em}
 \item[Case $\theta = (0^2)$] $\mathfrak S_{M(\theta)}=\mathfrak S_{M(\theta \circ p)}=\mathfrak S_2$ for
 all $p\in \mathfrak S_2$, and $P(\theta )=\{\bar 1\}$ as before. $\overline F_{(1\; 2)(3\; 4)}^{(0^2)} = X(A_1)$
 and $\mathfrak S_{M(\lambda)}\cap \mathfrak S_{M(\theta)}=\mathfrak S_2$,
 $\phi=2$, and $a=2$, so that the associated twisted sector is
 $E(X(A_1)/\mathfrak S_2)\mathbb L^6 = \mathbb L^7+\mathbb L^6$.
 \item[Case $\theta = (0,1/2)$] Since $\mathfrak S_{M(\theta )}=\{1\}$, $P(\theta )=\{\bar 1,\, \overline{(1\; 2)}\}$
 and $\overline F_{(1\; 2)(3\; 4)}^{(0^2)}$ consists of two points. As we have $\phi =1,\, a=3$ in both cases,
 the associated twisted sector is $2\mathbb L^6$.
  \item[Case $\theta = (1/2,1/2)$] $\mathfrak S_{M(\theta)}=\mathfrak S_{M(\theta \star p)}=\mathfrak S_2$,
  and $P(\theta)=\{\bar 1\}$. As before, we know $\overline F_{(1\; 2)(3\; 4)}^{(1/2,1/2)} = X(A_1)$
 and $\mathfrak S_{M(\lambda)}\cap \mathfrak S_{M(\theta)}=\mathfrak S_2$. Since $\phi =0, a=3$
 the associated twisted sector is $ E(X(A_1)/\mathfrak S_2)\mathbb L^5 = \mathbb L^6+\mathbb L^5$.
\end{description}

\noindent \underline{Case $\lambda = (3,1)$}. In this case, $r=2,\, \mathfrak S_{M(\lambda)}=\{1\}$,
$\Theta _{(3,1)}^{st} = \{ (0^2),\, (1/3,0),\, (2/3,0)\}$.
\begin{description}\setlength{\leftskip}{-1.5em}
 \item[Case $\theta = (0^2)$] As before, $\mathfrak S_{M(\theta)}=\mathfrak S_{M(\theta \star p)}=\mathfrak S_2$
 and $P(\theta) =\{1\}$. Therefore, $\overline F_{(1\; 2\; 3)}^{(0^2)}=X(A_1)$. As $\phi=2, a=2$,
 the twisted sector is $\mathbb L^7+\mathbb L^6$.
 \item[Case $\theta = (1/3,0)$] $\mathfrak S_{M(\theta)}=1$, $P(\theta) = \{\bar 1,\, \overline{(1\; 2)}\}$ and
 $\overline F_{(1\; 2\; 3)}^{(1/3,0)}$ consists of two points. As $\phi =1, a=3$ in this case,
 the twisted sector is $2\mathbb L^6$.
 \item[Case $\theta = (2/3,0)$] This case is completely the same as in the case $\theta =(1/3,0)$.
 The twisted sector is $2\mathbb L^6$.
\end{description}

\noindent \underline{Case $\lambda = (4)$}. In this case $r=1$ and we always have $\mathfrak S_{M(\lambda)}
=\mathfrak S_{M(\theta)}=\{1\}$. Therefore $P(\theta)=\{\bar 1\}$ and
$\overline F^{\theta}_{(1\; 2\; 3\; 4)}$ is just one point set.  As we have
\begin{center}
\begin{tabular}{c|| c | c || c}
$\theta$ & $\phi$ & $a$ & twisted sector \\ \hline\hline
$(0)$ & $2$ & $3$ & $\mathbb L^6$ \\ \hline
$(1/4), (2/4), (3/4)$ & $0$ & $4$ & $\mathbb L^5$
\end{tabular}
\end{center}
the twisted sector in total is $\mathbb L^6+3\mathbb L^5$.

Summing up everything, finally we get
\begin{align*}
E_{st}(X^{(4)}) &= \mathbb L^6 (1+\mathbb L)^3 \\
&\qquad +\left( \mathbb L^8+3\mathbb L^7+\mathbb L^6\right)
+\left( 3\mathbb L^7+2 \mathbb L^6\right) +\left( \mathbb L^7+ \mathbb L^6\right)
+2\mathbb L^6 \\
&\qquad + \left( \mathbb L^6+ \mathbb L^5\right)
+\left( \mathbb L^7+\mathbb L^6\right)+2\mathbb L^6+2\mathbb L^6
+ \left( \mathbb L^6+ 3\mathbb L^5\right)\\
&=\mathbb L^9+4\mathbb L^8+11\mathbb L^7+14\mathbb L^6+4\mathbb L^5.
\end{align*}
\end{proof}

\begin{remark}
In comparison with the case of $\Hilb ^n(S)$ for smooth algebraic surface $S$ (see \emph{e.g.} \cite{Nak}),
it is interesting to look for a formula of the generating function
\[
\sum _{n\geqslant 0} E_{st}(Z^{(n)})\cdot t^n .
\]
Unfortunately, due to combinatorial complication in Theorem \ref{formula for stringy E-polynomial},
the author does not yet have a good answer to this question at the time of writing.
\end{remark}

\section{Minimal model}

In this section, we will discuss more birational modifications of $Z^{(n)}$,
in particular minimal models of $Z^{(n)}$.

\begin{theorem}\label{minimal model}
There exists a projective birational morphism $\nu ^{(n)}:Y^{(n)} \to Z^{(n)}$
satisfying the following conditions:
\begin{enumerate}[\rm (i)]
\item $Y^{(n)}$ has only Gorenstein terminal quotient singularities.
\item $\nu ^{(n)}$ is a crepant divisorial contraction, namely $K_{Y^{(n)}}=\nu ^{(n)\, *} K_{Z^{(n)}}$ and
the exceptional set of $\nu ^{(n)}$ is a divisor.
\item Let $\psi ^{(n)}:Y^{(n)}\to B$ be the composition $\rho ^{(n)}\circ \nu ^{(n)}$.
Then, its general fiber is $\Hilb ^n(\mathbb C^*\times \mathbb C)$ and the singular fiber
is a divisor with $V$-normal crossings.
\end{enumerate}
\end{theorem}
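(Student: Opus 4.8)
The plan is to obtain $\nu^{(n)}$ by extracting precisely the discrepancy-zero (``junior'') divisors over $Z^{(n)}$, so that $Y^{(n)}$ becomes terminal while staying crepant over $Z^{(n)}$. First I would pin these divisors down with the age formula of Lemma~\ref{value of age}. Every summand of $a(s;\theta,\bar p)$ beyond $n-r$ is nonnegative, so $a\geqslant n-r$, and hence age $1$ forces $r\geqslant n-1$; excluding $\lambda=(1^n)$ leaves only $\lambda=(2,1^{n-2})$, and a direct check shows that $a=1$ holds exactly for the zero angle type $\theta=0$ (any nonzero $\theta$ raises the boundary and adjacency terms to give $a=2$). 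Thus the junior classes are exactly the transpositions with coinciding first coordinate. Evaluating Lemma~\ref{local eigenvalues} at such a class (all $\alpha_j=1$, $\zeta_1=-1$) shows that the only nontrivial eigenvalues are two copies of $-1$, so transversally to the fixed locus $Z^{(n)}$ carries the $A_1$-singularity $\mathbb C^2/\{\pm1\}$.

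Granting this, $\nu^{(n)}$ is essentially forced: I would let $Y^{(n)}\to Z^{(n)}$ be the crepant resolution of these transverse $A_1$-singularities, which locally is the toric star-subdivision inserting the junior ray $\tfrac12(1,1)$ in the lattice of $\mathbb C^2/\{\pm1\}$. Because the minimal resolution of an $A_1$-singularity is unique, these local resolutions glue over the whole transposition locus into a projective birational morphism. Its exceptional set is a divisor, generically a $\mathbb P^1$-bundle over the center, and inserting an age-one ray produces discrepancy $0$; together these give the crepant divisorial contraction $K_{Y^{(n)}}=\nu^{(n)\,*}K_{Z^{(n)}}$ of~(ii).

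It then remains to verify the three assertions. For~(i), $Y^{(n)}$ is again a quotient and hence has quotient singularities, and it is Gorenstein because $\nu^{(n)}$ is crepant over the Gorenstein $Z^{(n)}$; terminality follows from the Reid--Tai criterion, since after removing the unique junior class every nontrivial stabilizer element acquires age strictly greater than $1$. For~(iii), over $b\in B\setminus\{0\}$ the fibre of $Z^{(n)}$ is $\Sym^n(\mathbb C^*\times\mathbb C)$ and $\nu^{(n)}$ restricts there to the resolution of the transverse $A_1$'s along the pairwise diagonals, i.e. to the Hilbert--Chow morphism, so the general fibre of $\psi^{(n)}$ is $\Hilb^n(\mathbb C^*\times\mathbb C)$. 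The $V$-normal crossing property of the central fibre is inherited exactly as in Theorem~\ref{gorenstein canonical model}: the modification is toric, each $s\in\mathfrak S_n$ still acts by a toric automorphism, and $\psi^{(n)}$ stays locally monomial over $B$.

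The main obstacle I expect is the behaviour along the deeper strata where three or more points collide: there the transposition locus is itself singular, several junior divisors meet, and one must verify that a single equivariant subdivision simultaneously resolves all the transverse $A_1$'s crepantly and leaves only terminal quotient singularities (age $>1$), rather than creating new non-crepant or non-terminal points. This combinatorial bookkeeping on the fan is the real content, and it is why the construction is more delicate than simply blowing up the reduced singular locus. A secondary difficulty is to confirm, for all $n$ and not only $n=2$, that the restriction of this global terminalization to a general fibre is literally the Hilbert--Chow morphism.
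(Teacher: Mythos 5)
Your identification of the junior conjugacy classes is exactly the paper's: the age bound $a(s;\theta,\bar p)\geqslant n-r$ forces $\lambda=(2,1^{n-2})$ and $\theta=(0^{n-1})$, the transverse singularity at a generic point of that locus is $\mathbb C^2/\{\pm 1\}$, and terminality of the result is detected by observing that the unique discrepancy-zero divisor over $Z^{(n)}$ has already been extracted. But the central construction is missing, and you have in fact flagged the missing piece yourself without supplying it. A toric star subdivision inserting the ray $\tfrac12(1,1)$ only makes sense where the singularity really is a transverse $A_1$; along the deeper strata, where elements with longer cycles (or several commuting transpositions) have fixed points of trivial angle type, the local model of $Z^{(n)}$ is not toric at all --- it is a neighbourhood of a cycle $\sum\mu_i a_i$ in $\Sym^n(\mathbb C^2)\times\mathbb C$, further divided by a residual finite group. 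No fan combinatorics produces the required crepant modification there. The paper's proof supplies precisely this ingredient: it isolates the normal Young subgroup $\Stab^0(q)\subset\Stab(q)$ of elements fixing $q$ with trivial angle type, identifies $U_q/\Stab^0(q)$ with an open piece of $\Sym^n(\mathbb C^2)\times\mathbb C$, takes the Hilbert--Chow crepant resolution $\Hilb^n(\mathbb C^2)\times\mathbb C$, and then uses Haiman's theorem ($\Hilb^n(\mathbb C^2)\cong\mathfrak S_n\mbox{-}\Hilb(\mathbb C^{2n})$) to see that the residual group $G_q=\Stab(q)/\Stab^0(q)$ acts on this resolution, so that $\widetilde V_q/G_q$ gives the local model of $Y^{(n)}$. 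That equivariance is not available from a uniqueness-of-minimal-resolution argument in dimension $\geqslant 3$, and it is the step your gluing claim silently needs.

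A related overstatement: resolving only the transverse $A_1$'s along the pairwise diagonals does not by itself yield the Hilbert--Chow morphism on a general fibre --- $\Hilb^n(\mathbb C^*\times\mathbb C)\to\Sym^n(\mathbb C^*\times\mathbb C)$ also modifies the strata where three or more points collide, so smoothness of the general fibre in (iii) again requires the global Hilbert-scheme construction rather than the $A_1$ picture. Likewise your verification of (i) by Reid--Tai presupposes that you know the stabilizers on $Y^{(n)}$, which you can only read off once the local models $\widetilde V_q/G_q$ are in hand; the paper instead argues that any discrepancy-zero divisor over $Z^{(n)}$ must dominate the transposition locus $\Gamma$ and that $\psi^{(n)}$ is already a crepant resolution at the generic point of $\Gamma$, so no further crepant divisors exist. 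In short, the skeleton of your argument matches the paper, but the load-bearing step --- the equivariant crepant local model over the deep strata, resting on Fogarty/Beauville/Haiman rather than on toric subdivision --- is absent.
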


Although the existence of a minimal model of $Z^{(n)}$ is a consequence of
the general theory of minimal model program (MMP) of higher dimensional algebraic varieties \cite{BCHM},
here we stick to an explicit construction of a minimal model $Y^{(n)}$ so that we have a good control on the
singularities of the total space and the singular fiber of the resulted minimal model.
The claims (i) and (iii) will not follow from a straightforward application of MMP.

\subsection{}
From the description of \S\ref{symmetric group action}, the natural morphism
$\tilde\rho ^{(n)\prime}:\widetilde Z^{(n)\prime}=X(\Delta ^{(n)})\to B=\mathbb C$ is a toric morphism
associated with the lattice homomorphism
\[
g= (1\; 1\; 0\; \dots \; 0) : N=\mathbb Z^{n+1} \to \mathbb Z.
\]
If we take a basis of $N$ consisting of the column vectors of $Q$ in the proof of
Proposition \ref{description coxeter}, $g$ is represented by a matrix
$(1\; 0\; \dots \; 0\; 1)$. It immediately follows that the primitive generator
for each ray in the fan $\Delta ^{(n)}$, namely the column vectors of
\eqref{sum of line bundles}, has multiplicity one with respect to $g$.
This means that the fiber of $\tilde\rho ^{(n)\prime}$ over the origin
is the union of all the torus invariant divisors of $X(\Delta ^{(n)})$.
It is easy to see from the description \eqref{Sn action on N} the $\mathfrak S_n$-action on $N$ that
its restriction to $\Ker (g)$ is the permutation representation.
Therefore, the restriction of $\tilde \rho ^{(n)\prime} $ to the torus $N\otimes \mathbb C^*\to \mathbb C^*$
is a trivial family of the permutation action on $(\mathbb C^*)^n$, and
the $\mathfrak S_n$-quotient of $N\otimes \mathbb C^*\times \mathbb C^n\to \mathbb C^*$ is a
trivial family of $\Sym ^n(\mathbb C^*\times \mathbb C)$.
It follows that $Z^{(n) \circ}_n=\rho ^{(n)\; -1} (\mathbb C^*)$ has a crepant divisorial resolution
\[
\psi^{(n) \circ}: Y^{(n) \circ}\to Z^{(n) \circ}
\]
that is a family of Hilbert-Chow morphism $\Hilb^n(\mathbb C^*\times \mathbb C)\to \Sym^n(\mathbb C^*\times \mathbb C)$.
We prove that
$\psi ^{(n)\circ}$ extends to a crepant birational morphism $\psi ^{(n)}:Y^{(n)}\to Z^{(n)}$.

\begin{lemma}
Let $s\in \mathfrak S_n$.
The connected component $F^{\theta, \bar p}_s$ (see \S\ref{definition of F}) of the $s$-fixed point locus
in $X(\Delta ^{(n)})$ has intersection with the open dense torus $N\otimes \mathbb C^*$ if and only if
$\theta$ is a zero-sequence $(0^r)$ where $r$ is the length of the partition $\lambda$ associated with $s$.
\end{lemma}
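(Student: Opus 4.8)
The plan is to exploit the rank two vector bundle structure of $X(\Delta^{(n)})$ from Proposition \ref{description coxeter} to split the required condition into a \emph{base} part and a \emph{fiber} part. Since $\eta^{(n)}:X(\Delta^{(n)})\to X(A_{n-1})$ is the projection of $\mathcal O(-D_{pos})\oplus\mathcal O(-D_{neg})$, the open dense torus $N\otimes\mathbb C^*$ of the total space is exactly the restriction of the bundle to the open dense torus $\overline N\otimes\mathbb C^*\subset X(A_{n-1})$, intersected with the fiberwise torus $(\mathbb C^*)^2$; concretely $N\otimes\mathbb C^*=(\overline N\otimes\mathbb C^*)\times(\mathbb C^*)^2$. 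Thus a point $q\in F_s^{\theta,\bar p}$ lies in $N\otimes\mathbb C^*$ if and only if (a) its image $\eta^{(n)}(q)$ lies in $\overline N\otimes\mathbb C^*$, and (b) both of its fiber coordinates are nonzero. Recalling from \S\ref{definition of F} that $F_s^{\theta,\bar p}\to\overline F_s^{\theta,\bar p}=V(\tau_{\bar p})$ is a vector bundle of rank $\phi(\theta,\bar p)$, I would treat these two conditions separately.

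For the base condition, recall from \S\ref{s fixed open} that $\eta^{(n)}(q)=(\boldsymbol\alpha(\theta),\xi)\in(\mathbb C^*)^{n-r}\times X(A_{r-1})$, where the first factor always lies in the torus. Hence condition (a) can be met over a given component exactly when $\xi$ may be taken in the open dense torus of $X(A_{r-1})$, i.e. when the orbit closure $V(\tau_{\bar p})$ meets that torus. By the orbit--cone correspondence an orbit closure either contains the dense orbit or is disjoint from it, so this happens if and only if $\tau_{\bar p}=\{0\}$. Invoking Proposition \ref{connected components of fixed point locus}, which gives $V(\tau_{\bar p})\cong X(A_{r_1-1})\times\dots\times X(A_{r_k-1})$ for $m(\theta\star p)=(r_1,\dots,r_k)$, the cone $\tau_{\bar p}$ is trivial precisely when $k=1$ and $r_1=r$, that is, when $\theta\star p$ has a single adjacency block. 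Since a single block means all entries of $\theta\circ p$ agree, this is equivalent to $\theta$ being a constant sequence.

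For the fiber condition, I would invoke the eigenvalue computation of \S\ref{action on the fiber}: over an $s$-fixed point in $U_p$, the element $s$ acts on the two summands of the fiber by multiplication by $\alpha_{p(1)}^{-1}$ and $\alpha_{p(r)}$ respectively, where $\alpha_j=e^{2\pi i\theta_j}$. A fiber point with a nonzero coordinate in a given summand can be $s$-fixed only if $s$ acts trivially on that summand; hence requiring both fiber coordinates nonzero (condition (b)) forces $\alpha_{p(1)}^{-1}=\alpha_{p(r)}=1$, i.e. $\theta_{p(1)}=\theta_{p(r)}=0$, equivalently $\phi(\theta,\bar p)=2$. Combining the two conditions, a constant sequence $\theta$ with $\theta_{p(1)}=0$ must be the zero sequence $(0^r)$; conversely, when $\theta=(0^r)$ one has $V(\tau_{\bar p})=X(A_{r-1})$ meeting the torus and the whole fiber $(\mathbb C^*)^2$ consisting of $s$-fixed points, so $F_s^{(0^r),\bar p}$ does meet $N\otimes\mathbb C^*$.

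The main obstacle I anticipate lies not in either individual verification but in recognizing that the base condition by itself only yields that $\theta$ is constant; it is the fiber condition, and specifically the precise eigenvalues $\alpha_{p(1)}^{-1},\alpha_{p(r)}$ recorded in \S\ref{action on the fiber}, that sharpens \emph{constant} to \emph{identically zero}. One must therefore be careful to treat the two directions of the vector bundle independently, and to confirm that the fiber torus $(\mathbb C^*)^2$ genuinely imposes the vanishing $\theta_{p(1)}=\theta_{p(r)}=0$ rather than merely $\phi(\theta,\bar p)\ge 1$.
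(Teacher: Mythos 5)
Your proof is correct and follows essentially the same route as the paper: both split membership in the dense torus into a base condition on $X(A_{n-1})$ and a fiber condition on the rank-two bundle, with the fiber eigenvalues $\alpha_{p(1)}^{-1},\alpha_{p(r)}$ forcing $\theta_{p(1)}=\theta_{p(r)}=0$ and the base condition forcing $\theta$ to be constant. The only cosmetic difference is that you deduce constancy of $\theta$ via Proposition \ref{connected components of fixed point locus} and the orbit--cone correspondence, whereas the paper reads it off directly from the action formula \eqref{alpha action} applied to the nonvanishing coordinates $y_{p(j)}/y_{p(j+1)}$.
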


\begin{proof}
Let us assume that $F^{\theta, \bar p}_s$ has a point in common with $N\otimes \mathbb C^*$.
Then, we necessarily have $\phi (\theta,\bar p)=2$, that is, $\theta _p{(1)}=\theta _{p(r)}=0$.
A point $(\boldsymbol\alpha, \left(\frac{y_{p(1)}}{y_{p(2)}},\dots ,\frac{y_{p(r-1)}}{y_{p(r)}}\right) )\in X(A_{n-1})$
is in the open dense torus if and only if $\frac {y_{p(j)}}{y_{p(j+1)}}\neq 0$ for all $j$.
Taking the action map \eqref{alpha action} into account, it follows that $\theta _j- \theta _{j+1}\in \mathbb Z$.
Therefore, we must have $\theta =(0^r)$. The converse also follows from the action map \eqref{alpha action}
and the definition of $F^{\theta, \bar p}_s$.
\end{proof}

\subsection{}
Let $F^0$ be the union of the fixed point loci of trivial angle type $F_s^{(0^{r})}$
for all $s\in \mathfrak S_n$ (here $r$ is the length of
the associated partition $\lambda$ to $s\in \mathfrak S_n$).
Since $F_s^{(0^r)}$ is the total space of a rank $2$ vector bundle
over $\overline F_s^{(0^{r-1})}\cong X(A_{r-1})$,
we know $\dim F_s^{(0^{r})}=r+1$.
In particular $F^0$ is a divisor in $X(\Delta ^{(n)})$.

\begin{lemma}
Let $q=(q_1,q_2)\in \widetilde Z^{(n)}$.
We define
\[
\Stab ^0(q) = \{s\in \Stab (q)\; |\; q_1\in F_s^{(0^r)}\}.
\]
Then,
\begin{enumerate}[\rm (i)]
  \item $\Stab ^0(q)$ is a Young subgroup of $\mathfrak S_n$.
  \item $\Stab ^0(q)$ is a normal subgroup of $\Stab (q)$.
\end{enumerate}
\end{lemma}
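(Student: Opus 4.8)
The plan is to identify $\Stab^0(q)$ with a concrete Young subgroup attached to a set partition of $\{1,\dots,n\}$ read off from the two factors of $q=(q_1,q_2)$, and then to obtain normality directly from the $\mathfrak S_n$-equivariance of the trivial angle type loci. Throughout write $\bar q_1=\eta^{(n)}(q_1)\in X(A_{n-1})$ and let $\overline F_s^{(0^r)}=\eta^{(n)}(F_s^{(0^r)})\subset X(A_{n-1})$ be its image. Since for $\theta=(0^r)$ one has $\phi(\theta,\bar p)=2$, each component of $F_s^{(0^r)}$ is the \emph{full} rank-$2$ subbundle over its image (\S\ref{definition of F}); hence $F_s^{(0^r)}=(\eta^{(n)})^{-1}(\overline F_s^{(0^r)})$ and, in particular, $q_1\in F_s^{(0^r)}$ if and only if $\bar q_1\in\overline F_s^{(0^r)}$, with no further condition on the fibre.

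First I would give an intrinsic description of $\overline F_s^{(0^r)}$. For the standard element $s_\lambda$ the computation of \S\ref{s fixed open} shows that the trivial angle type slice $\boldsymbol\alpha=(1,\dots,1)$ is cut out by $t_{ji}=1$ for all $j,i$, i.e.\ by the condition that the homogeneous coordinates $x_a$ be constant along each cycle of $s_\lambda$; taking closures and conjugating by $g$ for a general $s=g s_\lambda g^{-1}$ gives
\[
\overline F_s^{(0^r)}=\overline{\{[x_1:\dots:x_n]\mid x_a=x_b\text{ whenever }a,b\text{ lie in a common cycle of }s\}}.
\]
Now $\bar q_1$ lies in a unique torus orbit of $X(A_{n-1})$, which is recorded by an ordered set partition $(B_1,\dots,B_m)$ of $\{1,\dots,n\}$; on this orbit the ratio $x_a/x_b$ is a nonvanishing regular coordinate exactly when $a,b$ lie in the same block, and otherwise degenerates to $0$ or $\infty$. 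An orbit-closure argument then shows $\bar q_1\in\overline F_s^{(0^r)}$ precisely when every cycle of $s$ is contained in a single block $B_i$ and, within each block, $s$ preserves the level sets of $a\mapsto x_a$; equivalently, $s$ fixes every block of the partition $\mathcal P_1$ obtained by subdividing each $B_i$ into these level sets.

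For part (i) it then remains to feed in the second factor. As $s\in\Stab(q)$ also requires $s$ to fix $q_2=(w_1,\dots,w_n)\in\mathbb C^n$, that is to fix every block of the partition $\mathcal P_2$ of $\{1,\dots,n\}$ into the level sets of $a\mapsto w_a$, we obtain
\[
\Stab^0(q)=\{s\in\mathfrak S_n\mid s\text{ fixes every block of }\mathcal P_1\wedge\mathcal P_2\},
\]
which is exactly the Young subgroup associated with the common refinement $\mathcal P_1\wedge\mathcal P_2$. (The trivial angle type already forces $s$ to act trivially on the fibre over $\bar q_1$, so the condition $s\cdot q_1=q_1$ is automatic and contributes nothing beyond $\mathcal P_1$.) For part (ii) I would avoid re-examining the partition and argue by equivariance. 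The fixed-point data is conjugation equivariant: a point is $gsg^{-1}$-fixed with a given angle type iff its $g^{-1}$-translate is $s$-fixed with the same angle type, so $F_{gsg^{-1}}^{(0^r)}=g\cdot F_s^{(0^r)}$ (and $gsg^{-1}$ has the same length $r$). Given $g\in\Stab(q)$ and $s\in\Stab^0(q)$, using $g\cdot q_1=q_1$ we get
\[
q_1\in F_{gsg^{-1}}^{(0^r)}\iff g^{-1}\cdot q_1\in F_s^{(0^r)}\iff q_1\in F_s^{(0^r)},
\]
which holds; since $gsg^{-1}\in\Stab(q)$ automatically, $gsg^{-1}\in\Stab^0(q)$, proving $\Stab^0(q)\triangleleft\Stab(q)$.

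The main obstacle is the orbit-closure step in part (i): one must check that the block-plus-level-set description of $\overline F_s^{(0^r)}$, which is transparent on the dense torus, remains correct at a boundary point $\bar q_1$, uniformly over all torus orbits of the permutohedral variety $X(A_{n-1})$. Once the dictionary between the orbit of $\bar q_1$ and its ordered set partition is made precise, both the Young-subgroup structure in (i) and the normality in (ii) follow formally.
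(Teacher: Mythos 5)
Your overall architecture is sound, and your part (ii) is correct: the conjugation-equivariance $F_{gsg^{-1}}^{(0^r)}=g\cdot F_s^{(0^r)}$ (valid because the angle type is read off from the Jacobian, hence is a class function, and $gsg^{-1}$ has the same cycle type as $s$) together with $g\cdot q_1=q_1$ gives normality cleanly. The paper instead characterizes membership in $\Stab^0(q)$ by the multiplicity of the eigenvalue $1$ in $J_q(s)$ being $2r+1$ (Lemma \ref{local eigenvalues}) and notes that this is conjugation-invariant --- the same idea in different clothing. Your reduction $F_s^{(0^r)}=(\eta^{(n)})^{-1}\bigl(\overline F_s^{(0^r)}\bigr)$ via $\phi((0^r),\bar p)=2$ is also correct.

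The problem is part (i). Your argument rests entirely on the claim that, for a boundary point $\bar q_1$ in the torus orbit labelled by an ordered set partition $(B_1,\dots,B_m)$, one has $\bar q_1\in\overline F_s^{(0^r)}$ exactly when $s$ maps each level set of $a\mapsto x_a$ inside each block to itself. You present this as the output of ``an orbit-closure argument'' and then concede in your closing paragraph that verifying it uniformly over all torus orbits of $X(A_{n-1})$ is the main obstacle. That verification is the entire mathematical content of (i), and it is not supplied: as written, the proposal identifies the (correct) answer $\Stab^0(q)=\mathfrak S_{\mathcal P_1\wedge\mathcal P_2}$ but does not prove it. Note that the paper never needs this explicit description. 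It uses only the characterization of trivial angle type by the relations $x_{s(i)}/x_i=1$ for $s(i)\neq i$ (these ratios are well-defined near any $s$-fixed point by \S\ref{s fixed open}) to check two formal properties: the cocycle identity $\frac{x_{t(s(i))}}{x_i}=\frac{x_{t(s(i))}}{x_{s(i)}}\cdot\frac{x_{s(i)}}{x_i}$ gives closure under composition, and the fact that the relations depend only on the cycle partition of $s$ shows that $\Stab^0(q)$ contains the full Young subgroup attached to each of its elements. These two properties already force $\Stab^0(q)$ to be a Young subgroup without ever identifying which one, which is all the lemma asserts. To complete your route you must either carry out the orbit-by-orbit verification or substitute this softer two-step argument.
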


\begin{proof}
(i)\; Let $\bar q_1$ be a image of $q_1$ under the composition
\[
 X(\Delta ^{(n)})
 \to X(A_{n-1})\to \mathbb P^{n-1}.
\]
From the description in \S\ref{s fixed open}, one sees that
$q_1$ is an $s$-fixed point of trivial angle type if and only if
its coordinate satisfies the relation
$\displaystyle \frac {x_{s(i)}}{x_i} = 1$
for all $i$ such that $s(i)\neq i$. If $q_1$ is also $t$-fixed point of
trivial angle typle, we have
$\displaystyle \frac{x_{t\circ s(i)}}{x_i} = \frac{x_{t(s(i))}}{x_{s(i)}}\frac {x_{s(i)}}{x_i}=1$,
and hence $\Stab ^0(q)$ is a subgroup of $\Stab (q)$. Also by the characterization above,
one sees that if $s\in \Stab ^0(q)$ has a cycle decomposition
\[
s = (i_1 \,\cdots \, i_{l_1})(i_{l_1+1}\, \cdots \, i_{l_2})\cdots
(i_{l_{r-1}+1}\, \cdots \, i_{l_r}),
\]
all the elements in the subgroup
\[
\mathfrak S\{i_1 \cdots i_{l_1}\} \times
\mathfrak S\{i_{l_1+1} \cdots i_{l_2}\}\times \dots \times
\mathfrak S\{i_{l_{r-1}+1} \cdots i_{l_r}\}
\]
is also contained in $\Stab ^0(q)$. It implies that $\Stab ^0(q)\subset \mathfrak S_n$ is
a Young subgroup.

\noindent (ii)\;
Assume that $s\in \Stab (q)\subset \mathfrak S_n$ has the partition type
$\lambda$ of length $r$. Then, by Lemma \ref{local eigenvalues},
$s\in \Stab ^0(q)$ if and only if
the multiplicity of $1$ in the eigenvalues of the action
of $s$ on $T_q\widetilde Z^{(n)}$ is $2r+1$.
As the eigenvalues are constant in a conjugacy class,
we know that $\Stab ^0(q)$ is a normal subgroup of $\Stab (q)$.
\end{proof}

\subsection{}
Let $q=(q_1,q_1)\in \widetilde Z^{(n)}=X(\Delta ^{(n)})\times \mathbb C^n$
and assume that $q_1\in F^0$.
Then, by Lemma \ref{local eigenvalues}, the action of an element $s\in \Stab ^0(q)$
on the tangent space $T_q\widetilde Z^{(n)}$ has eigenvalues
\begin{multline*}
\qquad \quad(\zeta_1,\dots, \zeta_1 ^{\lambda _1-1}; \; \dots\;  ;
\zeta _r, \dots ,\zeta _r ^{\lambda _r-1};\; \underbrace{1,\dots, 1}_{r+1}\; ; \\
1, \zeta_1,\dots, \zeta_1 ^{\lambda _1-1}; \; \dots\;  ;
1, \zeta _r, \dots ,\zeta _r ^{\lambda _r-1} \; )\qquad \quad
\end{multline*}
if $s$ has partition type $\lambda =(\lambda _1,\dots, \lambda _r)$.
In particular, the character of the representation $\Stab ^0(q)\to GL(T_q \widetilde Z^{(n)})$
agrees with the character of the permutation representation of
the Young subgroup $\Stab ^0(q)\subset \mathfrak S_n$ on $\mathbb C^n\oplus \mathbb C^n\oplus \mathbb C$,
the direct sum of two copies of a permutation representation and a trivial representation.
Let $U_q$ is a sufficiently small $\Stab (q)$-invariant open
neighborhood of $q\in \widetilde Z^{(n)}$.
If $\Stab ^0(q)$ is of the form $\mathfrak S_{\mu}$ for a partition
$\mu = (\mu _1,\dots, \mu _k)$, $V_q=U_q/\Stab ^0(q)$ is locally isomorphic to
a product of a neighborhood of a general cycle in $\Sym ^n(\mathbb C^2)$ of the form
\[
\sum _{i=1}^k \mu _i a_i\quad (a_i\in \mathbb C^2),
\]
and a complex line $\mathbb C$.
Therefore, it admits a crepant resolution $\widetilde V_q\subset \Hilb ^n(\mathbb C^2)\times \mathbb C$.
By a theorem of Haiman (\cite{H}, Theorem 5.1),
$\widetilde V_q$ can be regarded as an open subset of
$\mathfrak S_n\mbox{-}\Hilb (\mathbb C^{2n+1})$,
so the quotient group $G_q=\Stab (q)/\Stab ^0(q)$ naturally acts on $\widetilde V_q$
and the quotient $\widetilde V_q/G_q$ gives a partial resolution
\[
\psi ^{(n)}_{2,q} : \widetilde V_q/G_q \to \overline U_q
\]
of the image $\overline U_q$ of $U_q$ in $Z^{(n)}=\widetilde Z^{(n)}$.
Since $\overline U_q\subset Z^{(n)}$ has only canonical singularities,
$\psi ^{(n)}_{2,q}$ is again a crepant birational morphism.
As the partial resolution $\psi ^{(n)}_{2,q}$ naturally glue with
the Hilbert-Chow morpihsm
$\psi^{(n) \circ}: Y^{(n) \circ}\to Z^{(n) \circ}$ at the
image of $q$ in $Z^{(n)}$ for every $q=(q_1,q_2)$ with $q_1\in F^0$,
we get an extended crepant partial resolution
$\psi ^{(n)}:Y^{(n)}\to Z^{(n)}$.

\subsection{}
To finish the proof of Theorem \ref{minimal model},
we check that $\psi ^{(n)}$ constructed above satisfies the conditions (i) and (iii).

Let $D_1,\dots ,D_k\subset X(\Delta ^{(n)})$ be torus invariant prime divisors.
At a point $q\in F^0$, they are defined by $y_j=0$ (not all but for some $j$'s)
in the notation of \S\ref{s fixed open}.
Lemma \ref{local eigenvalues} implies that the intersection $D_1\cap \dots \cap D_k$
is (if not empty) transversal to the action of $\mathfrak S_n$.
Therefore, the strict transform $D'_i$ of the image of $D_i$ in $\widetilde V_q$
is smooth divisor and intersecting transversally along the exceptional divisor.
As the quotient group $G_q=\Stab (q)/\Stab ^0(q)$ acts on the coordinate
$(y_1,\dots, y_r)$ via a torus $(\mathbb C^*)^r$,
the normal crossing divisor $\sum D'_i$ is preserved by the action of $G_q$.
Therefore, the singular fiber of $Y^{(n)}\to B$ is a divisor with
$V$-normal crossings. This proves the condition (iii).

The condition (i) is a consequences of the characterization of terminal quotient singularity
(see, \cite{MS} Theorem 2.3).
Let $\tilde \Gamma$ be a unique smooth irreducible divisor on $\widetilde Z^{(n)}$ dominating
$F^{(0^{n-1})}_{(1\; 2)}\subset X(\Delta ^{(n)})$ and $\Gamma$ the image of $\tilde \Gamma$
in $Z^{(n)}$. Lemma \ref{value of age} implies that the age function always satisfies
$a(s;\theta, \bar p)\geqslant n-r$ if $s$ has the partition type of length $r$
(regardless of a choice of primitive root of unity).
In particular, if $a(s;\theta, \bar p)=1$, we necessarily has $r=n-1$,
therefore the associated partition should be $\lambda =(2,1^{n-1})$. Moreover, in that case,
one need to have $\theta _1=\dots =\theta _r$ and $\theta _r=0$, namely $\theta =(0^{n-1})$.
This implies that an exceptional divisor of discrepancy $0$ over $Z^{(n)}$ necessarily dominates $\Gamma$
(see \cite{Rei} Remark (3.2)).
On the other hand, as $\psi ^{(n)}$ is a crepant resolution of
the singularity of $Z^{(n)}$ at the generic point
of $\Gamma$, $Y^{(n)}$ has no crepant exceptional divisor over it. This implies (i)
and completes the proof of Theorem \ref{minimal model}.

\subsection{}
The recent construction of degeneration of Hilbert schemes by Gulbrandsen, Halle, and Hulek
\cite{GHH} seems to be strongly related to the problem.
In this paragraph, we use the notation of \cite{GHH} freely.
We can show that, for the expanded degeneration $X[n]\to \mathbb A^{n+1}$, there is a natural isomorphism
between the GIT quotient of
the stable locus of the relative symmetric product $\Sym ^n(X[n]/\mathbb A^{n+1})^s$
by $G[n]=(\mathbb C^*)^n$ and our $Z^{(n)}$:
\[
\varepsilon ^{(n)}:\Sym ^n(X[n]/\mathbb A^{n+1})^s \git G[n] \overset{\sim}{\lto} Z^{(n)}.
\]
Therefore, the relative Hilbert--Chow morphism
\[
\Hilb ^n(X[n]/\mathbb A^{n+1})^s\to \Sym ^n(X[n]/\mathbb A^{n+1})^s
\]
gives a birational morphism
\[
\psi ^{(n),GHH}: I^n(X/\mathbb A^1)=\Hilb ^n(X[n]/\mathbb A^{n+1})^s \git G[n] \to Z^{(n)}
\]
over the base $B=\mathbb A^1$. As the authors claim in \cite{GHH} that
$I^n(X/\mathbb A^1)$ has only (ablian) quotient singularities and
has trivial canonical bundle, it will immediately follow that $\psi ^{(n),GHH}$ is an extension of
$\psi ^{(n)\circ}$ satisfying the conditions (i)--(iii) of Theorem \ref{minimal model}.
We will discuss the construction of $\varepsilon ^{(n)}$ and the comparison of
$\psi ^{(n),GHH}$ and our $\psi ^{(n)}$ in a forthcoming article \cite{Nf}.

\subsection{}
The theorem asserts that $Y^{(n)}$ is a \emph{relatively minimal model} of the symmetric product $X^{(n)}$ over $B$,
as $K_{Y^{(n)}}$ is numerically trivial over $X^{(n)}$ and $K_{X^{(n)}}\equiv 0$.
The general theory of MMP also suggests that there may be other minimal models of the symmetric product.
Actually, if $n=2$ or $3$,
we can prove that the relative Hilbert scheme $\Hilb ^n (S_2/B)$ is irreducible and
admits a small resolution
\[
H^{(n)}\to \Hilb ^n (S_2/B)\to X^{(n)}
\]
such that the natural map $H^{(n)}\to B$ is \emph{semistable} (see \cite{N}, Theorem 4.3, for the case $n=2$).
Moreover, for $n=2$, we can explicitly write down the flop
$\xymatrix{H^{(2)}\ar@{..>}[r] &Y^{(2)}}$ as follows.

The singular fiber of $p_2:S_2\to B$ consists of two components $S_{2,1}=(x_1=0)$ and $S_{2,2}=(x_2=0)$.
Let $C= S_{2,1}\cap S_{2,2}\cong \mathbb A^1$ be the double line.
The fiber of the relative Hilbert scheme $\Hilb ^2(S_2/B)\to B$ consists of three components:
\[
\Hilb ^2(S_{2,1}),\; \Hilb ^2(S_{2,2}),\; \mbox{and a component } \overline H_{12} \mbox{ birational to }S_0\times S_1.
\]
Non-trivial fiber occurs over a cycle $\gamma =2p\in X^{(2)}$. If the support of $\gamma$ lies in the
smooth locus of $p_2$, the fiber of $\Hilb ^2(S_2/B)\to X^{(2)}$ is $\mathbb P^1$.
Let us denote the associated cycle by $l_1$.
If the support is in the double curve $C$, the fiber is $\mathbb P^2$ and
we denote a class of line in this $\mathbb P^2$ by $l_2$.

The small resolution $h: H^{(2)}\to \Hilb ^2(S_2/B)$ is given by a blowing-up along the (non-Cariter) divisor
$\Hilb ^2(S_{2,1})$.

\begin{claim}
The singular fiber of $H^{(2)}\to B$ consists of
\[
H_{ii} = Bl _{\Hilb ^2(C)}(\Hilb ^2(S_{2,i}))\;(i=1,2),\mbox{\quad and \quad} H_{12}= Bl _{\Delta _C}(S_{2,1}\times S_{2,2}),
\]
where $\Delta _C$ is the diagonal of $C$ in $C\times C\subset S_{2,1}\times S_{2,2}$.
\end{claim}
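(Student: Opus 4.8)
The plan is to identify the three components of the singular fibre of $H^{(2)}$ with the proper transforms under $h$ of the three components $\Hilb^2(S_{2,1})$, $\Hilb^2(S_{2,2})$, $\overline H_{12}$ of the singular fibre of $\Hilb^2(S_2/B)\to B$, and then to compute each proper transform by a local analysis modelled on the $n=2$ case of Proposition \ref{tilde small}. First I would use that $h$ is small: since it contracts no divisor, every irreducible component of the singular fibre of $H^{(2)}$ is the proper transform of one of the three components above, so there are exactly three and it suffices to identify them. Moreover $h$ is an isomorphism away from the locus where the Weil divisor $\Hilb^2(S_{2,1})$ is non-Cartier, which is contained in the singular locus of $\Hilb^2(S_2/B)$ and is supported on the set of length-two subschemes whose support lies in the double curve $C$; hence each identification is a local question along this locus.

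Next I would treat the generic point of this locus, namely a reduced subscheme $\{c,c'\}$ with $c\ne c'$ two distinct points of $C$. As the two points are distinguishable, $\Hilb^2(S_2/B)$ is \'etale-locally the ordered fibre product $\widetilde X^{(2)}=\widetilde X^{(2)\prime}\times\mathbb C^2$, where $\widetilde X^{(2)\prime}=\{z_{11}z_{12}=z_{21}z_{22}\}$ is the three-fold node of \eqref{d2eq}; under this identification $\Hilb^2(S_{2,1})$ becomes the Weil divisor $\Sigma^{(2)}=\{z_{11}=z_{21}=0\}$, so that $h$ is \'etale-locally the small resolution $\widetilde Z^{(2)\prime}\to\widetilde X^{(2)\prime}$ of Proposition \ref{tilde small} (blow-up commutes with \'etale localization). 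Reading the two affine charts of this blow-up then gives the two straightforward identifications together with the generic behaviour of $\overline H_{12}$: the exceptional divisor of $h$, which is the proper transform of $\Hilb^2(S_{2,1})$ and maps birationally onto it, is obtained from $\Hilb^2(S_{2,1})$ by blowing up the locus $\Hilb^2(C)=\Sym^2(C)$ along which it is non-Cartier, so $H_{11}=Bl_{\Hilb^2(C)}\Hilb^2(S_{2,1})$; symmetrically $H_{22}=Bl_{\Hilb^2(C)}\Hilb^2(S_{2,2})$; and the two local branches of $\overline H_{12}$ through $\{c,c'\}$ (the subscheme with one point on $S_{2,1}$ near $c$ and one on $S_{2,2}$ near $c'$, and the branch with the sheets swapped) are carried to the two ends of the exceptional $\mathbb P^1$. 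Thus over $C\times C$ minus its diagonal, $h$ merely separates the two sheets of $\overline H_{12}$, i.e. it realises the normalization $S_{2,1}\times S_{2,2}\to\overline H_{12}$.

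The essential and hardest step is the behaviour over the diagonal $\Delta_C$, i.e. over length-two subschemes supported at a single node, where the Hilbert--Chow fibre is the $\mathbb P^2$ carrying the class $l_2$ and the local geometry is strictly more degenerate than a product of the three-fold node with a smooth factor; here the extra blow-up of $\Delta_C$ asserted in the Claim is invisible in the node chart and must be produced by a direct computation. For this I would use the presentation $\Hilb^2(S_2/B)=Bl_{\Delta}X^{(2)}$ of the relative Hilbert scheme as the blow-up of the relative symmetric product along the relative diagonal (valid here since the total space $S_2$ is smooth), together with the symmetric and antisymmetric coordinates $z_{1j}\pm z_{2j}$ centred at a node, and show that $h$ restricted to the proper transform of $\overline H_{12}$ introduces precisely the blow-up of $\Delta_C\subset C\times C\subset S_{2,1}\times S_{2,2}$ — its exceptional $\mathbb P^2$-bundle over $C$ recovering exactly the Hilbert--Chow $\mathbb P^2$'s — yielding $H_{12}=Bl_{\Delta_C}(S_{2,1}\times S_{2,2})$, while checking that the descriptions of $H_{11}$ and $H_{22}$ as blow-ups of $\Hilb^2(C)$ persist over this stratum.

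I expect this single-node computation to be the main obstacle, precisely because it is where the local model ceases to be a product of a node with a smooth factor and where the normalization of $\overline H_{12}$ found generically must be reconciled with the additional blow-up of $\Delta_C$ that appears only over the diagonal. Once the local descriptions are patched, each of $H_{11}$, $H_{22}$, $H_{12}$ is a blow-up of a smooth variety along a smooth centre, hence smooth, and the three meet in normal crossing along the exceptional loci, in agreement with the semistability of $H^{(2)}\to B$.
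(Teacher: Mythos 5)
Your reduction to three strict transforms and your analysis over $\{c,c'\}$ with $c\neq c'$ on $C$ (via the threefold node $z_{11}z_{12}=z_{21}z_{22}$ of \eqref{d2eq} and the $n=2$ case of Proposition \ref{tilde small}) are fine, but the proposal has a genuine gap precisely at the point you yourself flag as the main obstacle: the computation over $\Delta_C$, i.e.\ at length-two subschemes supported at a single point of the double curve, is only announced (``I would \dots show that \dots''), never carried out. That step is the entire content of the Claim --- it is where the center $\Delta_C$ of the blow-up in $H_{12}$ and the $\mathbb P^2$'s carrying $l_2$ actually appear, and where one must also check that the descriptions $H_{ii}=Bl_{\Hilb^2(C)}(\Hilb^2(S_{2,i}))$ persist. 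In addition, the identification $\Hilb^2(S_2/B)=Bl_{\Delta}X^{(2)}$ you propose to compute with is asserted, not proved: $X^{(2)}$ is singular along the relevant locus, so the standard statement for $\Hilb^2$ of a smooth surface does not apply verbatim to the relative/strict-transform situation, and coordinates on the quotient $X^{(2)}$ at a non-reduced cycle supported on $C$ are exactly where such a computation becomes delicate.

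The paper's proof sidesteps all of this by never descending to the symmetric quotient until the very end. It realizes $\Hilb^2(S_2/B)=W/\mathfrak S_2$, where $W$ is the strict transform of $(p_2\times p_2)^{-1}\Delta_B=S_2\times_B S_2$ in $Bl_D(S_2\times S_2)$ and $D$ is the diagonal of the \emph{smooth} sixfold $S_2\times S_2$. The four components of the central fiber of $W\to B$ are then strict transforms of the smooth fourfolds $S_{2,i}\times S_{2,j}$ under a blow-up with smooth center, so $W_{ii}=Bl_{\Delta_{S_{2,i}}}(S_{2,i}\times S_{2,i})$ and $W_{12}=Bl_{\Delta_C}(S_{2,1}\times S_{2,2})$ require no local coordinates at all; the small resolution $h$ is realized upstairs as $\widetilde W=Bl_{W_{11}}W$, and the Claim follows because $H_{ii}\cong\widetilde W_{ii}/\mathfrak S_2=Bl_{\Hilb^2(C)}(\Hilb^2(S_{2,i}))$ while $\widetilde W_{12}\cong W_{12}$, the point being that $W_{12}\cap W_{11}$ is already a Cartier divisor on the smooth $W_{12}$, so the further blow-up does not modify it. If you want to keep your route, the cleanest repair is to import exactly this device: replace the single-node coordinate computation by the observation that every object in the Claim is a strict transform inside $Bl_D(S_2\times S_2)$, taken equivariantly before passing to the $\mathfrak S_2$-quotient.
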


\begin{proof}
Let $D$ be the diagonal of $S_2\times S_2$ and $W$ the strict transform of $(p_2\times p_2)^{-1}\Delta _B$
in $Bl_{D}(S_2\times S_2)$, where $\Delta _B\cong B$ is the diagonal of $B\times B$. Then, $\Hilb ^2(S_2/B)$ is nothing
but the quotient $W/\mathfrak S_2$. The fiber of $W\to \Delta _B\cong B$ over the origin $0\in B$ consists of
four components
\[
W_{ii} = Bl_{\Delta _{S_{2,i}}} (S_{2,i}\times S_{2,i}), \quad
W_{ij} = Bl_{\Delta _C} (S_{2,i}\times S_{2,j})
\]
with $i,j\in \{1,2\},\, i\neq j$. If we take $\widetilde W=Bl _{W_{11}} W=Bl _{W_{22}} W$, we have an isomorphism
$H^{(2)}\cong \widetilde W/\mathfrak S_2$.
Let us denote by $\widetilde W_{ii},\widetilde W_{ij}$ the strict transforms of $W_{ii}, W_{ij}$, respectively.
It immediately follows that the fiber of $H^{(2)}\to B$ over $0\in B$ consists
of
\[
\begin{aligned}
H_{ii}&\cong \widetilde W_{ii}/\mathfrak S_2=Bl _{\Hilb ^2(C)}(\Hilb ^2(S_{2,i}))
\mbox{\quad and}\\
H_{12}&\cong \widetilde W_{12}\cong W_{12}=Bl_{\Delta _C} (S_{2,1}\times S_{2,2}),
\end{aligned}
\]
noting that $W_{12}$ is smooth and $W_{12}\cap W_{11}$ is a Cartier divisor on $W_{12}$.
\end{proof}

The exceptional divisor $E$ of $Bl_{\Delta _C}(S_{2,1}\times S_{2,2})$ is isomorphic to $\mathbb P^2\times \Delta _C$
and the class of line on the fiber $\mathbb P^2$ is $l_2$.
Here, we remark that the morphism $h$ restricted to the strict transform $C\times C\subset H_{12}$ of
$C\times C\subset S_{2,1}\times S_{2,2}$ is the canonical morphism
\[
C\times C\to \Sym ^2(C),
\]
while $h_{|H_{12}}$ is birational.
\renewcommand{\thefootnote}{\ensuremath{\fnsymbol{footnote}}}
It in particular implies that the component $\overline {H}_{12}$ is \emph{non-normal}.%
\footnote{We remark that the description of $\overline H_{12}$ in \cite{N}, p.419 (appeard as `$Y_{i,i+1}$')
is erroneous.}
One also sees from the description given in the proof of the claim above
that $H_{11}\cap H_{22}$ is $\mathbb P^1$-bundle over $\Hilb ^2(C)=\Sym ^2(C)$:
\[
H_{11}\cap H_{22} = \mathbb P(N_{\Hilb ^2(C)/\Hilb ^2(S_{2,1})}),
\]
and $H_{11}\cap H_{22}\cap H_{12}$ is isomorphic to $C\times C$ that is embedded
in $H_{11}\cap H_{22}$ as a double section over $\Sym ^2(C)$. All the exceptional fibers of
$h$ are isomorphic to $\mathbb P^1$, whose numerical class we denote by $l_3$.
The fiber $\mathbb P^1$ of $H_{11}\cap H_{22}\to \Sym ^2(C)$ is numerically equivalent to $l_3$.

Now, the relative cone of curves $NE(H^{(2)}/X^{(2)})$ is spanned by $l_1,\, l_2,$ and $l_3$.
An easy calculation shows that
\[
H_{12}\cdot l_1=0,\quad H_{12}\cdot l_2=-2,\quad \mbox{and\quad} H_{12}\cdot l_3=2.
\]
As the canonical bundle of $H^{(2)}$ is trivial by \cite{N}, Theorem 4.3,
$(H^{(n)},\varepsilon H_{12})$ is klt for a sufficiently small positive rational number $\varepsilon$, and
Cone Theorem guarantees that
there is an extremal contraction of $l_2$, which is a small contraction that contracts $E$.
One sees that $Y^{(2)}$ is nothing but its flop. Actually, the flop produces family of $\mathbb P^1$ over
$\Delta _C$ passing through a $\frac 12(1,1,1,1)$-singularity coming from the fixed point locus
with the angle type $\theta =(1/2)$. This is a locally trivial family of toric flop that is called ``Francia flop'' in
\cite{Ka} (Example 5.1 and Definition 4.1).

%%%%%%%%%%%%%%%%%%%%%%%%%%%

\begin{bibdiv}
\begin{biblist}

\bib{Ba}{article}{
   author={Batyrev, Victor V.},
   title={Non-Archimedean integrals and stringy Euler numbers of
   log-terminal pairs},
   journal={J. Eur. Math. Soc. (JEMS)},
   volume={1},
   date={1999},
   number={1},
   pages={5--33},
   issn={1435-9855},
%   review={\MR{1677693}},
   doi={10.1007/PL00011158},
}

\bib{Be}{article}{
   author={Beauville, Arnaud},
   title={Vari\'et\'es K\"ahleriennes dont la premi\`ere classe de Chern est
   nulle},
   language={French},
   journal={J. Differential Geom.},
   volume={18},
   date={1983},
   number={4},
   pages={755--782 (1984)},
   issn={0022-040X},
%   review={\MR{730926 (86c:32030)}},
}

\bib{BCHM}{article}{
   author={Birkar, Caucher},
   author={Cascini, Paolo},
   author={Hacon, Christopher D.},
   author={McKernan, James},
   title={Existence of minimal models for varieties of log general type},
   journal={J. Amer. Math. Soc.},
   volume={23},
   date={2010},
   number={2},
   pages={405--468},
   issn={0894-0347},
   review={\MR{2601039}},
   doi={10.1090/S0894-0347-09-00649-3},
}

\bib{Br}{article}{
   author={Brian{\c{c}}on, Jo{\"e}l},
   title={Description de ${\rm Hilb}^{n}\mathbf C\{x,y\}$},
   journal={Invent. Math.},
   volume={41},
   date={1977},
   number={1},
   pages={45--89},
   issn={0020-9910},
%   review={\MR{0457432 (56 \#15637)}},
}

\bib{CLS}{book}{
   author={Cox, David A.},
   author={Little, John B.},
   author={Schenck, Henry K.},
   title={Toric varieties},
   series={Graduate Studies in Mathematics},
   volume={124},
   publisher={American Mathematical Society, Providence, RI},
   date={2011},
   pages={xxiv+841},
   isbn={978-0-8218-4819-7},
%   review={\MR{2810322 (2012g:14094)}},
}

\bib{D}{book}{
   author={Dolgachev, Igor V.},
   title={Classical algebraic geometry},
   note={A modern view},
   publisher={Cambridge University Press, Cambridge},
   date={2012},
   pages={xii+639},
   isbn={978-1-107-01765-8},
%   review={\MR{2964027}},
   doi={10.1017/CBO9781139084437},
}

\bib{DL}{article}{
   author={Dolgachev, Igor},
   author={Lunts, Valery},
   title={A character formula for the representation of a Weyl group in the
   cohomology of the associated toric variety},
   journal={J. Algebra},
   volume={168},
   date={1994},
   number={3},
   pages={741--772},
   issn={0021-8693},
%   review={\MR{1293622 (95h:20008)}},
   doi={10.1006/jabr.1994.1251},
}

\bib{F}{article}{
   author={Fogarty, John},
   title={Algebraic families on an algebraic surface},
   journal={Amer. J. Math},
   volume={90},
   date={1968},
   pages={511--521},
   issn={0002-9327},
%   review={\MR{0237496 (38 \#5778)}},
}

\bib{GHH}{article}{
   author={Gulbrandsen, Martin G.},
   author={Halle, Lars H.},
   author={Hulek, Klaus},
   title={A GIT construction of degenerations of Hilbert schemes of points},
   journal={preprint arXiv:1604.00215},
   date={2016},
}

\bib{H}{article}{
   author={Haiman, Mark},
   title={Hilbert schemes, polygraphs and the Macdonald positivity
   conjecture},
   journal={J. Amer. Math. Soc.},
   volume={14},
   date={2001},
   number={4},
   pages={941--1006},
   issn={0894-0347},
   review={\MR{1839919}},
   doi={10.1090/S0894-0347-01-00373-3},
}

\bib{Ka}{article}{
   author={Kawamata, Yujiro},
   title={Francia's flip and derived categories},
   conference={
      title={Algebraic geometry},
   },
   book={
      publisher={de Gruyter, Berlin},
   },
   date={2002},
   pages={197--215},
%   review={\MR{1954065}},
}

\bib{KM}{book}{
   author={Koll{\'a}r, J{\'a}nos},
   author={Mori, Shigefumi},
   title={Birational geometry of algebraic varieties},
   series={Cambridge Tracts in Mathematics},
   volume={134},
   note={With the collaboration of C. H. Clemens and A. Corti;
   Translated from the 1998 Japanese original},
   publisher={Cambridge University Press},
   place={Cambridge},
   date={1998},
   pages={viii+254},
   isbn={0-521-63277-3},
%   review={\MR{1658959 (2000b:14018)}},
   doi={10.1017/CBO9780511662560},
}

\bib{MS}{article}{
   author={Morrison, David R.},
   author={Stevens, Glenn},
   title={Terminal quotient singularities in dimensions three and four},
   journal={Proc. Amer. Math. Soc.},
   volume={90},
   date={1984},
   number={1},
   pages={15--20},
   issn={0002-9939},
   review={\MR{722406}},
   doi={10.2307/2044659},
}

\bib{N}{article}{
   author={Nagai, Yasunari},
   title={On monodromies of a degeneration of irreducible symplectic
   K\"ahler manifolds},
   journal={Math. Z.},
   volume={258},
   date={2008},
   number={2},
   pages={407--426},
   issn={0025-5874},
%   review={\MR{2357645 (2008k:32071)}},
   doi={10.1007/s00209-007-0179-3},
}

\bib{Nf}{article}{
   author={Nagai, Yasunari},
   title={Gulbrandsen--Halle--Hulek degeneration and Hilbert--Chow morphism},
   journal={preprint},
   date={2017}
}

\bib{Nak}{book}{
   author={Nakajima, Hiraku},
   title={Lectures on Hilbert schemes of points on surfaces},
   series={University Lecture Series},
   volume={18},
   publisher={American Mathematical Society, Providence, RI},
   date={1999},
   pages={xii+132},
   isbn={0-8218-1956-9},
%   review={\MR{1711344}},
   doi={10.1090/ulect/018},
}

\bib{P}{article}{
   author={Procesi, C.},
   title={The toric variety associated to Weyl chambers},
   conference={
      title={Mots},
   },
   book={
      series={Lang. Raison. Calc.},
      publisher={Herm\`es, Paris},
   },
   date={1990},
   pages={153--161},
   review={\MR{1252661 (94k:14045)}},
}

\bib{R}{article}{
   author={Ran, Ziv},
   title={Cycle map on Hilbert schemes of nodal curves},
   conference={
      title={Projective varieties with unexpected properties},
   },
   book={
      publisher={Walter de Gruyter GmbH \& Co. KG, Berlin},
   },
   date={2005},
   pages={361--378},
%   review={\MR{2202264 (2007b:14005)}},
}

\bib{Rei}{article}{
   author={Reid, Miles},
   title={Canonical $3$-folds},
   conference={
      title={Journ\'ees de G\'eometrie Alg\'ebrique d'Angers, Juillet
      1979/Algebraic Geometry, Angers, 1979},
   },
   book={
      publisher={Sijthoff \&\ Noordhoff, Alphen aan den Rijn---Germantown, Md.},
   },
   date={1980},
   pages={273--310},
   review={\MR{605348}},
}

\bib{Sa}{book}{
   author={Sagan, Bruce E.},
   title={The symmetric group},
   series={Graduate Texts in Mathematics},
   volume={203},
   edition={2},
   note={Representations, combinatorial algorithms, and symmetric
   functions},
   publisher={Springer-Verlag, New York},
   date={2001},
   pages={xvi+238},
   isbn={0-387-95067-2},
%   review={\MR{1824028}},
   doi={10.1007/978-1-4757-6804-6},
}

\bib{S}{article}{
   author={Steenbrink, J. H. M.},
   title={Mixed Hodge structure on the vanishing cohomology},
   conference={
      title={Real and complex singularities},
      address={Proc. Ninth Nordic Summer School/NAVF Sympos. Math., Oslo},
      date={1976},
   },
   book={
      publisher={Sijthoff and Noordhoff, Alphen aan den Rijn},
   },
   date={1977},
   pages={525--563},
%   review={\MR{0485870 (58 \#5670)}},
}

\bib{Stem}{article}{
   author={Stembridge, John R.},
   title={Some permutation representations of Weyl groups associated with
   the cohomology of toric varieties},
   journal={Adv. Math.},
   volume={106},
   date={1994},
   number={2},
   pages={244--301},
   issn={0001-8708},
%   review={\MR{1279220 (95f:20011)}},
   doi={10.1006/aima.1994.1058},
}

\bib{W}{article}{
  author={Wang, Jie},
  title={Degenerations of symmetric products of curves},
  journal={preprint},
%  url={http://math.uga.edu/~jiewang/Jies_Website/Research_files/7degenerationsofsym.pdf},
}

\bib{Y}{article}{
   author={Yasuda, Takehiko},
   title={Motivic integration over Deligne-Mumford stacks},
   journal={Adv. Math.},
   volume={207},
   date={2006},
   number={2},
   pages={707--761},
   issn={0001-8708},
%   review={\MR{2271984}},
   doi={10.1016/j.aim.2006.01.004},
}

\end{biblist}
\end{bibdiv}

\end{document}